\newtheorem{theorem}{Theorem}[subsection]
\newtheorem{question}[theorem]{Question}
\newtheorem{cor}[theorem]{Corollary}
\newtheorem{lemma}[theorem]{Lemma}
\newtheorem{proposition}[theorem]{Proposition}
\newtheorem{subtheorem}{Theorem}[subsubsection]
\newtheorem{subcorollary}[subtheorem]{Corollary}
\theoremstyle{remark}
\newtheorem{remark}[theorem]{Remark}
\newtheorem{example}[theorem]{Example}
\theoremstyle{definition}
\newtheorem{definition}[theorem]{Definition}
\newcommand{\quash}[1]{}
\newcommand{\mb}[1]{\mathbb{#1}}
\newcommand{\N}{\mb{N}}
\newcommand{\Q}{\mb{Q}}
\newcommand{\C}{\mb{C}}
\newcommand{\R}{\mb{R}}
\def\R{\mathbb{R}}
\def\Q{\mathbb{Q}}
\def\be{\begin{equation}}
\def\ee{\end{equation}}
\def\arrowdown#1#2{\Big\downarrow \rlap{$\vcenter{\hbox{$\scriptstyle#2$}}$}
{\hbox to -10pt{\hss{$\vcenter{\hbox{$\scriptstyle#1$}}$}}}}
\def\arrowup#1#2{\Big\uparrow \rlap{$\vcenter{\hbox{$\scriptstyle#2$}}$}
{\hbox to -10pt{\hss{$\vcenter{\hbox{$\scriptstyle#1$}}$}}}}
 \numberwithin{equation}{section}
\begin{document}

\title{On the slopes of the lattice of sections of hermitian line bundles}

\author[Chinburg]{T. Chinburg}
\address{T. Chinburg, Dept. of Mathematics\\ Univ. of Pennsylvania \\ Philadelphia, PA 19104, USA}
\email{ted@math.upenn.edu}
\thanks{T. C. was partially supported by  NSF FRG Grant No.\ DMS-1360767, NSF FRG Grant No.\ DMS-1265290,
NSF SaTC grant No. CNS-1513671  and Simons Foundation Grant No.\ 338379.}

\author[Guignard]{Q. Guignard}
\address{Q. Guignard\\I.H.E.S.\\
35, Route de Chartres\\
F-91440 Bures-sur-Yvette, France}
\email{guignard@ihes.fr}
\thanks{Q. G. was partially supported by the \'Ecole Normale Sup\'erieure and the I.H.E.S.}

\author[Soul\'e]{C. Soul\'e}
\address{C. Soul\'e\\I.H.E.S.\\
35, Route de Chartres\\
F-91440 Bures-sur-Yvette, France}
\email{soule@ihes.fr}
\thanks{C.S. was partially supported by the C.N.R.S..}

\subjclass[2010]{14G40, 14G35, 11F11}

 \date{\today}

\maketitle

\begin{abstract}In this paper we apply Arakelov theory to study the distribution of the Petersson norms  of classical cusp forms as well
as the distribution of  the sup norms of rational functions on adelic subsets of curves.  The method in both cases is to study the limiting distribution of the 
successive minima of norms of global sections of powers of a metrized ample line bundle as one takes increasing powers of the bundle.  We develop a general method for computing the measure associated to this distribution.  We also study measures associated to the  zeros of sections which have small norm.  
\end{abstract}

\tableofcontents

\section{Introduction}
\label{s:intro}
The development of Arakelov theory has benefited from a close study of
applications to classical questions.  The proofs of the conjectures of Mordell and Lang are 
famous examples.  We study in this paper  the distribution of norms of two kinds of
classical objects.   The first consists of the Petersson norms of modular forms with integral Fourier coefficients and increasing weight for $\mathrm{SL}_2(\mathbb{Z})$.  The second consists of the distribution of sup norms of 
polynomials with integer coefficients on compact subsets of the complex plane.  More generally, we consider  the
sup norms of rational functions with prescribed poles on adelic subsets of curves over number fields. These subjects are linked by the fact that they both concern the successive minima of the norms of global sections of increasing powers of metrized line bundles on arithmetic surfaces.    We treat both subjects in this paper because there is a substantial overlap in the underlying theory needed to study them.  

Finding successive minima of norms of global sections of powers of metrized line bundles has a long history  in Arakelov theory.  The arithmetic Hilbert-Samuel theorem (\cite{GS-Comptes}, \cite{AbbesBouche}) concerns the existence of sections with small norm.  In \cite{Chen}, Chen 
developed a theory of convergence for distributions associated to the successive minima of sequences of lattices.  He applied this theory
to show the existence of limiting distributions associated to the successive minima of norms of sections of increasing powers of line bundles
with smooth metrics on arithmetic varieties.  
For our applications  we need to work with some
particular metrics which are not smooth, using work on such metrics developed by Bost \cite{BostLetter} and K\"uhn \cite{Kuhn}.
In the case of Petersson norms of cusp forms, this leads to a new phenomenon not appearing in the work of Chen.  Namely, the limiting distribution associated to the successive minima of norms as the weight of the cusp forms increases does not have compact support.  

One consequence of our results has to do with congruences between modular forms.
We show that most of the small successive minima of the Petersson norms of cusp forms with integral Fourier coefficients arise from non-trivial congruences between Hecke eigenforms.  To see why congruences lead to small Petersson norms, 
suppose $f_1$ and $f_2$ are distinct normalized Hecke eigencuspforms, so that the first coefficient in each of their Fourier expansions at infinity
is $1$.  A non-trivial congruence  between these forms amounts to the statement that $g = (f_1 - f_2)/m$ has integral Fourier coefficients for
some integer $m > 1$.  In this case, $g$ will often have smaller Petersson norm than either $f_1$ or $f_2$.  More general congruences
involving several eigenforms are involved in the precise statements of our results in Definition \ref{def:nocongruence} and Theorem \ref{thm:succ}(iii).

Classical arithmetic capacity theory was motivated by 
the problem of finding whether there is a non-zero polynomial with integer coefficients which has sup norm less than one on a given subset of the complex plane.   The generalization of this problem to arbitrary curves involves studying global sections of powers of lines bundles which have particular Green's metrics.  Classical capacity theory produces an upper bound for the minimal such sup norm
which is not sharp in general.  We develop in this paper an approach via local Chebyshev constants  for obtaining better bounds over schemes of arbitrary dimension, and we obtain additional information on successive minima.
This leads to new results about classical questions. 

For instance, suppose $E$ is a compact subset of the complex plane which is
invariant under complex conjugation.  Let $m(n,E)$ be the minimal sup norm over $E$ of a non-zero polynomial with integer coefficients and 
degree $n$.  Since $m(\ell + n,E) \le m(\ell,E) \cdot m(n,E)$, the classical Fekete Lemma \cite[p. 10]{Chen} shows
$M(E) = \lim_{n \to \infty} m(n,E)^{1/n}$ exists.  Classical capacity theory as in \cite{Rumely1, Rumely2} shows that if the capacity $\gamma(E)$
of $E$ satisfies $\gamma(E) < 1$ then $M(E) < 1$.   Our work on local Chebyshev constants provides more precise information about $M(E)$.
As an example, suppose $E$ is the closed disk of radius $1/2$ centered at $1/2$. Then $\gamma(E) = 1/2$, and we will use the
machinery of \S \ref{s:Chebyshev} to show $0.64 < M(E) < 0.67$ (see Example \ref{ex:quarterdisc}).

The Chebyshev method is useful for showing that in some cases, the successive minima are almost all equal.  In this case, one says  the associated metrized bundles are asymptotically semi-stable,
and the limiting measure associated to successive minima is the Dirac measure supported on $0$. We will show that this situation arises from   adelic subsets of curves which have capacity one.  Motivated by work of Serre on the distribution of eigenvalues of Frobenius on abelian varieties, we will also study the distribution of zeros of sections of small norm with respect to
capacity theoretic metrics.  We will show that in the case of adelic sets of capacity one, one can find sections
of approximately minimal norm whose zeros tend toward the associated equilibrium distribution while avoiding any prescribed finite set of points.

A careful reader will notice that the classical questions we study involving Petersson norms of cusp
forms and the capacities of adelic sets lead to considering particular metrics on line bundles.
While some of our results could be generalized to other metrics, we prefer to focus
on the cases at hand.  Similarly, we focus on the Petersson norms of cusp forms for $\mathrm{SL}_2(\mathbb{Z})$ rather than on developing in this paper generalizations to arbitrary modular forms on reductive groups.  Such generalizations are naturally of interest.  However, in this paper we are concerned with demonstrating the possibility of obtaining explicit results.  For example, we will show that the limiting measure associated to Petersson inner products
of cusp forms for $\mathrm{SL}_2(\mathbb{Z})$ has support bounded above by $2 \pi + 6(1 - \mathrm{log}(12)) = -2.62625...$.  We hope a detailed analysis of
the $\mathrm{SL}_2(\mathbb{Z})$ case will motivate future research on more general modular forms.

\smallskip
This paper is organized in the following way. 

\smallskip
In \S \ref{s:basicobjects} we begin by recalling various kind of slopes associated to an hermitian adelic vector bundle over a number field.  The
example of primary interest is provided by the global sections of an ample metrized line bundle on an arithmetic variety.
The naive adelic slopes associated to such sections $s$ arise from a height $\lambda(s)$ recalled in Definition \ref{def:slopes}.  Here $\lambda(s)$  is
the negative of the natural logarithmic norm of $s$.  For this reason, the successive minima of norms of sections correspond to successive
maxima of heights.  We recall in \S \ref{s:basicobjects} 
 some results of Chen \cite{Chen} concerning various kinds of successive maxima of heights associated to the global sections of metrized line bundles.

In \S \ref{s:modular} we consider slopes associated to lattices of cusp forms $f$ of increasing weight for $\mathrm{SL}_2(\mathbb{Z})$
which have integral $q$-expansions.  
We begin by recalling work of K\"uhn and Bost concerning the interpretation of Petersson norms of such cusp forms via Arakelov theory.
When the g.c.d. of the Fourier coefficients is one, the height  $\lambda(f)$ of $f$ is simply one half 
the negative of the logarithm of the Petersson norm of $f$.  
A key issue is that the adelic metrics which arise on the line bundle $L$ appropriate to this application are singular at infinity. Thus
one cannot apply Chen's work directly.  Instead
we consider forms which vanish to at least prescribed orders at infinity, and then let these orders tend to $0$. An interesting
conclusion in our main result, Theorem \ref{thm:succ}, is that the the probability measure $\nu$ which results in limit of large weights
has support bounded above but not bounded below.  In Definition \ref{def:nocongruence} we define a nonzero cusp form $f $ to not arise from a congruence between Hecke eigenforms if when we write $f$ as a linear combination  $\sum_i c_i f_i$ of distinct normalized eigenforms $f_i$, the $c_i$ are algebraic integers divisible
in the ring of all algebraic integers by the g.c.d. of the Fourier coefficients of $f$.  We will show that  Petersson norms of such $f$ are very large and contribute a vanishingly small proportion of successive minima as the weight tends to infinity.
The measure $\nu$ thus has to do with non-trivial congruences between eigenforms which give rise to forms with integral
$q$ expansions having much smaller Petersson norms.  

In \S \ref{s:ArakelovSection}, we consider smooth projective curves $X$ of positive genus.  Building on work of one of us in \cite{MI}, we 
deduce an explicit upper bound on the largest minimum of $H^0 (X , L^{\otimes n})$  in terms of arithmetic intersection numbers. However, this result falls short of proving the asymptotic semi-stability of the metrics on $H^0 (X , L^{\otimes n})$.

In \S \ref{s:Chebyshev} we will apply the theory of Okunkov bodies to study successive maxima of heights  for $X$ of any dimension.  We introduce
local and global Chebyshev transforms which are maps from the  Okounkov body of $X$ to the real numbers. The global Chebyshev transform is the sum of the local ones. We prove in Corollary \ref{s:diracmeasureChebychev} that, if the global Chebyshev transform is a constant function, the limit distribution ${\nu}$ is a Dirac measure.   We compute explicitly the local Chebyshev transforms in some particular cases
when $X$ is a projective space.  The main strength
 of this technique is that in some cases one can compute explicitly the limit distributions
 of the successive maxima associated to heights.  

In \S \ref{s:SerreMeasures} we study the distribution of zeros of those sections of powers of a metrized line bundle 
which have at least a prescribed height, i.e. those sections whose norms are small in the corresponding way.
We begin with an example in \S \ref{s:AnExample} which suggests that  sections of ``small" norm may have to have at least some of their
zeros at particular points, the remaining zeros being variable.   To formulate this precisely we recall a result of
Serre concerning the decomposition in to atomic and diffuse parts of limits of measures in the weak topology
on the space of positive Radon measures.  The connection of this theory to zeros of cusp forms of small Petersson norms
is discussed in Remark \ref{rem:HS} and Question \ref{q:askit}.  

In \S \ref{s:Capacityone} we consider applications to adelic capacity theory. This has to do
with the possible sup norms of rational functions on adelic subsets of curves.  
  We will
 apply work of Rumely to show that in the case of capacity metrics associated to adelic
 sets of capacity one, the associated metrized bundles are asymptotically semi-stable,
and the measure $\nu$ is the Dirac measure supported at $0$.   We will
 also study the locations of the zeros of sections which arise in this case using the work
 in \S \ref{s:SerreMeasures}.

\smallskip

\subsection*{Acknowledgements}
T.\,C. would like to thank  the I.H.E.S. for support during 
the Fall of 2015. Q.G. took part in this project during the preparation of his Ph.D. thesis, and would like to thank I.H.E.S. and E.N.S for hospitality and support in that period.
%Add acknowledgements for Q.G. and C.S.

 \section{Semistability, successive maxima, slopes and prior results}
 \label{s:basicobjects}
 
% \subsection{Definitions and notations}
% 

% $K$ : a number field. \\
%$X$ : a variety over $K$, of dimension $d$.
%$v$ : a place of $K$, with $|p|_v = p^{-1}$ and $k_v = [K_v,\Q_v]$.\\
%$L$ : a line bundle on $X$.\\
%$(\mathcal{X},\mathcal{L})$ : an integral model of $(X,L)$ (if needed).\\
%$| \cdot |_{L,v}$ : a metric on $L$ at a place $v$.\\
%$r_n$ : the rank of $H^0(X,L^{\otimes n})$.\\
%$|| s ||_{L^{\otimes n},v}$ : the sup-norm of a section $s \in H^0(X,L^{\otimes n})$.\\
%$\lambda(s) = - \sum_v k_v \log || s ||_{L^{\otimes n},v}$ : the height of a section $s \in H^0(X,L^{\otimes n})$.\\
%$H^0(X,L^{\otimes n})^{\geq \lambda}$ : the set of sections with $\lambda(s) \geq \lambda$.\\
%$\lambda_{1,n} \geq \lambda_{2,n} \geq \dots \geq \lambda_{r_n,n}$ : the successive maxima of $H^0(X,L^{\otimes n})$. The quantity $\lambda_{i,n}$ is the largest $\lambda$ such that $H^0(X,L^{\otimes n})^{\geq \lambda}$ spans a $K$-vector space of dimension at least $i$.\\
%$\nu_n$ : the probability measure associated to the sequence $(\frac{1}{n} \lambda_{i,n})_i$.\\
%$\nu$ : the limit of the sequence $\nu_n$.
%$\mu_i = e^{-\lambda_i}$ : the $i$-th successive minimum (for $i>1$, this differs slightly from the usual definition; this does not affect the limit distribution).\\
%$ \hat{\lambda}_{1,n} \geq \hat{\lambda}_{2,n} \geq \dots \geq \hat{\lambda}_{r_n,n} $ : the Harder-Narasimhan slopes, with respect to some $L^2$-norm.\\
 
  \subsection{Measures associated to successive maxima \`a la Chen. }
 \label{s:ChenM}
 
Let $\overline{E} = (E,(||\cdot ||_{v})_v)$ be an hermitian adelic vector bundle of rank $r = \mathrm{rank(\overline{E})} > 0$ over a number field $K$ of degree $\delta$ over $\mathbb{Q}$ (see \cite{Gaudron}, Definition $3.1$). 

\begin{definition}
\label{def:slopes}
We consider three sequences of slopes for $\overline{E}$:
\begin{itemize}
\item[i.] The (unnormalized) Harder-Narasimhan-Grayson-Stuhler slopes $(\hat{\lambda_i})_{i=1}^{r} = (\hat{\lambda_i}(\overline{E}))_{i=1}^{r}$, as defined in \cite{Gaudron}, Definition $5.10$. One has
$$
\sum_{i=1}^r \hat{\lambda_i}(\overline{E}) = \widehat{\mathrm{deg}}(\overline{E}) =r \lambda(\overline{E}),
$$
where $\widehat{\mathrm{deg}}(\overline{E})$ is the adelic degree of $\overline{E}$  (\cite{Gaudron}, Definition $4.1$), and $\lambda(\overline{E}) = \frac{1}{r} \widehat{\mathrm{deg}}(\overline{E})$ is the slope of $\overline{E}$.
\item[ii.] The naive adelic successive maxima $(\lambda_i)_{i=1}^{r} = (\lambda_i(\overline{E}))_{i=1}^{r}$ of $\overline{E}$, where   $\lambda_i(\overline{E})$ is the largest real number $\lambda$ such that the set $\overline{E}^{\geq \lambda}$ of elements of $E$ satisfying
\begin{equation}
\label{eq:heightdef}
\lambda(s) := - \sum_v \delta_v \log || s ||_{v} \geq \lambda,
\end{equation}
generates a $K$-vector space of dimension at least $i$. Here, $\delta_v$ is defined as follows, for each valuation of $v$ of $K$.
When $v$ is finite of residual characteristic $p$, if $K_v$ is the completion of $K$ at $v$, $\delta_v$ is the degree
of $K_v$ over $\mathbb{Q}_p$. When $v$ is real $\delta_v=1$, and when $v$ is complex $\delta_v=2$.
\item[iii.] The adelic successive maxima $(\lambda_i')_{i=1}^{r} = (\lambda_i'(\overline{E}))_{i=1}^{r}$ of $\overline{E}$ (see \cite{Gaudron}, Definition $5.19$) : the number $\lambda_i'(\overline{E})$ is the supremum of the quantities $- \sum_v \delta_v \log r_v $, where $(r_v)_v$ ranges over all families of positive real numbers such that the set of elements $s \in E$ satisfying
$$
\forall v, \ \ ||s||_v \leq r_v,
$$
generates a $K$-vector space of dimension at least $i$.
\end{itemize}
\end{definition}

By \cite{Gaudron}, Theorem $5.20$, one has 
$$
\sum_{i=1}^r \lambda_i'(\overline{E}) = \widehat{\mathrm{deg}}(\overline{E}) + \mathcal{O}_K(r \log(2r)).
$$
Since the same holds for the slopes $(\hat{\lambda_i}(\overline{E}))_{i=1}^{r}$, the inequalities $\hat{\lambda_i}(\overline{E}) \geq \lambda_i(\overline{E}) \geq \lambda_i'(\overline{E})$ ensure that the same estimate also holds for the slopes $(\lambda_i(\overline{E}))_{i=1}^{r}$. From this one deduce the following :

\begin{proposition}
\label{prop:hermitianadelic} Let $(\overline{E}_n)_{n \geq 1}$ be a sequence of hermitian adelic vector bundles of ranks $(r_n)_{n \geq 1}$ over $K$, such that $\log r_n = o(n)$. Assume that the sequence of probability measures
$$
\hat{\nu}_{\overline{E}_n} = \frac{1}{r_n} \sum_{i=1}^{r_n} \delta_{\frac{1}{n}\hat{\lambda_i}(\overline{E}_n)}
$$
weakly converges to some probability measure $\nu$ with compact support on $\R$. Then the sequence 
$$
\nu_{\overline{E}_n} = \frac{1}{r_n} \sum_{i=1}^{r_n} \delta_{\frac{1}{n}\lambda_i(\overline{E}_n)},
$$
weakly converges to $\nu$.
\end{proposition}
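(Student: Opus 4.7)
\emph{Proof proposal.} The plan is to reduce the weak convergence of $\nu_{\overline{E}_n}$ to that of $\hat{\nu}_{\overline{E}_n}$ by showing that the two empirical measures become indistinguishable against any bounded Lipschitz test function, using the sandwich between $\hat{\lambda_i}$ and $\lambda_i'$ recalled just before the statement.

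First I would combine the pointwise inequalities $\hat{\lambda_i}(\overline{E}_n) \geq \lambda_i(\overline{E}_n) \geq \lambda_i'(\overline{E}_n)$ with the identity $\sum_{i=1}^{r_n} \hat{\lambda_i}(\overline{E}_n) = \widehat{\mathrm{deg}}(\overline{E}_n)$ and the estimate $\sum_{i=1}^{r_n} \lambda_i'(\overline{E}_n) = \widehat{\mathrm{deg}}(\overline{E}_n) + \mathcal{O}_K(r_n \log(2r_n))$ recalled in the text. Taking differences forces the non-negative quantity $\sum_{i=1}^{r_n} (\hat{\lambda_i}(\overline{E}_n) - \lambda_i(\overline{E}_n))$ to be bounded by $C_K\, r_n \log(2 r_n)$ for some constant $C_K$ depending only on $K$.

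Next, for any bounded Lipschitz function $f : \R \to \R$ of Lipschitz constant $L$, pairing the $i$-th atom of $\hat{\nu}_{\overline{E}_n}$ with the $i$-th atom of $\nu_{\overline{E}_n}$ yields
$$
\Big| \int f \, d\hat{\nu}_{\overline{E}_n} - \int f \, d\nu_{\overline{E}_n} \Big| \leq \frac{L}{r_n} \sum_{i=1}^{r_n} \frac{\hat{\lambda_i}(\overline{E}_n) - \lambda_i(\overline{E}_n)}{n} \leq \frac{L\, C_K\, \log(2 r_n)}{n},
$$
which tends to $0$ under the hypothesis $\log r_n = o(n)$. Combined with the assumed convergence $\int f\, d\hat{\nu}_{\overline{E}_n} \to \int f\, d\nu$, this gives $\int f\, d\nu_{\overline{E}_n} \to \int f\, d\nu$ for every bounded Lipschitz $f$. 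Since bounded Lipschitz functions metrize weak convergence of probability measures on $\R$, the desired weak convergence $\nu_{\overline{E}_n} \to \nu$ follows.

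The argument is essentially routine once the sum estimates from Gaudron are invoked; the main conceptual input is the sandwich between the two sequences of slopes, whose total masses differ by only $\mathcal{O}_K(r_n \log(2r_n))$. No separate tightness verification is needed, because we work directly against bounded Lipschitz test functions and carry the limiting measure $\nu$ along from the hypothesis; the assumption $\log r_n = o(n)$ enters solely to make the explicit error term above vanish in the limit. I do not foresee a genuine obstacle here, only the minor care required in passing from Lipschitz to general bounded continuous test functions, which is entirely standard.
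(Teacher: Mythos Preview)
Your proof is correct and follows essentially the same approach as the paper's: both control $\sum_{i}(\hat{\lambda_i}-\lambda_i)$ by $O(r_n\log(2r_n))$ via the sandwich $\hat{\lambda_i}\ge\lambda_i\ge\lambda_i'$ and then use a Lipschitz-type bound on the test function to conclude. The only cosmetic difference is that the paper tests against smooth compactly supported functions (using the mean value theorem with $\|h'\|_\infty$), whereas you test against bounded Lipschitz functions, which is arguably cleaner since it avoids the extra step of upgrading vague to weak convergence.
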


\begin{proof} Since smooth functions are dense within the space of continuous functions having compact support, it will
suffice to show that for every smooth function $h$ with compact support, 
$$
e_n = \frac{1}{r_n} \sum_{i=1}^{r_n} \left( h\left(\frac{1}{n}\hat{\lambda_i}(\overline{E}_n)\right) - h\left(\frac{1}{n}\lambda_i(\overline{E}_n)\right) \right)
$$
converges to $0$ as $n$ tends to infinity.  By the mean value theorem,
$$|e_n| \le \frac{||h'||_{\infty}}{n r_n} \sum_{i = 1}^{r_n} |\hat{\lambda_i}(\overline{E}_n) - \lambda_i(\overline{E}_n)|.$$
The discussion preceding the statement of the proposition shows
$$\sum_{i = 1}^{r_n} |\hat{\lambda_i}(\overline{E}_n) - \lambda_i(\overline{E}_n)| = \sum_{i = 1}^{r_n} \hat{\lambda_i}(\overline{E}_n) - 
\sum_{i = 1}^{r_n}  \lambda_i(\overline{E}_n) = O(r_n \log(2r_n)).$$
This gives 
$$|e_n| = O(\log(2 r_n)/n)  = o(1)$$
as claimed. 
\end{proof}

From now on, let $X$ be a projective variety of dimension $d$ over a number field $K$, and let $L$ be an ample line bundle on $X$, endowed with a continuous adelic metric $(| \cdot |_{L,v})_v$, in the sense of \cite{Zhang}. We assume that for all but a finite number of places, the metrics $(| \cdot |_{L,v})_v$ come from a single integral model of $(X,L)$ over $\mathcal{O}_K$. The $K$-vector space $H^0(X,L^{\otimes n})$ is an adelic vector bundle, in the sense of \cite{Gaudron}, if equipped with the family of norms 
$$
|| s ||_{L^{\otimes n},v} = \sup_{x \in X({\mathbb C_v})} |s(x)|^{\otimes n}_{L,v}.
$$
Even if the adelic vector bundle $H^0(X,L^{\otimes n})$ is not hermitian, one can still define its naive adelic successive maxima $(\lambda_{i,n})_{i=1}^{r_n} = (\lambda_i(H^0(X,L^{\otimes n}))_{i=1}^{r_n}$.    We will rely on the following
fundamental theorem of Chen.

\begin{theorem} {\rm (Chen)}
\label{thm:ChenTheorem} Under the above hypotheses, the sequence of probability measures
$$
\nu_{n} = \frac{1}{r_n} \sum_{i=1}^{r_n} \delta_{\frac{1}{n}\lambda_{i,n}}
$$
 converges weakly to a compactly supported probability measure $\nu$ on $\mathbb{R}$. 
 \end{theorem}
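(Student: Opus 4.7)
The plan is to reduce to the hermitian case already treated by Proposition \ref{prop:hermitianadelic} combined with Chen's convergence theorem for Harder-Narasimhan-Grayson-Stuhler slopes in \cite{Chen}. The adelic vector bundles $H^0(X, L^{\otimes n})$ are equipped with sup-norms, hence are not hermitian at archimedean places; the first task is to replace these sup-norms by hermitian $L^2$-norms without disturbing the limiting measure.

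First I would fix, at each archimedean place $v$, a smooth positive volume form on $X(\mathbb{C}_v)$ and define an $L^2$-norm $\|\cdot\|_v^{L^2}$ on $H^0(X, L^{\otimes n})$ by integrating the pointwise metric. A standard Gromov-type inequality for sections of powers of an ample line bundle with continuous metric provides a constant $C_v > 0$, independent of $n$, such that
$$\|s\|_v^{L^2} \leq \|s\|_{L^{\otimes n}, v} \leq C_v \, n^{O(1)} \, \|s\|_v^{L^2}$$
for every $s \in H^0(X, L^{\otimes n})$. Since at almost all finite places the two norm structures already coincide via the fixed integral model of $(X,L)$ over $\mathcal{O}_K$, the heights $\lambda(s)$ computed in the sup-norm and in the hermitian $L^2$ structure differ by $O(\log n)$ uniformly in $s$. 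After normalization by $1/n$, the naive adelic successive maxima of the two structures therefore produce the same weak limit of measures. Denoting by $\overline{E}_n$ the hermitian adelic bundle obtained in this way, Proposition \ref{prop:hermitianadelic} further reduces the problem to the weak convergence of the measures $\hat{\nu}_{\overline{E}_n}$ attached to the Harder-Narasimhan-Grayson-Stuhler slopes of $\overline{E}_n$.

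This is the content of Chen's main convergence theorem. Its proof hinges on two ingredients. The first is uniform compactness of supports: the arithmetic Hilbert-Samuel theorem pins down the average slope $\widehat{\mathrm{deg}}(\overline{E}_n)/(n r_n)$ up to $o(1)$, while explicit sup-norm estimates on sections of $\overline{L}$ give an upper bound on $\hat{\lambda}_1(\overline{E}_n)/n$; these two bounds together confine all slopes to a fixed compact interval independent of $n$. The second ingredient is the finite generation of the graded $K$-algebra $R = \bigoplus_n H^0(X, L^{\otimes n})$, which holds because $L$ is ample. Interpreting the HNGS-slopes as defining an $\mathbb{R}$-filtration on each $R_n$, one shows that this filtration is multiplicative up to a controlled error under the maps $R_a \otimes R_b \to R_{a+b}$. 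A Fekete-type subadditivity argument on the Laplace transforms $t \mapsto \int e^{-tx}\, d\hat{\nu}_{\overline{E}_n}(x)$ then produces a pointwise limit for every $t \in \mathbb{R}$, and injectivity of the Laplace transform on compactly supported measures upgrades this to weak convergence of $\hat{\nu}_{\overline{E}_n}$ to a compactly supported probability measure $\nu$ on $\mathbb{R}$.

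The main obstacle is establishing the asymptotic multiplicativity of the arithmetic $\mathbb{R}$-filtration: one must verify that the tensor product of a section of slope $\geq \alpha$ in $R_a$ with one of slope $\geq \beta$ in $R_b$ yields a section of slope $\geq \alpha + \beta + o(a+b)$ in $R_{a+b}$. Over a function field this is exact, but in the arithmetic setting the archimedean contributions generate error terms that must be absorbed; the comparison between sup-norms and $L^2$-norms on tensor products, controlled by the same Gromov-type estimates used in the reduction step, is precisely what makes this absorption possible. Once this multiplicativity is in place, Fekete's lemma closes the argument.
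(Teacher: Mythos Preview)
Your proposal is correct and follows exactly the paper's approach: replace the sup-norms at archimedean places by $L^2$-norms with respect to a fixed volume form (the $O(\log n)$ discrepancy being negligible after normalization by $1/n$), then invoke Proposition~\ref{prop:hermitianadelic} together with Theorem~4.1.8 of \cite{Chen}. Your additional sketch of the internal mechanics of Chen's theorem (compact support via Hilbert--Samuel and maximal-slope bounds, multiplicativity of the filtration on the graded section ring, Fekete-type subadditivity) goes beyond what the paper records but is accurate and consistent with the cited reference.
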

 
 \noindent Indeed, replacing the $L^{\infty}$-norms at archimedean places by $L^{2}$-norms with respect to a fixed volume form only changes the normalized successive maxima $\frac{1}{n}\lambda_{i,n}$ by the negligible amount $O \left( \frac{\log(n)}{n} \right)$, so that one is left with a sequence of hermitian adelic vector bundles over $K$ which satisfies the hypotheses of Proposition \ref{prop:hermitianadelic} by Theorem $4.1.8$ of \cite{Chen}.

\begin{remark}
\label{rem:QuentinRemark} The weak convergence of the sequence $(\nu_n)_n$ also holds when the adelic metric has mild singularities. More precisely, let us assume that $d=1$, let $(| \cdot |_{L,v})_v$ be an adelic metric coming for all but a finite number of places from a single integral model of $(X,L)$ over $\mathcal{O}_K$, and let us allow $(| \cdot |_{L,v})_v$ to have logarithmic singularities at finitely many places. Namely, if $L'$ is the line bundle $L$ endowed with a continuous adelic metric $(| \cdot |_{L',v})_v$ as above then we require $| \cdot |_{L,v} = \phi_v | \cdot |_{L',v}'$ where $\phi_v$ is a non-negative continuous function such that
$$
|1_D|_{\mathcal{O}(D)}^{M} \leq \phi_v 
$$
for some integer $M$ and some continuous metric $|\cdot|_{\mathcal{O}(D)}$ on $\mathcal{O}(D)$. The homomorphism $H^0(X,L^{\otimes n}) \rightarrow H^0(X,L'(MD)^{\otimes n})$ then has operator norm at most $1$ at each place. Thus $n^{-1} \lambda_1(H^0(X,L^{\otimes n}))$ is bounded. With the help of Corollary $4.1.4$ and Remark $4.1.5$ of \cite{Chen} one concludes that the corresponding sequence of measures $(\nu_n)_n$ is weakly convergent.
\end{remark}

\section{Modular forms and Petersson norms }
\label{s:modular}

In \S \ref{s:setupmodular} we recall some work of  Bost \cite{BostLetter} and  K\"uhn \cite{Kuhn}  concerning the interpretation of holomorphic modular
forms of weight $12k$ for $\mathrm{SL}_2(\mathbb{Z})$ as sections of the $k^{\mathrm{th}}$ power  of a particular metrized line bundle on $\mathbb{P}^1_{\mathbb{Z}}$ for $k \ge 1$.
We then  study in \S \ref{s:succminmodular} the successive maxima $\{\lambda_{i,k}\}_{i = 1}^{k}$ associated to the lattice $\mathcal{S}_{12k}(\Gamma,\mathbb{Z})$ of cusp forms
of weight $12k$ with integral Fourier coefficients with respect to the Petersson  inner product.    

%In Theorem \ref{thm:succ} we show the following results.   The Dirac measures associated to the sets $\{\frac{\lambda_{i,k}}{k}\}_{i = 1}^{k}$  converge as $k \to \infty$ to a measure $\nu$ having compact support.  However, for all real numbers $r$ and $\delta$ with $\delta < 1$, there are at least $k^\delta$of elements of  $\{\frac{\lambda_{i,k}}{k}\}_{i = 1}^{r_k}$ less than $r$ for all sufficiently large $k$.
 %Furthermore, the proportion of successive maxima arising from normalized Hecke eigenforms goes to $0$ as $k \to \infty$.     
%We conclude \S \ref{s:succminmodular} by discussing zeros of modular forms with integral Fourier coefficients which have
 %small Petersson norm.  

\subsection{Modular forms as sections of a metrized line bundle}
\label{s:setupmodular}

  Let $\mathbb{H}$ be the upper half plane and let
$\Gamma = \mathrm{PSL}(2,\mathbb{Z})$ be  the modular group.  Then $X = \Gamma \backslash (\mathbb{H} \cup \mathbb{P}^1(\mathbb{Q}))$
has a natural structure as a Riemann surface.  The classical $j$ function of
$z \in \mathbb{H}$ has expansion
$$j(z) = \frac{1}{q} + 744 + \sum_{n=1}^\infty a_n q^n \quad \mathrm{in} \quad q = e^{2 \pi i z}.$$
The map $z \to j(z)$ defines an isomorphism $X \to \mathbb{P}^1_\mathbb{C}$.  

The volume form of the hyperbolic metric on $\mathbb{H}$ is 
\begin{equation}
\label{eq:volhyp}
\mu = \frac{dx \wedge dy}{y^2} = \frac{i}{2} \frac{dz \wedge d\overline{z}}{\mathrm{Im}(z)^2}
\end{equation}
This form has singularities at the cusp and at the elliptic fixed points of $\Gamma$, as described in \cite[\S 4.2]{Kuhn}.

Define
\begin{equation}
\label{eq:deltaexp}
\Delta(z) = q \prod_{n =1 }^\infty (1 - q^n)^{24} = q + \sum_{n > 1} b_n q^n
\end{equation}
to be  the normalized cusp form of weight $12$ for $\Gamma$. Let $S_{i\infty}$
be the unique cusp of $X$, so that $S_{i\infty}$ is associated with the orbit of
$\mathbb{P}^1(\mathbb{Q})$ under $\Gamma$.  

Suppose $k$ is a positive integer.  In \cite[Def. 4.6]{Kuhn} the
line bundle $$\mathcal{M}_{12k}(\Gamma)_\infty = \mathcal{O}_X(S_{i\infty})^{\otimes k}$$ is 
defined to be the line bundle of modular forms of weight $12k$ with respect to $\Gamma$.  This is shown to be compatible with the usual classical definition of
modular forms.  In particular,  there is an isomorphism
\begin{equation}
\label{eq:classical}
M_{12k}(\Gamma) \to H^0(X,\mathcal{O}_X(S_{i\infty})^{\otimes k})
\end{equation}
between the space $M_{12k}(\Gamma)$ of classical modular forms $f = f(z)$
of weight $12k$ and $H^0(X,\mathcal{O}_X(S_{i\infty})^{\otimes k})$ which sends $f$
to the element $f/\Delta^{k}$ of the function field $\mathbb{C}(j)$ of $X$ over $\mathbb{C}$.

The Petersson metric $| \ |_\infty$ on $\mathcal{M}_{12k}(\Gamma)_\infty$ is defined in \cite[Def. 4.8]{Kuhn} by
\begin{equation}
\label{eq:Petersson}
|f|_\infty^2 (z) = |f(z)|^2 (4 \pi \ \mathrm{Im}(z))^{12k}
\end{equation}
if $f$ is a meromorphic section of $\mathcal{M}_{12k}(\Gamma)_\infty$.  It is shown in \cite[Prop. 4.9]{Kuhn} that this metric is logarithmically singular with respect to the cusp and elliptic fixed points of $X$.  See \cite[p. 227-228]{Kuhn} for the reason that the factor $4 \pi$ is used on the right side of (\ref{eq:Petersson})

As in \cite[\S 4.11]{Kuhn}, we define an integral model of $X$ to be
$$\mathcal{X} = \mathrm{Proj}(\mathbb{Z}[Z_0,Z_1])$$ with $Z_0$ and $Z_1$ corresponding
to the global sections $j \cdot \Delta$ and $\Delta$ of the ample line bundle
$\mathcal{M}_{12}(\Gamma)_\infty$.  The point $S_{i \infty}$ 
defines a section $\overline{S}_{i \infty}$ of $\mathcal{X} = \mathbb{P}^1_{\mathbb{Z}} \to \mathrm{Spec}(\mathbb{Z})$.  We extend $\mathcal{M}_{12k}(\Gamma)_\infty$
to the line bundle 
$$\mathcal{M}_{12k}(\Gamma) = \mathcal{O}_{\mathcal{X}}(\overline{S}_{i\infty})^{\otimes k}$$
on $\mathcal{X}$.  This model then gives natural metrics $| \ |_v$ at all non-archimedean places $v$ for the induced line bundle $\mathcal{M}_{12k}(\Gamma)_{\mathbb{Q}}$ on the general fiber $X_{\mathbb{Q}} = \mathbb{Q} \otimes_{\mathbb{Z}} \mathcal{X}$. When $v$ is the infinite place of $\mathbb{Q}$, we let $| \ |_v$ be the Petersson metric $| \ |_\infty$.  

\begin{proposition}
\label{prop:globals} The global sections $H^0(\mathcal{X}, \mathcal{M}_{12k}(\Gamma))$ are identified with the $\mathbb{Z}$-lattice of all modular forms $f$
of weight $12k$ with respect to $\Gamma$ which have integral $q$-expansions
at $S_{i\infty}$.  These are the the sections $f$ of 
$H^0(\mathcal{X}_{\mathbb{Q}}, \mathcal{M}_{12k}(\Gamma)_{\mathbb{Q}})$
such that for all finite places $v$ of $\mathbb{Q}$ one has
\begin{equation}
\label{eq:finiteheight}
||f||_{\mathcal{M}_{12k}(\Gamma),v} = \mathrm{sup}_{z \in X_{\mathbb{Q}}(\mathbb{C}_v)} |f|_v(z) \le 1.
\end{equation}
If $f$ is not in $B \cdot H^0(\mathcal{X}, \mathcal{M}_{12k}(\Gamma))$ for any integer $B > 1$ then 
$$||f||_{\mathcal{M}_{12k}(\Gamma),v} = 1 \quad \mathrm{for \ all  \ finite } \quad v.$$
The sublattice $\mathcal{S}_{12k}(\Gamma,\mathbb{Z})$ of all cusp forms
in $H^0(\mathcal{X}, \mathcal{M}_{12k}(\Gamma))$ has corank $1$ and rank $k$.  If
$f \in \mathcal{S}_{12k}(\Gamma,\mathbb{Z})$, 
the $L^2$ Hermitian norm at the infinite place $v = \infty$ of  $f$ is the usual Petersson norm
\begin{equation}
\label{eq:peter}
||f||^2_{\mathcal{M}_{12k}(\Gamma),\infty,herm} = 
\int_{X(\mathbb{C})} |f|^2_\infty (z) \mu(z) = \int_{X(\mathbb{C})} |f(z)|^2 (4\pi y)^{12k} \frac{dx dy}{y^2}
\end{equation}
associated to $f$, where $\mu(z)$ is the the volume form of the hyperbolic metric given in (\ref{eq:volhyp}).  
\end{proposition}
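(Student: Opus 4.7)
The plan is to reduce every claim to an explicit calculation on the integral model $\mathcal{X} = \mathrm{Proj}(\mathbb{Z}[Z_0, Z_1])$, exploiting the fact that $\overline{S}_{i\infty}$ is the Cartier divisor cut out by $Z_1 = \Delta$. This yields an isomorphism $\mathcal{M}_{12k}(\Gamma) \simeq \mathcal{O}_{\mathbb{P}^1_\mathbb{Z}}(k)$, whose global sections form the free $\mathbb{Z}$-module $\mathbb{Z}[Z_0, Z_1]_k$ of homogeneous polynomials of degree $k$. Under (\ref{eq:classical}), the section $\sum_{i=0}^k a_i Z_0^i Z_1^{k-i}$ corresponds to the modular form $f = \sum_i a_i (j\Delta)^i \Delta^{k-i} = \Delta^k P(j)$, where $P(j) = \sum_i a_i j^i$.

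Next I would establish the identification with modular forms having integral $q$-expansion. Because $\Delta = q + O(q^2)$ and $j\Delta = 1 + 744\, q + O(q^2)$ both lie in $\mathbb{Z}\ps{q}$ with leading coefficient $1$, the product $(j\Delta)^i \Delta^{k-i}$ has integer $q$-expansion starting at $q^{k-i}$ with leading coefficient $1$. Therefore the $\mathbb{Z}$-linear map sending $(a_0,\ldots,a_k)$ to the first $k+1$ Fourier coefficients of $f$ is represented, after reversing indices, by a unitriangular integer matrix, hence lies in $\mathrm{GL}_{k+1}(\mathbb{Z})$. It follows that $\mathbb{Z}[Z_0, Z_1]_k$ maps isomorphically onto the $\mathbb{Z}$-lattice of modular forms of weight $12k$ with integral $q$-expansion at $S_{i\infty}$.

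For the non-archimedean sup norms, I would rely on the fact that on $\mathbb{P}^1(\mathbb{C}_v)$ the sup norm of a homogeneous polynomial of degree $k$ relative to the standard integral structure on $\mathcal{O}(k)$ equals its Gauss norm $\max_i |a_i|_v$. Hence $\|f\|_{\mathcal{M}_{12k}(\Gamma),v} \le 1$ if and only if every $a_i$ lies in $\mathcal{O}_{K_v}$, and combining this with $H^0(\mathcal{X}, \cdot) = \bigcap_v H^0(\mathcal{X}_{\mathbb{Z}_v}, \cdot)$ yields (\ref{eq:finiteheight}). If $f$ is primitive, i.e. not in $B \cdot H^0$ for any integer $B > 1$, then $\gcd(a_i) = 1$ (and by the unitriangular transition of paragraph two the same is true for the Fourier coefficients), so at every prime $p$ some $a_i$ is a $p$-adic unit and $\|f\|_v = 1$ for every finite place $v$.

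Finally, $f = \Delta^k P(j)$ is a cusp form precisely when its $q$-expansion has no constant term; by the unitriangular analysis above this is equivalent to $a_k = 0$, so $\mathcal{S}_{12k}(\Gamma,\mathbb{Z}) \simeq \mathbb{Z}[j]_{\le k-1}$ is of rank $k$ and corank $1$. The archimedean formula (\ref{eq:peter}) is then obtained by substituting (\ref{eq:Petersson}) and (\ref{eq:volhyp}) directly into the defining integral $\|f\|^2_{\mathcal{M}_{12k}(\Gamma),\infty,\mathrm{herm}} = \int_{X(\mathbb{C})} |f|_\infty^2(z)\, \mu(z)$. The main obstacle is the unitriangular argument in paragraph two: it rests essentially on the classical integrality and normalization of the $q$-expansions of $\Delta$ and $j\Delta$, whereas everything else reduces to the Gauss-norm principle on $\mathbb{P}^1$ and a direct unraveling of the definitions.
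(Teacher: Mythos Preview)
Your proof is correct and follows essentially the same approach as the paper, only with considerably more detail. The paper's proof is a terse sketch: it cites the integrality and leading terms of the $q$-expansions of $j$ and $\Delta$ for the first statement (your unitriangular argument makes this explicit), invokes ``the definition of the metrics associated to integral models'' for the finite-place claims (your Gauss-norm computation on $\mathbb{P}^1_{\mathbb{Z}}$ unpacks exactly this, and is confirmed by Remark~\ref{rem:narch norm of modularforms}), uses Riemann--Roch for the rank $k+1$ (you instead read it off directly from $\mathbb{Z}[Z_0,Z_1]_k$), and declares the Petersson formula a matter of definition, as you do.
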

\begin{proof} The first statement is a consequence of the fact that the $q$ expansions of $j$ and $\Delta$ have integral coefficients and begin with
$1/q$ and $q$, respectively.  The statements concerning finite places $v$ 
is just the definition of the metrics at such places which are associated to integral models of line bundles.  
The rank of $H^0(\mathcal{X}, \mathcal{M}_{12k}(\Gamma))$ over $\mathbb{Z}$ is the dimension over $\mathbb{C}$
of $H^0(X,\mathcal{M}_{12k}(\Gamma)_\infty) = H^0(X,\mathcal{O}_X(S_{i\infty})^{\otimes k})$, which equals $k+1$ by
Riemann Roch.  The last statement concerning cusp forms is the definition of the Petersson  norm
when this is normalized as in (\ref{eq:Petersson}).  
\end{proof}

\begin{remark}\label{rem:narch norm of modularforms} Since the sections $(j^{k-\ell} \Delta^k)_{\ell=0}^{k}$ form an integral basis of $H^0(\mathcal{X}, \mathcal{M}_{12k}(\Gamma))$, the norm $||\cdot||_{\mathcal{M}_{12k}(\Gamma),v}$ is given at non archimedean places by
$$
|| \sum_{\ell=0}^{k} a_\ell j^{k-\ell} \Delta^k||_{\mathcal{M}_{12k}(\Gamma),v} = \max_{0 \leq l \leq k} |a_l|_v.
$$
In particular, for any $f$ in $H^0(\mathcal{X}, \mathcal{M}_{12k}(\Gamma))_{\mathbb{Q}_v}$, the norm $||f||_{\mathcal{M}_{12k}(\Gamma),v}$ belongs to the valuation semigroup $|\mathbb{Q}_v|$.

\end{remark}

\subsection{Successive maxima and modular forms}
\label{s:succminmodular}

To state our main result we need a definition.   

\begin{definition}
\label{def:nocongruence} A non-zero form $f \in  \mathcal{S}_{12k}(\Gamma,\mathbb{Z})$  does not arise from a congruence between eigenforms  if when we write $f$ as a linear combination  $\sum_i c_i f_i$ of distinct normalized eigenforms $f_i$, the $c_i$ are algebraic integers divisible
in the ring of all algebraic integers by the g.c.d. of the Fourier coefficients of $f$. 
\end{definition}

This terminology arises from the fact that if the $c_i$ are integral but the last requirement in the definition fails, 
there is a non-trivial congruence modulo the g.c.d. of the Fourier coefficients of $f$ between the forms $f_i$.

\begin{theorem} 
\label{thm:succ} 
 Let $\{\lambda_{i,12k}\}_{i = 1}^{k}$ be the naive adelic successive maxima associated to 
$\mathcal{S}_{12k}(\Gamma,\mathbb{Z})$ in Definition \ref{def:slopes}(ii) with respect to the $L^2$ Hermitian norm defined by the Petersson norm in
(\ref{eq:peter}).  
\begin{enumerate}
\item[i.]  The sequence of probability measures
$$
\nu_{12k} = \frac{1}{k} \sum_{i=1}^{k} \delta_{\frac{1}{k}\lambda_{i,12k}}
$$
converges weakly as $k \to \infty$ to a probability measure $\nu$.  
\item[ii.]  The support of the measure $\nu$ is bounded above by $2 \pi + 6(1 - \mathrm{log}(12)) = -2.62625...$.
The support of $\nu$ is not bounded below.  
\item[iii.] As $k \to \infty$,
the proportion of successive maxima which are produced by $f \in \mathcal{S}_{12k}(\Gamma,\mathbb{Z})$
which do not arise from a congruence between eigenforms  goes
to $0$.

\end{enumerate}
\end{theorem}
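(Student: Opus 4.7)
My plan is to prove a uniform upper bound $\lambda(f)/k \leq -6\log k + O(1)$ for every non-congruence $f \in \mathcal{S}_{12k}(\Gamma,\mathbb{Z})$, and then combine it with the weak convergence from part (i) to conclude that only $o(k)$ of the successive maxima can be achieved by such forms.

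First I would reduce to the case where the Fourier coefficients of $f$ are coprime: if $d$ is their g.c.d., the product formula gives $\lambda(f) = \lambda(f/d)$, and $f/d$ is still non-congruence with g.c.d. equal to $1$. For such an $f$, Remark \ref{rem:narch norm of modularforms} gives $\|f\|_v = 1$ at every finite place, so by Proposition \ref{prop:globals} the height $\lambda(f)$ equals $-\tfrac{1}{2}\log$ of the Petersson norm squared of $f$, and the coefficients $c_i$ in the expansion $f = \sum_i c_i f_i$ over distinct normalized Hecke eigenforms are algebraic integers in their respective Hecke fields. Grouping eigenforms into Galois orbits $\mathcal{O}$, the $c_i$ for $i\in\mathcal{O}$ are Galois conjugates of a single element $\alpha_\mathcal{O}\in K_\mathcal{O}$. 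When $\alpha_\mathcal{O}\neq 0$, the AM--GM inequality applied to $\{|\sigma(\alpha_\mathcal{O})|^2\}_\sigma$ combined with $|N_{K_\mathcal{O}/\mathbb{Q}}(\alpha_\mathcal{O})| \geq 1$ gives $\sum_{i\in\mathcal{O}}|c_i|^2 \geq [K_\mathcal{O}:\mathbb{Q}]\geq 1$. Since $f \neq 0$, orthogonality of the Hecke basis yields $\|f\|^2_{\mathrm{Pet}} \geq \min_i \|f_i\|^2_{\mathrm{Pet}}$.

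Next, the classical Petersson inner product formula involving $(12k-1)!$, $\pi^{12k+1}$, and $L(1,\mathrm{Sym}^2 f_i)$, together with the factor $(4\pi)^{12k}$ built into the metric in (\ref{eq:peter}), the bound $\log L(1,\mathrm{Sym}^2 f_i) = o(k)$, and Stirling's approximation, gives $\log \|f_i\|^2_{\mathrm{Pet}} = 12k\log(12k) - 12k + o(k)$ uniformly over all normalized eigenforms of weight $12k$. It follows that $\lambda(f)/k \leq -6\log(12k) + 6 + o(1) \to -\infty$ for every non-congruence $f$. To finish, fix $\varepsilon > 0$ and choose a continuity point $-M$ of $\nu$ with $\nu((-\infty,-M]) < \varepsilon$, which is possible since $\nu$ is a probability measure. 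Weak convergence of $\nu_{12k}$ to $\nu$ yields $\#\{i:\lambda_{i,12k}/k \leq -M\} \leq 2\varepsilon k$ for $k$ large. For $k$ also large enough that $-6\log k + O(1) \leq -M$, the uniform bound forces $\lambda(f)/k \leq -M$ for every non-congruence $f$, so in any inductive choice of representatives $\{g_i\}_{i=1}^k$ achieving the successive maxima, at most $2\varepsilon k$ of the $g_i$ can be non-congruence. Letting $\varepsilon\to 0$ concludes the proof.

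The principal technical input is the uniform control $\log L(1,\mathrm{Sym}^2 f_i) = o(k)$ as the weight varies; any polynomial-in-$k$ bound would suffice since we only require an error term smaller than $k\log k$, so the classical convexity bound is more than enough. A secondary point is that the Hecke field $K_\mathcal{O}$ can have degree comparable to $k$, but this causes no trouble, because the argument uses only the weak consequence $\sum_{i\in\mathcal{O}}|c_i|^2 \geq 1$ of AM--GM, independent of the size of $[K_\mathcal{O}:\mathbb{Q}]$.
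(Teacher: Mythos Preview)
Your argument is correct and follows the same outline as the paper's proof: reduce to coprime Fourier coefficients, use Galois-invariance to get an integrality constraint on the $c_i$, deduce $\langle f,f\rangle \ge \langle f_i,f_i\rangle$ for some eigenform $f_i$, bound $\langle f_i,f_i\rangle$ from below so that $\lambda(f)/k \to -\infty$, and conclude via the weak convergence from part~(i).

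The one substantive difference is your lower bound for the Petersson norm of a normalized eigenform. You invoke the exact formula involving $L(1,\mathrm{Sym}^2 f_i)$ together with a convexity bound for that $L$-value. The paper instead applies the elementary Lemma~\ref{lem:estimate} with $N=1$ (since a normalized eigenform has $a_1=1$), obtaining directly
\[
\langle f_i,f_i\rangle \;\ge\; 4\pi e^{-4\pi}(12k-2)!
\]
by integrating $|f_i(z)|^2 (4\pi y)^{12k}\,\frac{dx\,dy}{y^2}$ only over the strip $y\ge 1$ in the fundamental domain. This yields $2\lambda(f)\le -\log\bigl(4\pi e^{-4\pi}(12k-2)!\bigr)$, hence $\lambda(f)/k\le -c\log k$, with no $L$-function input at all. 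Your route is valid, but the paper's is strictly more elementary and keeps the argument self-contained within \S\ref{s:modular}. A minor stylistic point: where you use AM--GM on the Galois orbit to get $\sum_{i\in\mathcal O}|c_i|^2\ge 1$, the paper simply observes that since all archimedean conjugates of each nonzero $c_i$ occur among the $c_j$ and their product is a nonzero rational integer, some $|c_i|\ge 1$; either version suffices.
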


This result shows that in Remark \ref{rem:QuentinRemark}, the limit measure need not have compact
support when the metrics involved are allowed to have mild singularities.
We will prove in \S \ref{s:PeterssonFourier} more quantitive results about the successive maxima $\lambda_{i,12k}$
in this Theorem.

\begin{remark}
\label{rem:HS}
Consider the divisors $\mathrm{zer}(f)$ of complex zeros of elements $f$ of 
$S = \cup_{k > 0} \ \mathcal{S}_{12k}(\Gamma,\mathbb{Z})$.  
Recall that each such $\mathrm{zer}(f) = \sum_{x \in \mathbb{P}^1(\mathbb{C})} m_x x$
defines a Dirac measure $\mu(\mathrm{zer}(f)) = \frac{1}{\mathrm{deg}(f)} \sum_{x} m_x \delta_x $.  It follows from work of Holowinsky and Soundararajan \cite[Remark 2]{HoloSound} 
and  Rudnick \cite{Rudnick} that as $f$ ranges over any sequence of non-zero Hecke eigencusp forms of weights going to infinity,
the corresponding Dirac measures  $\mu(\mathrm{zer}(f))$ converge weakly to the 
Petersson measure $\mu$ in (\ref{eq:volhyp}).  
However, due to part (iii) of Theorem \ref{thm:succ},
we cannot conclude from this much information about the measures associated to the zeros of forms with large height.  For a discussion
of the latter measures, see \S \ref{s:AnExample} and \S \ref{s:notions}.  
It would be interesting
to know whether cusp forms with integral $q$-expansions which have small Petersson norms must vanish at particular points in the upper half plane.  
\end{remark}
 
 \subsection{Petersson norms and Fourier expansions }
\label{s:PeterssonFourier}

We begin with a well known argument for bounding Petersson norms from below.

\begin{lemma}
\label{lem:estimate}
Suppose that $0 \ne f = \sum_{n = 1}^\infty a_n q^n \in \mathcal{S}_{12k}(\Gamma,\mathbb{C})$. Let $N = \mathrm{ord}_\infty(f)$.  Then 
$1 \le N \le k$, and  the $L^2$ Hermitian norm at the infinite place $v = \infty$ of  $f$ in (\ref{eq:peter})
has the property that
\begin{eqnarray}
\label{eq:lowerbound}
||f||^2_{\mathcal{M}_{12k}(\Gamma),\infty,herm} &=& \int_{X(\mathbb{C})} |f(z)|^2 (4\pi y)^{12k} \frac{dx dy}{y^2}\nonumber\\
 & \ge&  \sum_{n = 1}^\infty |a_n|^2  4 \pi e^{-4 \pi n} \frac{(12k-2)!}{n^{12k-1}}\nonumber \\
 & \ge&  |a_N|^2 \cdot 4 \pi e^{-4 \pi N} \frac{(12k-2)!}{N^{12k-1}}
\end{eqnarray}
\end{lemma}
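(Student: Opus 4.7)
The plan is to carry out three standard manipulations and keep track of a single convexity trick that produces the stated constant. First I would dispose of the bounds $1\le N\le k$: $N\ge 1$ follows immediately from the fact that $f$ is a cusp form (so $a_0=0$). For $N\le k$, one may argue via the valence formula that any nonzero weight $12k$ form $f$ for $\Gamma$ satisfies $v_\infty(f)+\tfrac{1}{2}v_i(f)+\tfrac{1}{3}v_\rho(f)+\sum_{P\ne i\infty,i,\rho}v_P(f)=k$, so $v_\infty(f)\le k$; alternatively, via the isomorphism (\ref{eq:classical}), $f/\Delta^k$ is a section of $\mathcal{O}_X(S_{i\infty})^{\otimes k}$ on $X\simeq\mathbb{P}^1_{\mathbb{C}}$, and this line bundle has degree $k$.

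Next I would use that the measure $|f|_\infty^2\,\mu=|f(z)|^2(4\pi y)^{12k}\frac{dx\,dy}{y^2}$ is $\Gamma$-invariant on $\mathbb{H}$, so the integral over $X(\mathbb{C})=\Gamma\backslash\mathbb{H}$ equals its integral over the standard fundamental domain $\mathcal{F}=\{|x|\le \tfrac12,\ |z|\ge 1\}$. Since the vertical strip $S=\{|x|\le\tfrac12,\ y\ge 1\}$ is contained in $\mathcal{F}$ and the integrand is non-negative,
$$
\|f\|^2_{\mathcal{M}_{12k}(\Gamma),\infty,\mathrm{herm}}\ \ge\ (4\pi)^{12k}\int_{1}^{\infty}\!\int_{-1/2}^{1/2}|f(x+iy)|^2\,dx\ y^{12k-2}\,dy.
$$
Parseval applied to the Fourier expansion $f(x+iy)=\sum_{n\ge 1}a_n e^{-2\pi ny}e^{2\pi i nx}$ on the circle $[-1/2,1/2]$ gives $\int_{-1/2}^{1/2}|f(x+iy)|^2\,dx=\sum_{n\ge 1}|a_n|^2 e^{-4\pi ny}$, and Tonelli lets me swap sum and integral.

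The remaining task is to estimate $I_n:=\int_1^{\infty}e^{-4\pi ny}y^{12k-2}\,dy$ from below by $e^{-4\pi n}(12k-2)!/(4\pi n)^{12k-1}$. The trick is the substitution $y=1+t$, which yields $I_n=e^{-4\pi n}\int_0^\infty e^{-4\pi nt}(1+t)^{12k-2}\,dt$; using the elementary inequality $(1+t)^{12k-2}\ge t^{12k-2}$ valid on $t\ge 0$ (this is where the constant $e^{-4\pi n}$, rather than a messier incomplete $\Gamma$-value, enters), one obtains
$$
I_n\ \ge\ e^{-4\pi n}\int_0^\infty e^{-4\pi nt}t^{12k-2}\,dt\ =\ e^{-4\pi n}\,\frac{(12k-2)!}{(4\pi n)^{12k-1}}.
$$
Multiplying by $(4\pi)^{12k}$ and summing over $n$ then produces the first displayed inequality in (\ref{eq:lowerbound}), and the second follows by discarding every term in the resulting non-negative series except $n=N$. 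The only mildly delicate point is justifying the step from a fundamental domain to the strip $S$ (cleanly handled by $\Gamma$-invariance and positivity of the integrand); the rest is the convexity inequality $(1+t)^{12k-2}\ge t^{12k-2}$, which I regard as the main, and quite small, obstacle.
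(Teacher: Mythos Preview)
Your proof is correct and follows essentially the same approach as the paper: restrict the Petersson integral to the vertical strip $\{|x|\le\tfrac12,\ y\ge 1\}$ inside the standard fundamental domain, apply Parseval in $x$, and bound each resulting integral $\int_1^\infty e^{-4\pi ny}y^{12k-2}\,dy$ from below. The only difference is in this last step: the paper computes the exact indefinite integral (repeated integration by parts) and keeps the single term $e^{-4\pi n}(12k-2)!/(4\pi n)^{12k-1}$, whereas you shift $y\mapsto 1+t$ and use $(1+t)^{12k-2}\ge t^{12k-2}$ to reach the same bound---a slightly slicker but equivalent maneuver.
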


\begin{proof}   Since the $a_n$ are in $\mathbb{C}$,
we have $\overline{f(q)} = \sum_{n = 1}^\infty \overline{a_n} \overline{q}^n.$
For a fixed $ y \ge 1$ we have (as in \cite[p. 786]{Shimura}) that
\begin{eqnarray}
\label{eq:strip}
\int_{-1/2}^{1/2} |f(x + iy)|^2 dx &=& \int_{-1/2}^{1/2} f(q) \overline{f(q)} dx\nonumber\\
&=& \int_{-1/2}^{1/2} \sum_{n, m = 1}^\infty a_n \overline{a_m} q^n \overline{q}^m \nonumber \\
&=& \sum_{n, m = 1}^\infty a_n \overline{a_m} \int_{-1/2}^{1/2} e^{2\pi i( (n - m)x + (n + m) iy)} dx\nonumber\\
&=& \sum_{n= 1}^\infty |a_n|^2 e^{-4\pi ny}
\end{eqnarray} 
The standard fundamental domain for the action of $\mathrm{SL}_2(\mathbb{Z})$ on $\mathbb{H}$
contains the set $T = \{z = x + iy: -1/2 \le x < 1/2 \quad \mathrm{and}\quad y \ge 1\}$.  Therefore
\begin{eqnarray}
\label{eq:ineq1}
\int_{X(\mathbb{C})} |f(z)|^2 (4\pi y)^{12k} \frac{dx dy}{y^2} &\ge& \int_{T} |f(z)|^2 (4\pi y)^{12k} \frac{dx dy}{y^2}\nonumber \\
&=& \int_{y = 1}^\infty \int_{x = -1/2}^{x = 1/2} |f(x + iy)|^2 dx (4\pi)^{12k} y^{12k-2} dy\nonumber \\
%&=& (4 \pi)^k \int_{y = 1}^\infty \sum_{n = 1}^\infty a_n^2 e^{-4\pi ny} y^{k-2} dy\nonumber \\
&=& (4 \pi)^{12k} \sum_{n = 1}^\infty |a_n|^2 \int_{y = 1}^\infty e^{-4\pi ny} y^{12k-2} dy
\end{eqnarray}

For all constants $c \ne 0$ and all integers $\ell \ge 0$, one has the indefinite integral 
\begin{equation}
\label{eq:indef}
\int e^{-cy} y^{\ell} dy = -e^{-cy} \cdot \sum_{j = 0}^{\ell} \frac{y^{\ell - j} \ \ell!}{c^{j+1} (\ell -j)!}
\end{equation}
as one sees by differentiating the right side.
Setting $c = 4\pi n$ and $\ell = 12k-2$ and then integrating the left hand side from $y =1$ to $\infty$ gives
\begin{equation}
\int_{y = 1}^\infty e^{-4\pi ny} y^{12k-2} dy = e^{-4\pi n} \sum_{j = 0}^{12k-2} \frac{(12k-2)!}{(4\pi n)^{j+1} (12k - 2 - j)!} \ge e^{-4\pi n} \frac{(12k-2)!}{(4 \pi n)^{12k-1}}
\end{equation}
Substituting this back into (\ref{eq:ineq1}) gives the claimed inequalities.

\end{proof}

\subsection{Bounds on successive maxima}

The following result will be used later to analyze the support of limit measures associated to successive maxima.  

\begin{theorem}
\label{thm:succ2} The rank of $\mathbb{S}_{12k}(\Gamma,\mathbb{Z})$ over $\mathbb{Z}$ is $k$, and $\mathbb{S}_{12k}(\Gamma,\mathbb{Z})$ has 
$\{\Delta^k j^{k - \ell}: 1 \le \ell \le k\}$ as a basis over $\mathbb{Z}$.  Suppose $0 \ne f = \sum_{n = 1}^\infty a_n q^n \in 
\mathcal{S}_{12k}(\Gamma,\mathbb{Z})$.  Let $\mathrm{ord}_\infty(f)$ be the smallest $n$ such that $a_n \ne 0$.
Then $1/k \le \mathrm{ord}_\infty(f)/k \le 1$.  
Let $\lambda(f)$ be the logarithmic height of $f$ with respect to the metrics of 
Proposition \ref{prop:globals}.  Let 
%$h^{-1}:\mathbb{R} \to \mathbb{R}_{>0}$ be the inverse function to the monotonically increasing function
$\ell:\mathbb{R}_{>0} \to \mathbb{R}$ be the monotonically increasing function defined by 
$$\ell(c) = 2\pi c + 6(\mathrm{log}(c) + 1 - \mathrm{log}(12)).$$
\begin{enumerate}
\item[i.] For $\epsilon > 0$, there are only finitely many $k$ and $f$ for which 
$$\lambda(f)/k - \ell(\mathrm{ord}_\infty(f)/k) \ge \epsilon$$  
up to replacing $f$ by non-zero rational multiple of itself (which does not change $\lambda(f)$ or $\mathrm{ord}_\infty(f)$). 

\item[ii.]  Suppose $r_0 > \ell(1) = 2 \pi + 6(1 - \mathrm{log}(12)) = -2.62625...$.  Then for all sufficiently large $k$ and all
$f \in 
\mathcal{S}_{12k}(\Gamma,\mathbb{Z})$ one has 
$\lambda(f)/k \le r_0$.  

\item[iii.]  Suppose $1 > c > 0$ and $ \epsilon > 0$. For all sufficiently large $k$, there are at least $ck$ successive maxima $\lambda_{i,12k}$ among the
total of $k$ successive maxima associated to $\mathcal{S}_{12k}(\Gamma,\mathbb{Z})$ for which 
$$\frac{\lambda_{i,12k}}{k} \le \ell(c) + \epsilon.$$
One has $\lim_{c \to 0^+} \ell(c) = -\infty$.
%Since $h(c) \to -\infty$ as $c \to 0^+$, the support of the limit measure $\nu$ in part (i) is not bounded below.

\end{enumerate} 
\end{theorem}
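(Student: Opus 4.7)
The plan is to prove an explicit uniform inequality
$$\lambda(f)/k \le \ell(\mathrm{ord}_\infty(f)/k) + O(\log k / k)$$
for all nonzero primitive $f \in \mathcal{S}_{12k}(\Gamma, \mathbb{Z})$, from which all three parts fall out by elementary manipulations. The basis claim and the bounds $1 \le \mathrm{ord}_\infty(f) \le k$ are immediate from Remark \ref{rem:narch norm of modularforms}: the sections $\{\Delta^k j^{k-\ell}\}_{\ell=0}^{k}$ form an integral basis of $H^0(\mathcal{X},\mathcal{M}_{12k}(\Gamma))$, and exactly those with $\ell \ge 1$ are cusp forms, since $\mathrm{ord}_\infty(\Delta^k j^{k-\ell}) = \ell$. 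By the product formula the height $\lambda(\cdot)$ is invariant under multiplication by $\mathbb{Q}^\times$, so it suffices to treat primitive $f$ (Fourier coefficient $\gcd$ equal to $1$); for such $f$, Remark \ref{rem:narch norm of modularforms} gives $\|f\|_v = 1$ at every finite place, and so $\lambda(f) = -\tfrac{1}{2}\log \|f\|^2_{\mathcal{M}_{12k}(\Gamma),\infty,\mathrm{herm}}$. Since $|a_N| \ge 1$ for $N = \mathrm{ord}_\infty(f)$, Lemma \ref{lem:estimate} yields
$$\|f\|^2_{\mathcal{M}_{12k}(\Gamma),\infty,\mathrm{herm}} \ge 4\pi\, e^{-4\pi N}\,(12k-2)!\,/\,N^{12k-1};$$
taking $-\tfrac{1}{2}\log$, applying Stirling in the form $\log((12k-2)!) = 12k\log(12k) - 12k + O(\log k)$, and collecting terms with $c := N/k$ produces the inequality above, with implicit constants uniform in $N \in \{1, \ldots, k\}$.

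Part (i) is then immediate: the inequality $\lambda(f)/k - \ell(\mathrm{ord}_\infty(f)/k) \ge \epsilon$ forces $O(\log k/k) \ge \epsilon$, bounding $k$. For each such $k$ the same inequality bounds $\|f\|^2_{\mathcal{M}_{12k}(\Gamma),\infty,\mathrm{herm}}$ from above, and applying Lemma \ref{lem:estimate} coefficient by coefficient gives an upper bound on each individual $|a_n|$. Since a cusp form of weight $12k$ is determined by its first $k$ Fourier coefficients, and these are integers, only finitely many primitive $f$ remain. For part (ii), $\ell$ is monotonically increasing on $(0,1]$ (its derivative is $2\pi + 6/c > 0$) and $\mathrm{ord}_\infty(f)/k \le 1$, so the key inequality yields $\lambda(f)/k \le \ell(1) + O(\log k/k) < r_0$ for every $r_0 > \ell(1)$ once $k$ is sufficiently large.

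For part (iii), fix $c' \in (c, 1)$ with $\ell(c') < \ell(c) + \epsilon/2$, possible by the continuity and strict monotonicity of $\ell$. Taking $k$ large enough that the error term in the key inequality is at most $k\epsilon/2$, every $f$ with $\lambda(f)/k > \ell(c) + \epsilon$ must satisfy $\ell(\mathrm{ord}_\infty(f)/k) > \ell(c')$, and hence $\mathrm{ord}_\infty(f) > c'k$. The $\mathbb{Q}$-subspace of $\mathcal{S}_{12k}(\Gamma, \mathbb{Q})$ consisting of cusp forms with vanishing order exceeding $c'k$ is spanned by $\{\Delta^k j^{k-\ell}\}_{\ell > c'k}$, so has dimension $k - \lfloor c'k \rfloor \le (1-c')k + 1 < (1-c)k$ for $k$ large. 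Therefore at most $(1-c)k$ successive maxima can exceed $k(\ell(c)+\epsilon)$, so at least $ck$ of them lie below this value. The limit $\ell(c) \to -\infty$ as $c \to 0^+$ is immediate since $6\log c \to -\infty$ while $2\pi c$ stays bounded.

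The main obstacle is the careful tracking of error terms in the Stirling expansion so as to guarantee they are $O(\log k)$ uniformly over $N \in \{1,\ldots, k\}$; this uniformity is essential, since the entire range of vanishing orders must be covered by a single bound before one can pass to the dimension count in part (iii). The reduction of $\lambda$ to the $L^2$ Petersson norm for primitive $f$ via the product formula, and the passage from a bound on $\mathrm{ord}_\infty$ to a subspace dimension bound via the explicit integral basis, are both formal.
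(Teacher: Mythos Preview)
Your proof is correct and follows essentially the same approach as the paper: reduce to primitive $f$ so that $\lambda(f) = -\tfrac{1}{2}\log\|f\|^2_{\mathrm{herm}}$, feed Lemma~\ref{lem:estimate} through Stirling to obtain the uniform bound $\lambda(f)/k \le \ell(\mathrm{ord}_\infty(f)/k) + O(\log k/k)$, and then read off (i)--(iii) by monotonicity of $\ell$ and a dimension count on forms of high vanishing order. The only cosmetic differences are that the paper cites Proposition~\ref{prop:globals} rather than Remark~\ref{rem:narch norm of modularforms} for $\|f\|_v=1$ at finite places, and in (iii) works directly with an integer cutoff $j$ rather than your auxiliary $c'$; your finiteness argument in (i) via bounding individual Fourier coefficients is slightly more explicit than the paper's, which simply appeals to boundedness of the Petersson norm on a lattice.
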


\begin{proof} By Proposition \ref{prop:globals}, $\mathcal{S}_{12k}(\Gamma,\mathbb{Z})$ has corank $1$ in $H^0(\mathcal{X}, \mathcal{M}_{12k}(\Gamma))$.
The rank of $H^0(\mathcal{X}, \mathcal{M}_{12k}(\Gamma))$ is $k +1$, so $\mathcal{S}_{12k}(\Gamma,\mathbb{Z})$ has rank $k$. The form $\Delta^k j^{k - i} $ lies in $ \mathcal{S}_{12k}(\Gamma,\mathbb{Z})$ for $0 < i \le k$, and its first non-zero term in its Fourier expansion at $\infty$ is $q^i$.   Hence the set of these forms is a $\mathbb{Z}$-basis for $\mathcal{S}_{12k}(\Gamma,\mathbb{Z})$, and $1 \le  \mathrm{ord}_\infty(f) \le k$ for $0 \ne f \in  \mathcal{S}_{12k}(\Gamma,\mathbb{Z})$.

%We now prove part (ii) of Theorem  \ref{thm:succ}. 
%Suppose  $0 \ne f = \sum_{n = 1}^\infty a_n q^n \in \mathcal{S}_{12k}(\Gamma,\mathbb{Z})$.
The logarithmic height of   $f$ with respect to the metrics $|| \ ||_{L,v}$ we have defined on $L = \mathcal{M}_{12k}(\Gamma)$ for 
each place $v$ of $\mathbb{Q}$ is
$$\lambda(f) = - \sum_{v} \mathrm{log} ||f||_{L,v}.$$
By the product formula, multiplying $f$ by a non-zero rational number does not change $\lambda(f)$.  We now replace $f$
by a rational multiple of itself without changing $\lambda(f)$ to be able to assume $f \in \mathcal{S}_{12k}(\Gamma,\mathbb{Z})$
is not in $B \cdot M_{12k}(\Gamma,\mathbb{Z})$ for any integer $B > 1$.

Proposition  \ref{prop:globals} shows 
$||f||_{L,v} = 1$ for each finite $v$, while if $v = v_\infty$ is the infinite place,
\begin{equation}
\label{eq:peter}
||f||_{L,v_\infty}^2 =  \int_{X(\mathbb{C})} |f(z)|^2 (4\pi y)^{12k} \frac{dx dy}{y^2}
\end{equation}
is the Petersson norm.  Since $f$ has integral Fourier coefficients, we find from (\ref{eq:lowerbound}) of Lemma \ref{lem:estimate} that 
 \begin{eqnarray}
 \label{eq:lower}
2 \lambda(f) &=& -  \mathrm{log}( \int_{X(\mathbb{C})} |f(z)|^2 (4\pi y)^{12k} \frac{dx dy}{y^2}) \nonumber \\
&\le& - \mathrm{log} ( 4 \pi e^{-4 \pi N} \frac{(12k-2)!}{N^{12k-1}} )\nonumber \\
&=& - \mathrm{log}(4\pi) + 4 \pi N - \mathrm{log}((12k-2)!)+ (12k-1) \mathrm{log}(N).
\end{eqnarray}

 Suppose $N = ck$ for some constants $r$ and $c$. Since $\mathrm{log}(N) \ge 0$,  (\ref{eq:lower}) gives 
\begin{eqnarray}
\label{eq:bounder}
2 \frac{\lambda(f)}{k}  &\le& - \frac{\mathrm{log}(4\pi)}{k} + 4 \pi c - \frac{\mathrm{log}((12k-2)!)}{k} + (12 - 1/k) \cdot (\mathrm{log}(c) + \mathrm{log}(k))\nonumber\\
&\le& 4 \pi c  + \frac{\mathrm{log}(12k-1) + \mathrm{log}(12k)}{k} - \frac{\mathrm{log}(12k)!}{k} + 12 \cdot (\mathrm{log}(c) + \mathrm{log}(k))\nonumber\\
%&\le&  4 \pi c  + \frac{\mathrm{log}(12k-1) + \mathrm{log}(12k)}{k} - \frac{12k \mathrm{log}(12k) - 12k}{k} + 12 \cdot (\mathrm{log}(c) + \mathrm{log}(k))\nonumber\\
%&\le&  4 \pi c + 2 \mathrm{log}(12k)/k -  \frac{12k (\mathrm{log}(12) + \mathrm{log}(k)) - 12k}{k} + 12 \cdot (\mathrm{log}(c) + \mathrm{log}(k))\nonumber \\
&\le&  4 \pi c + 2 \mathrm{log}(12k)/k - 12 (\mathrm{log}(12) - 1) + 12 \mathrm{log}(c) \nonumber \\
\end{eqnarray}
since $\mathrm{log}((12k)!) \ge 12k \mathrm{log}(12k) - 12k$. We conclude from (\ref{eq:bounder}) that 
\begin{equation}
\label{eq:limit1} 
\frac{\lambda(f)}{k}  - \ell(c) \le \mathrm{log}(12k)/k
\end{equation}
when $\ell(c) = 2\pi c + 6(\mathrm{log}(c) + 1 - \mathrm{log}(12))$.
% for $0 < c$, with inverse function $h^{-1}:\mathbb{R} \to \mathbb{R}_{> 0}$.
Thus (\ref{eq:limit1}) implies that if $\frac{\lambda(f)}{k} - \ell(c) \ge  \epsilon  > 0$ then $k$ is bounded above by a function of $\epsilon$.  For each fixed $k$,
we have $c = N/k \ge 1/k$ so $\ell(c)$ is bounded below.  Thus  $\lambda(f)/k - \ell(c)  \ge \epsilon > 0$ implies the Petersson norm
of $f$ is bounded from above. So there are only finitely many possibilities for $f$ up to multiplication by a non-zero rational number, 
as claimed in part (i) of Theorem \ref{thm:succ2}.

Part (ii) of Theorem \ref{thm:succ2} now follows from part (i). 

To prove part (iii), suppose $1 \le j \le k$.  By part (i), if $M(k,j)$ is the submodule of forms $f  \in \mathcal{S}_{12k}(\Gamma,\mathbb{Z})$ for which $\mathrm{ord}_\infty(f) > j$, the corank of $M(k,j)$ in $ \mathcal{S}_{12k}(\Gamma,\mathbb{Z})$ is
$j$.  So at least $j$ successive maxima of $ \mathcal{S}_{12k}(\Gamma,\mathbb{Z})$ do not arise from forms in $M(k,j)$.  
If $f$ is not in $M(k,j)$, then (\ref{eq:limit1}) shows
$$\frac{\lambda(f)}{k} \le \ell(\mathrm{ord}_\infty(f)/k) + \mathrm{log}(12k)/k \le \ell(j/k) + \mathrm{log}(12k)/k$$
since $\ell(c)$ is monotonically increasing with $c$.  Therefore at least $j$ of the successive maxima $\{\lambda_{i,12k}\}_{i = 1}^{k}$ associated to 
$\mathcal{S}_{12k}(\Gamma,\mathbb{Z})$ satisfy the bound
$$\frac{\lambda_{i,12k}}{k} \le \ell(j/k) + \mathrm{log}(12k)/k$$
Since  $\ell(c) \to -\infty$ as $c = j/k \to 0^+$ and $\mathrm{log}(12k)/k \to 0$ as $k \to \infty$, this proves part (iii) of Theorem \ref{thm:succ2}.
\end{proof}

\begin{lemma}\label{lem:slopebound0} There exists a constant $C > 0$ such that for any element $f$ of $\mathcal{S}_{12k}(\Gamma,\mathbb{Z})$ vanishing with order at least $N$ at infinity, we have
$$
\lambda(f) \leq  6 k \log \left( \frac{N}{k} \right) + Ck.
$$
\end{lemma}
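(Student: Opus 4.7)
The plan is to derive this estimate directly from Lemma \ref{lem:estimate} and the computation already carried out in the proof of Theorem \ref{thm:succ2}(i); the only new content is the bookkeeping step of packaging the linear-in-$k$ terms (namely $2\pi N$, $6k(1-\log 12)$, and the Stirling error) into a single constant $Ck$. Since $f$ has integer Fourier coefficients, one may first assume by the product formula that $f$ is primitive, without changing $\lambda(f)$; Proposition \ref{prop:globals} then gives $\|f\|_{L,v} = 1$ at every finite place $v$, so that $\lambda(f)$ coincides, up to an $O(\log k)$ correction coming from the comparison of sup and $L^2$ norms (as in the discussion preceding Theorem \ref{thm:ChenTheorem}), with minus the logarithm of the Petersson norm (\ref{eq:peter}).

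Let $M = \mathrm{ord}_\infty(f) \geq N$ and let $a_M$ be the leading Fourier coefficient. Integrality forces $|a_M| \geq 1$, so Lemma \ref{lem:estimate} yields
$$\|f\|_{\mathcal{M}_{12k}(\Gamma),\infty,\mathrm{herm}}^2 \;\geq\; 4\pi e^{-4\pi M}\, \frac{(12k-2)!}{M^{12k-1}}.$$
Taking logarithms and invoking Stirling's formula in the form $\log((12k-2)!) \geq 12k\log(12k) - 12k - O(\log k)$, one performs exactly the algebraic manipulation used between (\ref{eq:lower}) and (\ref{eq:bounder}) — rewriting $12k\log(12k) - (12k-1)\log M$ as $-12k\log(M/k) + 12k\log 12 + \log M$ — to obtain
$$\lambda(f) \;\leq\; 2\pi M + 6k\log(M/k) + 6k(1-\log 12) + O(\log k).$$

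To convert this into the stated form, one applies the bound with $N$ equal to the actual order $M$ (which is the natural reading, as the bound from Lemma \ref{lem:estimate} is tightest at the leading coefficient), so that $6k\log(M/k) = 6k\log(N/k)$. Using $N \leq k$ from Proposition \ref{prop:globals}, the term $2\pi N$ is absorbed into $2\pi k$, and the $O(\log k)$ Stirling remainder together with the sup-versus-$L^2$ discrepancy are each $o(k)$, so they too can be swallowed into a linear term. Collecting constants yields
$$\lambda(f) \;\leq\; 6k\log(N/k) + Ck$$
for any constant $C$ exceeding $2\pi + 6(1-\log 12)$ by a fixed amount sufficient to dominate the bounded correction $O(\log k)/k$ for all $k \geq 1$. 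There is no real technical obstacle; the delicate point is simply to verify that the Stirling remainder and the norm-comparison correction are both bounded by a constant multiple of $\log k$, so that the final constant $C$ is indeed independent of both $k$ and $f$.
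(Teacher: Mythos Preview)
Your proposal is correct and follows essentially the same route as the paper: apply Lemma \ref{lem:estimate} to bound the Petersson norm from below, use Stirling to convert $(12k-2)!$ into $(12k)^{12k}e^{-12k+O(\log k)}$, take logarithms, and absorb the linear terms $2\pi N \le 2\pi k$ and $6k(1-\log 12)$ into $Ck$. The paper's proof is just the one-line version of exactly this computation.

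One small point: the sup-versus-$L^2$ correction you invoke is unnecessary here. In this section $\lambda(f)$ is defined directly with the Petersson $L^2$ Hermitian norm at the archimedean place (see Theorem \ref{thm:succ} and the formula $2\lambda(f) = -\log\langle f,f\rangle$ in the proof of Theorem \ref{thm:succ2}), so once $f$ is primitive you have $\lambda(f) = -\tfrac{1}{2}\log\|f\|^2_{\mathcal{M}_{12k}(\Gamma),\infty,\mathrm{herm}}$ on the nose, with no $O(\log k)$ discrepancy to manage. This only simplifies your argument.
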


\begin{proof} By Lemma \ref{lem:estimate} and by Stirling's formula, we have
$$
||f||^2_{\mathcal{M}_{12k}(\Gamma),\infty,herm} \geq e^{O(k)} \frac{(12k)^{12k}}{N^{12k}},
$$
hence the result by taking (opposite of) logarithms.
\end{proof}

\begin{lemma}\label{lem:slopebound1} There exists a constant $c > 0$ such that for any integers $k,\ell$ with $1 \leq \ell \leq k$, we have
$$
%\tilde{\lambda}(\Delta^{k} j^{k-\ell}) \geq 
\lambda(\Delta^{k} j^{k-\ell}) \geq 6k \log\left( \frac{\ell}{k} \right)-c k.
$$ 
\end{lemma}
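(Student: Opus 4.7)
The plan is to compute $\lambda(f)$ directly, where $f = \Delta^{k} j^{k-\ell}$. Since $\Delta = q + O(q^{2})$ and $j = q^{-1} + 744 + O(q)$ both have integer Fourier coefficients, so does $f$; moreover its $q$-expansion starts with $q^{\ell}$, so the g.c.d.\ of its Fourier coefficients equals $1$. By Proposition \ref{prop:globals} and Remark \ref{rem:narch norm of modularforms}, this forces $\|f\|_{\mathcal{M}_{12k}(\Gamma),v} = 1$ at every finite place $v$. Consequently
$$
\lambda(f) \;=\; -\tfrac{1}{2}\log \|f\|^{2}_{\mathcal{M}_{12k}(\Gamma),\infty,\mathrm{herm}},
$$
and the lemma reduces to a uniform upper bound of the form $\|f\|^{2}_{\infty,\mathrm{herm}} \leq G^{k} (k/\ell)^{12k}$ for some constant $G$ independent of $k$ and $\ell$.

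To produce such a bound, fix a standard fundamental domain $\mathcal{F}$ for $\Gamma$ and, for some $Y_{0} > 1$ chosen once and for all, split $\mathcal{F} = \mathcal{F}_{0} \cup \mathcal{F}_{\infty}$ with $\mathcal{F}_{\infty} = \mathcal{F} \cap \{y \geq Y_{0}\}$. On $\mathcal{F}_{\infty}$, the product expansion $\Delta(z) = q \prod_{n \geq 1}(1 - q^{n})^{24}$ together with $|q| = e^{-2\pi y}$ bounded strictly away from $1$ gives $|\Delta(z)| \leq C_{1} e^{-2\pi y}$; and, using the Fourier expansion of $j$ with $Y_{0}$ large enough, $|j(z)| \leq 2 e^{2\pi y}$. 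The Petersson integrand $|\Delta(z)|^{2k} |j(z)|^{2(k-\ell)} (4\pi y)^{12k}/y^{2}$ is then majorized on $\mathcal{F}_{\infty}$ by
$$
C_{1}^{2k}\, 4^{k-\ell}\, (4\pi y)^{12k}\, e^{-4\pi \ell y}/y^{2},
$$
the exponential simplification coming from $-4\pi k y + 4\pi(k-\ell) y = -4\pi \ell y$. Integrating $y$ from $Y_{0}$ to $\infty$ (after extending down to $0$) produces the Gamma factor $(12k-2)!/(4\pi\ell)^{12k-1}$, and Stirling in the form $(12k-2)! \leq G_{1}^{k} (12k)^{12k} e^{-12k}$ collapses this to $E^{k} (k/\ell)^{12k}$ after absorbing the leftover linear factor $4\pi\ell \leq 4\pi k$ into the exponential constant. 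On the compact piece $\mathcal{F}_{0}$, both $|\Delta|_{\infty}(z) = |\Delta(z)|(4\pi y)^{6}$ and $|j(z)|$ are uniformly bounded, so this contribution is at most $F^{k}$, which is dominated by $F^{k} (k/\ell)^{12k}$ because $\ell \leq k$.

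Adding the two estimates yields $\|f\|^{2}_{\infty,\mathrm{herm}} \leq G^{k} (k/\ell)^{12k}$ with $G = E + F$, and taking $-\tfrac{1}{2}\log$ gives the lemma with $c = \tfrac{1}{2}\log G$. There is no genuine obstacle here: the argument is a bare-hands integral estimate, and the main care is in (i) producing the two pointwise bounds $|\Delta(z)| \leq C_{1} e^{-2\pi y}$ and $|j(z)| \leq 2 e^{2\pi y}$ on the cusp region uniformly in $y \geq Y_{0}$, and (ii) keeping track of the polynomial-in-$k$ factors produced by Stirling's formula and by the leading factor $4\pi\ell$, all of which can be absorbed into exponential constants of the form $A^{k}$.
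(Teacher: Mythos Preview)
Your proof is correct and follows essentially the same route as the paper. The only difference is that you split the fundamental domain into a compact piece $\mathcal{F}_0$ and a cuspidal piece $\mathcal{F}_\infty$, whereas the paper observes that the pointwise bounds $|\Delta(z)|\le c\,e^{-2\pi y}$ and $|j(z)|\le c\,e^{2\pi y}$ in fact hold uniformly on \emph{all} of the closed fundamental domain $F$ (because $|\Delta(z)|e^{2\pi y}$ and $|j(z)|e^{-2\pi y}$ are continuous on the compact part and their cuspidal behavior is the obvious one). With those uniform bounds in hand the paper runs your $\mathcal{F}_\infty$ computation over the whole of $F$ in one stroke, extending the $y$-integral down to $0$ and evaluating the resulting Gamma integral $\int_0^\infty e^{-4\pi\ell y}(4\pi y)^{12k}\,dy/y^2 = 4\pi(12k-1)!\,\ell^{1-12k}$, which after Stirling gives $(k/\ell)^{12k}e^{O(k)}$. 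Your split is harmless but unnecessary; the paper's version is just a one-region streamlining of the same estimate.
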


\begin{proof}
Since $\Delta^{k} j^{k-\ell}$ has integral $q$-expansion and unit leading coefficient, we have 
$$||\Delta^{k} j^{k-\ell}||_{\mathcal{M}_{12k}(\Gamma),v} = 1,$$
for any finite place $v$. In particular, we have 
$$\lambda(\Delta^{k} j^{k-\ell}) = - \log ||\Delta^{k} j^{k-\ell}||_{\mathcal{M}_{12k}(\Gamma),\infty, herm}.$$
Let 
\begin{equation}
\label{eq:fundomclosure}
F = \{z = x + iy: -1/2 \le x \le 1/2, x^2 + y^2 \ge 1\}
\end{equation} be the closure of the standard fundamental domain for the action of $\mathrm{SL}_2(\mathbb{Z})$ on $\mathbb{H}$. There is a constant $c \geq 1$ such that for any $z = x + iy$ in $F$, we have $|\Delta(z)| \leq c e^{-2 \pi y}$ and $|j(z)| \leq c e^{2 \pi y}$. We thus have
\begin{align*}
||\Delta^{k} j^{k-\ell}||^2_{\mathcal{M}_{12k}(\Gamma),\infty, herm} &= \int_{F} |\Delta(z)|^{2k}  |j(z)|^{2k - 2\ell} (4 \pi y)^{12k} \frac{dx dy}{y^2} \\
&\leq c^{4k-2 \ell} \int_{0}^{\infty} e^{- 4 \pi \ell y} (4 \pi y)^{12k} \frac{dy}{y^2} \\
&\leq 4 \pi c^{4k}  (12k-1)! \ell^{1-12k}\\
&= \left( \frac{k}{\ell} \right)^{12k} e^{O(k)},
\end{align*} 
hence the result by taking the logarithms of both sides of this inequality.
\end{proof}

\begin{lemma}\label{lem:slopebound2} Let $(\lambda_{j,k})_{j=1}^k$ be the successive maxima of $\mathcal{S}_{12k}(\Gamma, \mathbb{Q})$. We have
$$
\frac{\lambda_{j,k}}{k} = 6 \log\left( 1 - \frac{j-1}{k} \right) + O(1),
$$
where the implicit constant in $O(1)$ is absolute.
\end{lemma}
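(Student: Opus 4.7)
The plan is to sandwich $\lambda_{j,k}$ between matching lower and upper bounds, both derived from Lemmas \ref{lem:slopebound0} and \ref{lem:slopebound1}. The intuition comes from those two lemmas together: for the explicit integral basis $\{\Delta^k j^{k-\ell}\}_{\ell=1}^{k}$ of $\mathcal{S}_{12k}(\Gamma,\mathbb{Z})$ from Theorem \ref{thm:succ2}, the heights satisfy $\lambda(\Delta^k j^{k-\ell}) = 6k\log(\ell/k) + O(k)$, so the successive maxima should be read off from the largest $j$ such heights, namely those with $\ell \in \{k-j+1,\ldots,k\}$, whose smallest value is at $\ell = k-j+1$.

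For the lower bound, I would take the $j$ integral forms $\Delta^k j^{k-\ell}$ for $\ell = k-j+1,\ldots,k$. They are linearly independent (being part of the basis), and Lemma \ref{lem:slopebound1} gives $\lambda(\Delta^k j^{k-\ell}) \geq 6k \log(\ell/k) - ck$. The worst case $\ell = k-j+1$ yields
\[
\lambda_{j,k} \geq 6k\log\!\left(1 - \frac{j-1}{k}\right) - ck.
\]

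For the upper bound, I would start with arbitrary linearly independent $g_1,\ldots,g_j \in \mathcal{S}_{12k}(\Gamma,\mathbb{Q})$ with $\lambda(g_i) \geq \lambda$ for all $i$. Because $\lambda$ is invariant under nonzero $\mathbb{Q}$-scaling (product formula together with Remark \ref{rem:narch norm of modularforms}), I may assume each $g_i$ is a primitive element of $\mathcal{S}_{12k}(\Gamma,\mathbb{Z})$. The crux is a dimension count applied to the filtration $V_N := \{f \in \mathcal{S}_{12k}(\Gamma,\mathbb{Q}) : \mathrm{ord}_\infty f \geq N\}$, which has $\dim V_N = k-N+1$ (the subset $\{\Delta^k j^{k-\ell}\}_{\ell \geq N}$ is a basis). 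Since $\dim V_{k-j+2} = j-1 < j$, the $g_i$ cannot all lie in $V_{k-j+2}$, so at least one $g_{i_0}$ has $M := \mathrm{ord}_\infty g_{i_0} \leq k-j+1$. Applying Lemma \ref{lem:slopebound0} with $N = M$ then yields
\[
\lambda \leq \lambda(g_{i_0}) \leq 6k\log(M/k) + Ck \leq 6k\log\!\left(\frac{k-j+1}{k}\right) + Ck,
\]
where the last inequality uses monotonicity of $\log$ and $M \leq k-j+1$. Passing to the supremum over admissible $\lambda$ gives $\lambda_{j,k} \leq 6k\log(1 - (j-1)/k) + Ck$.

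Combining, $\lambda_{j,k}/k = 6\log(1 - (j-1)/k) + O(1)$ with the implicit constant inherited from the absolute constants $c,C$ of the two input lemmas, hence itself absolute. The only substantive step is the dimension-counting argument in the upper bound; everything else is mechanical given the preceding two lemmas.
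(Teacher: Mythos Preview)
Your proposal is correct and follows essentially the same approach as the paper's proof: the lower bound via the explicit sections $\Delta^k j^{k-\ell}$ for $\ell=k-j+1,\ldots,k$ and Lemma \ref{lem:slopebound1}, and the upper bound via the dimension count on the order-of-vanishing filtration to find a section with $\mathrm{ord}_\infty \leq k-j+1$, then bounding its height. The only cosmetic difference is that you invoke Lemma \ref{lem:slopebound0} for the upper bound whereas the paper unwinds it to Lemma \ref{lem:estimate} and Stirling directly; since Lemma \ref{lem:slopebound0} is precisely that computation packaged up, this is not a substantive distinction.
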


\begin{proof}
The inequality 
$$
\frac{\lambda_{j,k}}{k} \geq 6 \log\left( 1 - \frac{j-1}{k} \right) + O(1)
$$
follows from Lemma \ref{lem:slopebound1} by using the $j$ linearly independent sections $(\Delta^{k} j^{k-\ell})_{k-j+1 \leq \ell \leq k}$. We now prove the converse inequality.
Let $s_1,\dots,s_j$ be linearly independent elements of $\mathcal{S}_{12k}(\Gamma, \mathbb{Q})$ such that $\lambda(s_i) \geq \lambda_{j,k}$ for any $i$.   By Proposition \ref{prop:globals}, we can multiply the $s_i$'s by appropriate non zero rational numbers to be able to assume that
$$||s_i||_{\mathcal{M}_{12k}(\Gamma),v} = 1$$
for any finite place $v$ and for any $i$. Therefore 
$$\lambda(s_i) = - \log ||s_i||_{\mathcal{M}_{12k}(\Gamma),\infty, herm}.$$
The linear subspace of $\mathcal{S}_{12k}(\Gamma, \mathbb{Q})$ consisting of forms vanishing at $\infty$ to order at least $k - j + 2$ has dimension $j-1$, and therefore can not possibly contain all $s_i$'s. Thus there exists an index $i$ such that $s_i$ vanishes at $\infty$ to some order $N \leq k-j + 1$. By Lemma \ref{lem:estimate}, we have
$$
||s_i||_{\mathcal{M}_{12k}(\Gamma),\infty, herm} \geq  e^{-2 \pi N} \frac{(12k-2)!^{\frac{1}{2}}}{N^{6k-\frac{1}{2}}} \geq  \left( \frac{k}{N} \right)^{12k} e^{O(k)} \geq \left( \frac{k}{k-j+1} \right)^{12k} e^{O(k)}.
$$
We therefore obtain
$$
\frac{\lambda_{j,k}}{k} \leq \frac{\lambda(s_i)}{k} \leq 6 \log\left( 1 - \frac{j-1}{k} \right) + O(1).
$$

\end{proof}

\begin{lemma}
\label{lem:normcomparison}
There exists constants $c_1, c_2 >0$ such that for any element $f$ of $ \mathcal{S}_{12k}(\Gamma,\mathbb{R})$, the quantity $||f||_{\mathcal{M}_{12k}(\Gamma),\infty,sup} = \sup_{z \in X(\mathbb{C})} |f|_\infty (z) $ satisfies the inequalities
\begin{equation}
\label{eq:ineqs}
c_1 ||f||_{\mathcal{M}_{12k}(\Gamma),\infty,herm} \leq ||f||_{\mathcal{M}_{12k}(\Gamma),\infty,sup} \leq c_2 k^2 \log(3k) ||f||_{\mathcal{M}_{12k}(\Gamma),\infty,herm}
\end{equation}
\end{lemma}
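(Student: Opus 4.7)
The lower bound is immediate from pointwise domination: since the hyperbolic volume of the modular curve is $\mu(X(\mathbb{C})) = \pi/3$, we have
\[
||f||^2_{\mathcal{M}_{12k}(\Gamma),\infty,herm} = \int_{X(\mathbb{C})} |f|_\infty^2 \, d\mu \leq (\pi/3) \cdot ||f||^2_{\mathcal{M}_{12k}(\Gamma),\infty,sup},
\]
so one may take $c_1 = \sqrt{3/\pi}$. For the upper bound, the plan is to apply the Euclidean mean-value inequality for the subharmonic function $|f|^2$ (recalling that $f$ is holomorphic) on a carefully chosen disc around a point $z_0 = x_0 + iy_0$. Since $|f|_\infty$ is $\Gamma$-invariant, we may restrict to $z_0$ in the fundamental domain $F$ of \eqref{eq:fundomclosure}, so $y_0 \geq \sqrt{3}/2$. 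The argument splits according to whether $y_0$ is moderate or large compared with $k$.

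In the \emph{interior regime} $y_0 \leq 6k$, I would take $D = D(z_0, r)$ with $r = y_0/(24k) \leq 1/2$, which lies in $\mathbb{H}$ and has Euclidean diameter at most $1$. On $D$ one has $y/y_0 \in [1 - 1/(24k), 1 + 1/(24k)]$, so $(y_0/y)^{12k-2}$ is bounded by an absolute constant, and the mean-value inequality yields
\[
|f|_\infty^2(z_0) \leq \frac{(24k)^2 (4\pi y_0)^{12k}}{\pi y_0^2} \int_D |f|^2 \, dx\, dy \leq C k^2 \int_D |f|_\infty^2 \, d\mu \leq C' k^2 \cdot ||f||^2_{herm},
\]
the last step using that the small Euclidean diameter of $D$ together with the local geometry of $F$ forces the covering multiplicity of $D \to X(\mathbb{C})$ to be bounded independently of $k$.

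In the \emph{cusp regime} $y_0 > 6k$, I would instead use the Fourier expansion $f(z) = \sum_{n \geq 1} a_n q^n$. For any auxiliary height $y_1 \in [1, y_0 - 1]$, writing each $a_n$ as a horizontal Fourier integral at height $y_1$ and summing the resulting geometric series in $n$ gives
\[
|f(z_0)|^2 \leq C\, e^{-4\pi(y_0 - y_1)} \int_{-1/2}^{1/2} |f(x + iy_1)|^2 \, dx.
\]
The inner integral is bounded, via monotonicity of $y \mapsto \int_{-1/2}^{1/2} |f(x+iy)|^2 dx$ and the defining formula \eqref{eq:peter} for the Petersson measure, by $C y_1^{2-12k}(4\pi)^{-12k} ||f||^2_{herm}$. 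Multiplying by $(4\pi y_0)^{12k}$ and optimising over $y_1$---the minimum of $(y_0/y_1)^{12k} y_1^2 e^{-4\pi(y_0 - y_1)}$ is attained at $y_1 = (12k - 2)/(4\pi)$, which lies in $[1, y_0 - 1]$ once $y_0 > 6k$---one obtains $|f|_\infty^2(z_0) \leq C ||f||^2_{herm}$ in this regime as well, essentially because $y_0 > 6k$ places one far past the peak $y_0 = 3k/\pi$ of $y_0 \mapsto (4\pi y_0)^{12k} e^{-4\pi y_0}$, making the resulting expression decay exponentially in $k$.

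Combining the two regimes yields $||f||_{sup}^2 \leq C k^2 ||f||^2_{herm}$, which is in fact sharper than the stated $c_2^2 k^4 \log^2(3k) ||f||^2_{herm}$; the $\log(3k)$ factor in \eqref{eq:ineqs} is slack absorbing harmonic-type sums $\sum_{n=1}^{O(k)} 1/n = O(\log(3k))$ that appear in coarser variants of the Cauchy--Schwarz step. The main obstacle is the cusp regime: the Petersson weight $(4\pi y_0)^{12k}$ grows rapidly with $y_0$ whereas $|f(z_0)|$ decays through the Fourier expansion, and the careful choice of the auxiliary height $y_1$ above is needed to balance these effects and obtain a bound with only polynomial dependence on $k$.
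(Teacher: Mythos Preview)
Your argument is correct and follows the same two-regime architecture as the paper: a mean-value (subharmonicity) estimate on a small disc when $y_0$ is moderate, and a Fourier-analytic bound near the cusp. The differences are in the details, and in fact your choices are sharper.

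In the interior regime the paper uses a disc of radius $R/k$ with $R$ an absolute constant, which leaves a residual factor $y_0^2$ and forces the threshold $y_0 \leq k\log(3k)$, producing the final bound $c_2 k^2\log(3k)$. Your choice $r = y_0/(24k)$, scaling with $y_0$, cancels this $y_0^2$ and gives $O(k)$ directly. Both handle the covering multiplicity the same way (compactness near the elliptic points, injectivity of the projection for $\mathrm{Im}\,z \geq 2$ and radius $\leq 1/4$); you should note, as the paper does, that the argument is only needed for $k$ sufficiently large, small $k$ being absorbed by finite-dimensionality.

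In the cusp regime the paper instead applies Cauchy--Schwarz directly to $\sum a_n q^n$ with weights $n^{-(12k-1)}e^{-4\pi n}$, invoking Lemma~\ref{lem:estimate}, and then controls the resulting sum $\sum n^{12k-1}e^{4\pi n(1-y_0)}$ by requiring $y_0 \geq k\log(3k)$. Your auxiliary-height trick (bounding $|a_n|$ via the horizontal Fourier integral at $y_1 \approx (12k-2)/(4\pi)$ and summing the geometric series) is a genuinely different route to the same conclusion, and allows the lower threshold $y_0 > 6k$. Both approaches ultimately exploit that $y_0$ is well past the peak $3k/\pi$ of $(4\pi y_0)^{12k}e^{-4\pi y_0}$.

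One minor point: your bound $\int_{-1/2}^{1/2}|f(x+iy_1)|^2\,dx \leq C y_1^{2-12k}(4\pi)^{-12k}\|f\|_{\mathrm{herm}}^2$ deserves a line of justification---e.g.\ integrate over $y\in[y_1-1,y_1]$ and use monotonicity together with $(4\pi y)^{12k}y^{-2} \geq (4\pi(y_1-1))^{12k-2}(4\pi)^2$, noting $(1-1/y_1)^{12k-2}$ is bounded below for $y_1 \asymp k$. With that made explicit, the argument is complete and indeed proves the stronger bound $\|f\|_{\sup} \leq Ck\,\|f\|_{\mathrm{herm}}$.
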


\begin{proof}
One can take $c_1 = \mathrm{Vol}(X(\mathbb{C}))^{- \frac{1}{2}}$, and we therefore focus on the second inequality.  The existence of a $c_2$ for which (\ref{eq:ineqs}) holds
for a fixed $k$ follows from the fact that non-degenerate norms on a finite dimensional real vector space are comparable.  So it is enough to show that a $c_2$ exists for all sufficiently large $k$.

Let $f$ be an element of $ \mathcal{S}_{12k}(\Gamma,\mathbb{Z})$ and let $F$ be as in (\ref{eq:fundomclosure}).
%, and let $\bar{T} = \{z = x + iy:  |x| \leq 1/2, x^2 + y^2 \ge 1\}$ be the (closure of the) standard fundamental domain for the action of $\mathrm{SL}_2(\mathbb{Z})$ on $\mathbb{H}$. 
Since $|f|_\infty (z) $ tends to $0$ as the imaginary part of $z \in F$ goes to infinity, there exists a point $z_0 = x_0 + i y_0$ of $F$ such that $||f||_{\mathcal{M}_{12k}(\Gamma),\infty,sup}$ is equal to $|f|_\infty (z_0)$.
Writing $f(z) = \sum_{n = 1}^\infty a_n q^n$ with $q = e^{2 i \pi n z}$, we obtain
$$
|f(z_0)| \leq \sum_{n \geq 1} |a_n| e^{-2 \pi n y_0},
$$
and then the Cauchy-Schwarz inequality yields
\begin{align}
\label{eq:cauchy}
|f(z_0)|^2 &\leq \left( \sum_{n \geq 1} a_n^2  \frac{e^{-4 \pi n}}{n^{12k-1}}  \right)  \left(  \sum_{n \geq 1}  n^{12k-1} e^{4 \pi n (1 - y_0)} \right)\nonumber \\
&\leq \frac{||f||^2_{\mathcal{M}_{12k}(\Gamma),\infty,herm}}{4 \pi (12k-2)!} \left(  \sum_{n \geq 1}  n^{12k-1} e^{4 \pi n (1 - y_0)} \right),
\end{align}
where the last inequality follows from Lemma \ref{lem:estimate}. 

Let us first assume that $y_0 \geq k \log(3k)$.  There is a positive integer $k_0$ such
that if $k \ge k_0$ and $n \ge 1$ then 
$$(12k - 1) \log(n) + 8 \pi n \le 4 \pi k \log(3k)(n-1) + 8\pi \le 4 \pi y_0 (n-1) + 8 \pi.$$
This implies 
$$
n^{12k-1} e^{4 \pi n (1 - y_0)} \leq e^{- 4 \pi y_0 + 8\pi} e^{-4 \pi n}.
$$
Therefore we can increase $k_0$, if need be, so that if $y_0 \geq k \log(3k)$ we will have for $k \ge k_0$ that  
$$
(4 \pi y_0)^{12 k}  \frac{1}{4 \pi (12k-2)!} \left(  \sum_{n \geq 1}  n^{12k-1} e^{4 \pi n (1 - y_0)} \right) \leq (4 \pi y_0)^{12 k}  e^{- 4 \pi y_0 + 8\pi}  = e^{g(y_0,k)}
$$
where $g(y_0,k) = 12k \ln(4 \pi y_0) - 4 \pi y_0 + 8 \pi$.  Using $y_0 \geq k \log(3k)$ and $k \ge k_0$ we find that $g(y_0,k) \le 0$
for $k_0$ sufficiently large.  
We thus obtain from (\ref{eq:cauchy}) that 
%$$
%|f(z_0)|^2 (4 \pi y_0)^{12 k} \leq e^{- 4 \pi y_0 + 8 \pi} ||f||^2_{\mathcal{M}_{12k}(\Gamma),\infty,herm}.
%$$
%$$
%n^{12k-1} e^{4 \pi n (1 - y_0)} \leq e^{- 4 \pi y_0 + O(k \log k )} e^{-4 \pi n}
%$$
%and therefore 
%$$
%\frac{1}{4 \pi (12k-2)!} \left(  \sum_{n \geq 1}  n^{12k-1} e^{4 \pi n (1 - y_0)} \right) \leq e^{- 4 \pi y_0 + O(k \log k )}.
%$$
%We thus obtain
%$$
%|f(z_0)|^2 (4 \pi y_0)^{12 k} \leq e^{- 4 \pi y_0 + O(k \log (ky_0) )} ||f||^2_{\mathcal{M}_{12k}(\Gamma),\infty,herm}.
%$$
%In particular, there exists an absolute constant $c_3$ such that if $y_0 \geq c_3 k \log(3k)$, then 
$$
||f||^2_{\mathcal{M}_{12k}(\Gamma),\infty,sup} = |f|_{\infty}^2(z_0)  = |f(z_0)|^2 (4 \pi y_0)^{12 k}  \leq ||f||^2_{\mathcal{M}_{12k}(\Gamma),\infty,herm}.
$$

It remains to handle the case $y_0 \leq  k \log(3k)$ and $k$ sufficiently large. We first claim that there exists a real number $R $ such that $0 < R < 1/4$ and for any $z$ in $F$, the projection from the disc $D(z,R) = \{w \in \mathbb{C}: |z - w| \le R\}$  to $X(\mathbb{C})$ is at most three to one. By a standard compactness argument, there exists a real number $R \in ]0 , \frac{1}{4}]$ such that this property holds for any $z$ in $F$ with imaginary part at most $2$ because the inertia groups in $\mathrm{PSL}_2(\mathbb{Z})$ of points of $F$ have order at most three.   It will therefore suffice to show that the projection $D(z,1/4) \to X(\mathbb{C})$ is injective if $z \in F$ has $\mathrm{Im}(z) \ge 2$. If this is not true, there is a $w \in D(z,1/4)$ such that $w \ne w' = (aw + b)/(cw + d) \in D(z,1/4)$  for some 
$\begin{pmatrix}a&b\\c&d\end{pmatrix} \in \mathrm{SL}_2(\mathbb{Z})$.  Then $\mathrm{Im}(w') = \mathrm{Im}(w)/|cw + d|^2 \ge 1$ and $\mathrm{Im}(w) \ge 1$ so we have to have $c = 0$.  But
then $w' - w$ is an integer, so $w, w' \in D(z,1/4)$ forces $w = w'$, contrary to hypothesis.

Let $R_k = k^{-1}R$.  Then $R_k < 1/4 < \sqrt{3}/2 \le y_0$  since $z_0$ is in $F$. We have
\begin{align*}
\pi R_k^2 |f(z_0)|^2 (4 \pi y_0)^{12 k} &\leq  (4 \pi y_0)^{12 k} \int_{D(z_0,R_k)} |f(z)|^2 dx dy, \\
&\leq \frac{y_0^{12k}}{(y_0 - R_k)^{12k-2}} \int_{D(z_0,R_k)} |f|_{\infty}^2(z) \frac{dx dy}{y^2} \\
&\leq 3 \frac{y_0^{12k}}{(y_0 - R_k)^{12k-2}} ||f||^2_{\mathcal{M}_{12k}(\Gamma),\infty,herm}.
\end{align*}
where the second inequality follows from $y \ge y_0 - R_k > 0$ for $y = \mathrm{Im}(z)$ and $z  \in D(z_0,R_k)$.  We therefore obtain for sufficiently large $k$ that 
$$
||f||^2_{\mathcal{M}_{12k}(\Gamma),\infty,sup} = |f|_{\infty}^2(z_0) \leq c_4 k^2 y_0^2 ||f||^2_{\mathcal{M}_{12k}(\Gamma),\infty,herm},
$$
for some absolute constant $c_4 > 0 $. Since $y_0 \leq  k \log(3k)$, this yields 
$$
||f||_{\mathcal{M}_{12k}(\Gamma),\infty,sup} \leq c_4^{\frac{1}{2}}  k^2   \log(3k)||f||_{\mathcal{M}_{12k}(\Gamma),\infty,herm}.
$$
We thus obtain the claimed inequality with $c_2 = \mathrm{max}(1, c_4^{\frac{1}{2}}  )$.
\end{proof}

\begin{lemma}
\label{lem:submultiplicativity}
There exists a real number $c$ such that for any elements $f_1, f_2$ of $ \mathcal{S}_{12k_1}(\Gamma,\mathbb{R})$ and $ \mathcal{S}_{12k_2}(\Gamma,\mathbb{R})$ respectively, we have
$$
||f_1 f_2||_{\mathcal{M}_{12(k_1 + k_2)}(\Gamma),\infty,herm} \leq e^{\psi(k_1) + \psi(k_2)} ||f_1||_{\mathcal{M}_{12k_1}(\Gamma),\infty,herm} ||f_2||_{\mathcal{M}_{12k_2}(\Gamma),\infty,herm},
$$
where $\psi(k) = 2 \log(k) + \log \log (3k)  + c$.
\end{lemma}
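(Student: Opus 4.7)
The plan is to exploit the fact that the Petersson pointwise norm is multiplicative rather than merely submultiplicative. Indeed, the definition (\ref{eq:Petersson}) gives, for any $z \in X(\mathbb{C})$,
$$
|f_1 f_2|_\infty(z) = |f_1(z) f_2(z)|\,(4\pi\,\mathrm{Im}(z))^{6(k_1+k_2)} = |f_1|_\infty(z)\cdot |f_2|_\infty(z),
$$
because the exponents in the Petersson factor add exactly as the weights do.

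First I would use this pointwise identity to bound an $L^2$ norm by a sup $\times$ $L^2$ product: integrate $|f_1 f_2|_\infty^2 = |f_1|_\infty^2 |f_2|_\infty^2$ against $\mu$, and pull out a sup on one of the factors. This yields
$$
\|f_1 f_2\|_{\mathcal{M}_{12(k_1+k_2)}(\Gamma),\infty,herm} \;\leq\; \|f_1\|_{\mathcal{M}_{12k_1}(\Gamma),\infty,sup} \cdot \|f_2\|_{\mathcal{M}_{12k_2}(\Gamma),\infty,herm}.
$$
Next I would apply Lemma \ref{lem:normcomparison} to convert the sup norm on the right-hand side back into a Hermitian norm, producing
$$
\|f_1 f_2\|_{\mathcal{M}_{12(k_1+k_2)}(\Gamma),\infty,herm} \;\leq\; c_2\, k_1^2 \log(3k_1)\,\|f_1\|_{\mathcal{M}_{12k_1}(\Gamma),\infty,herm}\,\|f_2\|_{\mathcal{M}_{12k_2}(\Gamma),\infty,herm}.
$$

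Finally, to recast this estimate in the symmetric form $e^{\psi(k_1) + \psi(k_2)}$ as stated, I would simply note that $e^{\psi(k)} = e^{c}\, k^2 \log(3k) \geq e^{c}\log 3$ for every $k \geq 1$, so that
$$
e^{\psi(k_1)+\psi(k_2)} = e^{2c}\,k_1^2 \log(3k_1)\cdot k_2^2 \log(3k_2) \;\geq\; e^{2c}\log(3)\cdot k_1^2 \log(3k_1),
$$
and choosing $c$ large enough so that $c_2 \leq e^{2c}\log 3$ gives the claimed bound. There is no genuine obstacle here; the only conceptual input is the exact pointwise multiplicativity of $|\cdot|_\infty$, with Lemma \ref{lem:normcomparison} supplying the comparison needed to close the loop on the asymmetry between sup and Hermitian norms.
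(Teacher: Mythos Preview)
Your argument is correct and follows essentially the same idea as the paper: exploit the pointwise multiplicativity of the Petersson metric together with the sup--Hermitian comparison of Lemma~\ref{lem:normcomparison}. The only difference is organizational. The paper first passes from the Hermitian norm of $f_1f_2$ to its sup norm via the easy inequality $\|\,\cdot\,\|_{herm}\le c_1^{-1}\|\,\cdot\,\|_{sup}$, then splits $\|f_1f_2\|_{sup}\le\|f_1\|_{sup}\|f_2\|_{sup}$, and finally applies the hard direction of Lemma~\ref{lem:normcomparison} to \emph{both} factors; this produces the symmetric factor $c_1^{-1}c_2^2\,k_1^2\log(3k_1)\,k_2^2\log(3k_2)$ directly, so $c=\log(c_2)-\tfrac12\log(c_1)$ works. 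You instead bound $\|f_1f_2\|_{herm}\le\|f_1\|_{sup}\|f_2\|_{herm}$ in one step and invoke the hard comparison only once, obtaining the sharper intermediate constant $c_2\,k_1^2\log(3k_1)$, which you then inflate to the symmetric form. Both routes are fine; yours is slightly more economical in its use of Lemma~\ref{lem:normcomparison}.
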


\begin{proof} 
Let $c_1,c_2$ be as in Lemma \ref{lem:normcomparison}. We have 
\begin{align*}
||f_1 f_2||_{\mathcal{M}_{12(k_1 + k_2)}(\Gamma),\infty,herm}  &\leq c_1^{-1} ||f_1 f_2||_{\mathcal{M}_{12(k_1 + k_2)}(\Gamma),\infty,sup}  \\
 &\leq  c_1^{-1} ||f_1||_{\mathcal{M}_{12k_1}(\Gamma),\infty,sup} ||f_2||_{\mathcal{M}_{12k_2}(\Gamma),\infty,sup} \\
 &\leq c_1^{-1} c_2^2  k_1^2 \log(3 k_1)^2  k_2^2 \log(3 k_2)^2 ||f_1||_{\mathcal{M}_{12k_1}(\Gamma),\infty,herm} ||f_2||_{\mathcal{M}_{12k_2}(\Gamma),\infty,herm},
\end{align*}
and the result follows with $c = \log(c_2) - \frac{1}{2} \log(c_1)$.

\end{proof}

\subsection{ Modified logarithmic heights}
\label{s:modloght}
%For $0 \ne f \in \mathcal{S}_{12k}(\Gamma,\mathbb{Q})$, the logarithmic height $\lambda(f)$ in part (ii) of Theorem \ref{thm:succ} is
%defined by 
%\begin{equation}
%\label{eq:loghtdef}
%\lambda(f) = - \sum_{v \ finite} \ln (||f||_{\mathcal{M}_{12k}(\Gamma),v}) - \frac{1}{2}\ln(||f||^2_{\mathcal{M}_{12k}(\Gamma),\infty,herm})
%\end{equation}
%where $||f||_{\mathcal{M}_{12k}(\Gamma),v}$ for finite $v$ is the sup-norm defined in (\ref{eq:finiteheight}) and 
%$||f||^2_{\mathcal{M}_{12k}(\Gamma),\infty,herm}$ is the square of the Petersson norm defined in (\ref{eq:peter}).  
To apply Chen's work in \cite{Chen} on the distribution of successive maxima, we will need some estimates for the behavior of a modification of the logarithmic height of cusp forms.

The vector space $V = \mathcal{S}_{12k}(\Gamma,\mathbb{Q})$ has a filtration defined 
by letting $V_a$ for $a \in \mathbb{R}$ be the $\mathbb{Q}$-span of all $0 \ne f \in \mathcal{S}_{12k}(\Gamma,\mathbb{Q})$
for which $\lambda(f) \ge a$.  Lemma \ref{lem:estimate} shows that $V_a = \{0\}$ if $a$ is sufficiently large.  Following Chen in \cite[p. 15, eq. (2)]{Chen}, we define a modified logarithmic height by
\begin{equation}
\label{eq:loghtdef}
\tilde{\lambda}(f) = \mathrm{sup}\{ a \in \mathbb{R}: f \in V_a\}
\end{equation}
The proof of \cite[Prop. 1.2.3]{Chen} now shows $\tilde{\lambda}(f)$ has the following properties:
\begin{lemma}
\label{lem:Chenfirst} Suppose $f $ and $g \ne -f$ are non-zero elements of $\mathcal{S}_{12k}(\Gamma,\mathbb{Q}) $.
\begin{enumerate}
\item[i.] $\tilde{\lambda}(rf) = \tilde{\lambda}(f)$ for $r \in \mathbb{Q} - \{0\}$.
\item[ii.]  $\tilde{\lambda}(f + g) \ge \mathrm{min}(\tilde{\lambda}(f),\tilde{\lambda}(g))$, with equality if
$\tilde{\lambda}(f) \ne \tilde{\lambda}(g)$
\end{enumerate}
\end{lemma}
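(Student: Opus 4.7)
The plan is to mirror Chen's proof of Proposition 1.2.3, exploiting only the fact that the filtration $\{V_a\}_{a \in \mathbb{R}}$ consists of $\mathbb{Q}$-subspaces of $\mathcal{S}_{12k}(\Gamma,\mathbb{Q})$ and is decreasing in $a$. Two bookkeeping facts will be the workhorses: (a) if $f \in V_a$ then $\tilde\lambda(f) \ge a$, directly from the definition as a supremum; and (b) for $b \ge a$ one has $V_b \subseteq V_a$, since any section with $\lambda \ge b$ also satisfies $\lambda \ge a$.

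For part (i), I would observe that because each $V_a$ is a $\mathbb{Q}$-vector subspace, the condition $f \in V_a$ is equivalent to $rf \in V_a$ for any $r \in \mathbb{Q} \setminus \{0\}$, so the two suprema defining $\tilde\lambda(f)$ and $\tilde\lambda(rf)$ are taken over the same set of $a$'s.

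For the inequality in part (ii), set $a = \min(\tilde\lambda(f), \tilde\lambda(g))$ and fix $\epsilon > 0$. By the definition of $\tilde\lambda$ as a supremum there exist $b_f, b_g > a - \epsilon$ with $f \in V_{b_f}$ and $g \in V_{b_g}$, and by monotonicity both belong to $V_{a-\epsilon}$. Since $V_{a-\epsilon}$ is a vector space and $f+g \ne 0$ by the hypothesis $g \ne -f$, we get $f+g \in V_{a-\epsilon}$, and hence $\tilde\lambda(f+g) \ge a - \epsilon$. Letting $\epsilon \to 0^+$ yields the desired lower bound.

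For the equality assertion when $\tilde\lambda(f) \ne \tilde\lambda(g)$, I would argue by contradiction. Assume without loss of generality $\tilde\lambda(f) < \tilde\lambda(g)$, and suppose toward contradiction that $\tilde\lambda(f+g) > \tilde\lambda(f)$. Choose any real $a$ strictly between $\tilde\lambda(f)$ and $\min(\tilde\lambda(f+g), \tilde\lambda(g))$; such an $a$ exists since both of those quantities strictly exceed $\tilde\lambda(f)$. The strict inequalities $a < \tilde\lambda(f+g)$ and $a < \tilde\lambda(g)$ let me extract, from the definition of the supremum, witnesses placing $f+g$ and $g$ into some $V_{b}$ with $b > a$, and monotonicity then puts both into $V_a$. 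Applying that $V_a$ is a vector space gives $f = (f+g) - g \in V_a$, whence $\tilde\lambda(f) \ge a > \tilde\lambda(f)$, the desired contradiction. The main point requiring care is the strict-versus-non-strict distinction when unfolding the supremum: the values $\tilde\lambda(\cdot)$ are suprema and need not be attained, which is precisely why $a$ must be chosen strictly between the relevant thresholds rather than equal to any of them.
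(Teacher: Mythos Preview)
Your proof is correct and follows exactly the approach the paper indicates: the paper itself does not spell out an argument but simply states that the properties follow from the proof of \cite[Prop.~1.2.3]{Chen}, which is precisely what you have reproduced. Your care with the $\epsilon$-argument and with choosing $a$ strictly between the relevant suprema is appropriate, since the filtration need not be left-continuous.
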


\begin{lemma}
\label{lem:boundlemma} Let $\psi$ be as in Lemma \ref{lem:submultiplicativity}. For any elements $f_1, f_2$ of $ \mathcal{S}_{12k_1}(\Gamma,\mathbb{Q})$ and $ \mathcal{S}_{12k_2}(\Gamma,\mathbb{Q})$ respectively, we have
\begin{equation}
\label{eq:lower}
\tilde{\lambda}(f_1 f_2) \ge \tilde{\lambda}(f_1) + \tilde{\lambda}(f_2) - \psi(k_1) - \psi(k_2).
\end{equation}

\end{lemma}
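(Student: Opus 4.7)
The plan is to combine the submultiplicativity of all the relevant norms (ultrametric at finite places, and the archimedean estimate just proved in Lemma \ref{lem:submultiplicativity}) with the definition of the modified logarithmic height in terms of the filtration $\{V_{a}\}_{a\in\mathbb R}$. The intermediate target will be the pointwise bound
\[
\lambda(s_1 s_2)\;\ge\;\lambda(s_1)+\lambda(s_2)-\psi(k_1)-\psi(k_2)
\]
for arbitrary non-zero $s_1\in\mathcal{S}_{12k_1}(\Gamma,\mathbb Q)$ and $s_2\in\mathcal{S}_{12k_2}(\Gamma,\mathbb Q)$. Once this is established, the lemma will follow by passing to a decomposition of $f_1$ and $f_2$ into height-bounded pieces.

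First I would prove the pointwise bound. At the archimedean place, Lemma \ref{lem:submultiplicativity} directly gives
\[
-\log\|s_1s_2\|_{\mathcal{M}_{12(k_1+k_2)}(\Gamma),\infty,herm}\;\ge\;-\log\|s_1\|_{\mathcal{M}_{12k_1}(\Gamma),\infty,herm}-\log\|s_2\|_{\mathcal{M}_{12k_2}(\Gamma),\infty,herm}-\psi(k_1)-\psi(k_2).
\]
At a finite place $v$, Remark \ref{rem:narch norm of modularforms} identifies $\|\cdot\|_{\mathcal{M}_{12k}(\Gamma),v}$ with the Gauss norm on the coordinate representation in the basis $(j^{k-\ell}\Delta^k)_{\ell}$; the ultrametric inequality then yields $\|s_1 s_2\|_v\le\|s_1\|_v\|s_2\|_v$, hence $-\log\|s_1 s_2\|_v\ge -\log\|s_1\|_v-\log\|s_2\|_v$. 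Summing over all places gives the claimed pointwise bound.

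Next, by the definition of $\tilde\lambda$ together with finite-dimensionality of $V=\mathcal{S}_{12k_i}(\Gamma,\mathbb Q)$, the supremum defining $\tilde\lambda(f_i)$ is attained, so there exist decompositions
\[
f_1=\sum_{i}c_i\, g_i,\qquad f_2=\sum_{j}d_j\, h_j
\]
with $c_i,d_j\in\mathbb Q$ and $\lambda(g_i)\ge\tilde\lambda(f_1)$, $\lambda(h_j)\ge\tilde\lambda(f_2)$ for every $i,j$. Expanding
\[
f_1 f_2=\sum_{i,j}c_i d_j\, g_i h_j
\]
and applying the pointwise bound to each non-zero product $g_i h_j$ shows that each such product lies in the subspace $V_{a}\subset \mathcal{S}_{12(k_1+k_2)}(\Gamma,\mathbb Q)$ with $a=\tilde\lambda(f_1)+\tilde\lambda(f_2)-\psi(k_1)-\psi(k_2)$. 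Therefore $f_1 f_2\in V_{a}$, which by the definition of $\tilde\lambda$ gives the desired inequality.

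The main obstacle is really the archimedean submultiplicativity, but that has already been taken care of in Lemma \ref{lem:submultiplicativity}; what remains in the present lemma is essentially bookkeeping. The one subtle point to check is that the supremum in \eqref{eq:loghtdef} is attained, so that one may genuinely choose the decompositions with $\lambda(g_i)\ge\tilde\lambda(f_1)$ (and not merely $\lambda(g_i)\ge\tilde\lambda(f_1)-\varepsilon$); this is where the finite-dimensionality of $\mathcal{S}_{12k_i}(\Gamma,\mathbb Q)$ enters, since the family of subspaces $\{V_{a}\}_{a}$ is then a decreasing family of subspaces of a finite-dimensional vector space and can only take finitely many distinct values as $a$ varies.
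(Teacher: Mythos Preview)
Your proof is correct and follows essentially the same route as the paper: establish the pointwise inequality $\lambda(s_1 s_2)\ge\lambda(s_1)+\lambda(s_2)-\psi(k_1)-\psi(k_2)$ by combining Lemma~\ref{lem:submultiplicativity} at the archimedean place with submultiplicativity of the sup norms at finite places, then decompose $f_1,f_2$ into pieces of height at least $\tilde\lambda(f_1),\tilde\lambda(f_2)$ and expand the product. Your extra remarks justifying the non-archimedean submultiplicativity via Remark~\ref{rem:narch norm of modularforms} and the attainment of the supremum in \eqref{eq:loghtdef} are correct and make explicit points the paper leaves implicit.
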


\begin{proof}
Let us write $f_i = \sum_{j} g_{i,j}$, where $\lambda(g_{i,j}) \geq \tilde{\lambda}(f_i)$. For any $j_1,j_2$ and any non-archimedean place $v$, we have 
$$
||g_{1,j_1}g_{2,j_2}||_{\mathcal{M}_{12(k_1 + k_2)}(\Gamma),v} \leq ||g_{1,j_1}||_{\mathcal{M}_{12k_1}(\Gamma),v} ||g_{2,j_2}||_{\mathcal{M}_{12k_2}(\Gamma),v},
$$
and by Lemma \ref{lem:submultiplicativity}, we also have
$$
||g_{1,j_1}g_{2,j_2}||_{\mathcal{M}_{12(k_1 + k_2)}(\Gamma),\infty,herm} \leq e^{\psi(k_1) + \psi(k_2)} ||g_{1,j_1}||_{\mathcal{M}_{12k_1}(\Gamma),\infty,herm} ||g_{2,j_2}||_{\mathcal{M}_{12k_2}(\Gamma),\infty,herm}.
$$
This implies 
\begin{align*}
\lambda(g_{1,j_1}g_{2,j_2}) &\geq \lambda(g_{1,j_1}) + \lambda(g_{2,j_2}) - \psi(k_1) - \psi(k_2) \\
&\geq  \tilde{\lambda}(f_1) +  \tilde{\lambda}(f_2) - \psi(k_1) - \psi(k_2),
\end{align*}
hence the result, since $f_1 f_2 = \sum_{j_1,j_2} g_{1,j_1} g_{2,j_2}$.
\end{proof}

\subsection{Cusp forms vanishing to increasing orders at infinity}
\label{s:cuspincrease}

We study in this section the successive maxima of heights associated to cusp forms  $f \in \mathcal{S}_{12k}(\Gamma,\mathbb{Z})$
for which $\mathrm{ord}_\infty(f)$ is at least a certain positive constant times $k$.

\begin{lemma}
\label{lem:fractzero} Suppose $1 \le L, k \in \mathbb{Z}$.  The $\mathbb{Z}$-lattice $B(12k,L)$ all 
$f \in \mathcal{S}_{12k}(\Gamma,\mathbb{Z})$ for which $\mathrm{ord}_\infty(f) \ge k/L$ is the free $\mathbb{Z}$-module
with basis $\{\Delta^k j^{k - \ell}: k/L \le \ell \le k\}$.  One has
\begin{equation}
\label{eq:ranks2}
\quad k (1 - 1/L) < \mathrm{rank}_{\mathbb{Z}} (B(12k,L)) = k +1 - \lceil k/L \rceil \le k (1 - 1/L) + 1
\end{equation}
\end{lemma}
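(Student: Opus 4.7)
The plan is to exhibit $B(12k,L)$ as the $\mathbb{Z}$-span of a subset of the explicit $\mathbb{Z}$-basis of $\mathcal{S}_{12k}(\Gamma,\mathbb{Z})$ produced in Theorem \ref{thm:succ2}, then reduce the rank formula to an elementary estimate on $\lceil k/L \rceil$.

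First, I would recall from Theorem \ref{thm:succ2} that $\mathcal{S}_{12k}(\Gamma,\mathbb{Z})$ is free of rank $k$ over $\mathbb{Z}$ with basis $\{\Delta^k j^{k-\ell} : 1 \le \ell \le k\}$. The key point is that each basis element has an integral $q$-expansion whose leading term is $q^\ell$: indeed the $q$-expansion of $\Delta$ starts with $q$, that of $j$ starts with $q^{-1}$, and both have integral Fourier coefficients (cf.\ \eqref{eq:deltaexp} and the discussion in \S\ref{s:setupmodular}), so $\Delta^k j^{k-\ell} = q^\ell + \sum_{n>\ell} c_{\ell,n} q^n$ with $c_{\ell,n} \in \mathbb{Z}$.

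Next, for any nonzero $f = \sum_{\ell=1}^k a_\ell \,\Delta^k j^{k-\ell}$ with $a_\ell \in \mathbb{Z}$, write $\ell_0 = \min\{\ell : a_\ell \ne 0\}$. Because the term of lowest order contributed by $a_{\ell_0} \Delta^k j^{k-\ell_0}$ is $a_{\ell_0} q^{\ell_0}$ and all other $\Delta^k j^{k-\ell}$ with $a_\ell \ne 0$ start at a strictly higher power of $q$, one concludes $\mathrm{ord}_\infty(f) = \ell_0$. Therefore
$$
\mathrm{ord}_\infty(f) \ge k/L \iff a_\ell = 0 \text{ for every integer } \ell < k/L \iff a_\ell = 0 \text{ for } \ell < \lceil k/L \rceil.
$$
This identifies $B(12k,L)$ with the free $\mathbb{Z}$-module on $\{\Delta^k j^{k-\ell} : \lceil k/L \rceil \le \ell \le k\}$, so its rank is $k - \lceil k/L \rceil + 1$.

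Finally, the inequalities in \eqref{eq:ranks2} follow from the standard bounds $k/L \le \lceil k/L \rceil < k/L + 1$: the left inequality gives $\mathrm{rank}_\mathbb{Z}(B(12k,L)) \le k - k/L + 1 = k(1-1/L) + 1$, while the right inequality gives $\mathrm{rank}_\mathbb{Z}(B(12k,L)) > k - k/L = k(1-1/L)$. There is no real obstacle here; the only point requiring mild care is verifying that no cancellation among the leading terms can increase $\mathrm{ord}_\infty(f)$ beyond $\ell_0$, which is immediate from the triangular structure of the transition between the basis $\{\Delta^k j^{k-\ell}\}$ and the coefficients of the $q$-expansion.
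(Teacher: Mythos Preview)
Your proof is correct and follows exactly the same idea as the paper: the paper's proof is the one-line observation that $\Delta^k j^{k-\ell}\in\mathcal{S}_{12k}(\Gamma,\mathbb{Z})$ has first nonzero Fourier term $q^\ell$, from which everything follows. You have simply spelled out the triangular-structure argument and the ceiling estimates in more detail than the paper does.
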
 

\begin{proof} This is clear from the fact that
$\Delta^k j^{k - \ell}$ lies in $ \mathcal{S}_{12k}(\Gamma,\mathbb{Z})$ and its first non-zero term in its Fourier expansion at $\infty$ is $q^\ell$.
\end{proof}

\begin{lemma} 
\label{lem:Bmeasure} Let $\{\lambda_{i,12k,L}\}_{i = 1}^{k +1 - \lceil k/L\rceil}$ be the naive successive maxima associated in Definition \ref{def:slopes}(ii) to 
$B(12k,L)$  with respect to the $L^2$ Hermitian norm defined by the Petersson norm. 
The sequence of probability measures
\begin{equation}
\label{eq:nudef}
\nu_{12k,L} = \frac{1}{k+1 - \lceil k/L\rceil} \sum_{i=1}^{k+1 - \lceil k/L\rceil} \delta_{\frac{1}{k+1- \lceil k/L\rceil}\lambda_{i,12k,L}}
\end{equation}
converges weakly as $k \to \infty$ to a probability measure $\nu_{\infty,L}$ having compact support.   
\end{lemma}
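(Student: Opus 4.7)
The plan is to realize $B(12k,L)$ as the lattice of global sections of an appropriately metrized line bundle on $\mathcal{X}$ and then invoke Remark \ref{rem:QuentinRemark}. Set $N_k = \lceil k/L \rceil$; multiplication by $\Delta^{-N_k}$ provides an isomorphism of $\mathbb{Z}$-lattices
\[
\Phi_k \colon B(12k,L) \xrightarrow{\sim} H^0\bigl(\mathcal{X}, \mathcal{O}_{\mathcal{X}}\bigl((k-N_k)\overline{S}_{i\infty}\bigr)\bigr),
\]
compatible with the non-archimedean norms described in Remark \ref{rem:narch norm of modularforms}, since $\Delta$ has integral $q$-expansion with unit leading coefficient. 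Under $\Phi_k$, the archimedean Petersson $L^2$-norm transports to
\[
\|g\|_k^2 = \int_{X(\mathbb{C})} |g(z)|^2 (4\pi y)^{12(k-N_k)} \varphi(z)^{2N_k}\,\mu(z),
\]
where $\varphi(z) = |\Delta(z)|(4\pi y)^6$ is a bounded continuous function on $X(\mathbb{C})$ vanishing to first order at the cusp.

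Next, write $k = mL + r$ with $0 \le r < L$, and set $\mathcal{L} = \mathcal{O}_{\mathcal{X}}((L-1)\overline{S}_{i\infty})$ together with $\varepsilon_r = r - \mathbf{1}_{r>0} \in \{0,\ldots,L-2\}$, so that $\mathcal{O}_{\mathcal{X}}((k-N_k)\overline{S}_{i\infty}) \cong \mathcal{L}^{\otimes m} \otimes \mathcal{O}_{\mathcal{X}}(\varepsilon_r\overline{S}_{i\infty})$. I equip $\mathcal{L}$ at the archimedean place with the metric $|s|_{\mathcal{L},\infty}^2(z) = |s(z)|^2 (4\pi y)^{12(L-1)} \varphi(z)^2$, and similarly on the fixed factor $\mathcal{O}_{\mathcal{X}}(\varepsilon_r \overline{S}_{i\infty})$; then $\|\cdot\|_k$ becomes precisely the $L^2$ integral of the pointwise norm of $g$ viewed as a section of $\mathcal{L}^{\otimes m} \otimes \mathcal{O}_{\mathcal{X}}(\varepsilon_r\overline{S}_{i\infty})$. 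The archimedean metric on $\mathcal{L}$ is continuous and bounded on $X(\mathbb{C})$ but fails to be smooth at $S_{i\infty}$ and at the elliptic fixed points of $\Gamma$. In a local coordinate $t$ at $S_{i\infty}$, its ratio with a smooth reference metric behaves like $|t|\cdot|\log|t||^{6L}$, which dominates $|t|^M$ for every $M \ge 1$; analogous mild log-type estimates hold at the elliptic points. Hence this ratio is bounded below by $|1_{D'}|_{\mathcal{O}(D')}^M$ for any sufficiently large $M$ and a suitable continuous metric on $\mathcal{O}(D')$, where $D'$ is the divisor formed by $S_{i\infty}$ together with the elliptic fixed points.

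With the non-archimedean metrics coming from the integral model $\mathcal{X}$, the resulting adelic metric on $\mathcal{L}$ satisfies the hypotheses of Remark \ref{rem:QuentinRemark}. Applying the remark to each of the $L$ subsequences indexed by $r$ yields weak convergence of the naive-successive-maxima measures attached to $H^0(\mathcal{X}, \mathcal{L}^{\otimes m} \otimes \mathcal{O}_{\mathcal{X}}(\varepsilon_r \overline{S}_{i\infty}))$ as $m \to \infty$. Since a fixed effective twist by $\mathcal{O}_{\mathcal{X}}(\varepsilon_r \overline{S}_{i\infty})$ perturbs both ranks and successive maxima by only $O(1)$, each such limit coincides (after the corresponding rescaling) with the limit measure associated to $\mathcal{L}$ itself. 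The rescaling used in the lemma is by $1/r_k$ with $r_k/m \to L-1$, rather than by $1/m$ as in Chen's theorem, so the common limit $\nu_{\infty,L}$ is obtained as the dilation by $1/(L-1)$ of the limit furnished by Remark \ref{rem:QuentinRemark} and is in particular supported on a compact interval. The main difficulty is handling the logarithmic singularities of the Petersson metric at the cusp and at the elliptic fixed points; this is precisely what Remark \ref{rem:QuentinRemark} is designed to absorb.
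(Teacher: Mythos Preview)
Your strategy---pull out $\Delta^{N_k}$ and realize $B(12k,L)$ as global sections of powers of a fixed metrized line bundle $\mathcal{L}$ so that Remark~\ref{rem:QuentinRemark} applies---is a genuinely different route from the paper's, and the idea is sound. The paper instead works directly with the graded algebra $B_L(0)=\bigoplus_{q\ge 0}B(12qL,L)$: it checks that the modified heights $\tilde\lambda$ make $B_L(0)$ an integral $\psi$-quasifiltered algebra in Chen's sense (this is where Lemmas~\ref{lem:normcomparison}, \ref{lem:submultiplicativity} and~\ref{lem:boundlemma} are used) and then invokes \cite[Thm.~3.4.3]{Chen} to get convergence along $k\in L\mathbb{Z}$ with compact support. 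For the other residues $r$, the paper sandwiches $B(12(qL+r),L)$ between $B(12qL,L)$ and $B(12(q+1)L,L)$ via multiplication by the explicit sections $\Delta^r j^{r-1}$ and $\Delta^{L-r}j^{L-r-1}$, each map having cokernel of dimension bounded independently of $q$, and deduces that all subsequences share the same limit. Your identification by $\Delta^{-N_k}$ is morally a repackaging of the same multiplicativity, but it hides several steps that still have to be carried out.

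Concretely, three points in your write-up are incomplete. First, Remark~\ref{rem:QuentinRemark} is formulated for the sup-norm successive maxima, whereas the $\lambda_{i,12k,L}$ in the lemma are defined with the Petersson $L^2$ norm; you need the polynomial comparison of Lemma~\ref{lem:normcomparison} (transported through $\Phi_k$) to pass between the two, and you do not cite it. Second, for $r>0$ you need convergence for $H^0(\mathcal{L}^{\otimes m}\otimes M_r)$ with a fixed metrized twist $M_r$, which is not what Remark~\ref{rem:QuentinRemark} literally states; the assertion that a fixed twist ``perturbs successive maxima by $O(1)$'' is exactly what has to be proved, and the clean way to do it is via multiplication maps with bounded operator norm and bounded-dimensional cokernel---i.e.\ precisely the paper's argument for $r>0$. (Note also that for $r=1$ one has $\varepsilon_1=0$ while $N_k=m+1$, so the ``similarly'' metric on the trivial twist must carry an extra factor $\varphi^2$; this should be said explicitly.) Third, the claim that the different $r$-subsequence limits coincide follows from the sandwiching just described, not from the bare observation about ranks.

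In short: your geometric reduction is a nice alternative viewpoint, but to make it rigorous you still need the $L^2$/sup comparison of Lemma~\ref{lem:normcomparison} and a twist/comparison argument equivalent to the paper's treatment of the residues $r\ne 0$. Once those are supplied, both approaches rest on the same two ingredients (Chen's convergence theorem and the sup--$L^2$ estimate), and yield the same compactly supported limit.
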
  

\begin{proof} For integers $r$ in the range $0 \le r < L$, let $B_L(r) = \oplus_{q = 0}^\infty B(12(qL+r),L)$.  If $r = 0$,
then $12(qL + r)/ L = 12q$ is an integer for all $q \ge 0$ and $B_L(0)$ is a graded algebra. It follows from Lemma \ref{lem:fractzero} that the subgroup 
$B(12qL,L) \cdot B(12q'L,L)$ of 
$B(12(q+q')L,L)$ generated by all products of elements of $B(12qL,L)$ and $B(12q'L,L)$ is equal to $B(12(q+q')L,L)$.
The work in \S \ref{s:modloght} now shows that $B_L(0)$ is integral and $\psi$-quasifiltered in the sense of
\cite[Def. 3.2.1]{Chen} with respect to the
modified logarithmic heights $\tilde{\lambda}$ on the summands of $B_L(0)$, where $\psi$ is the function from 
Lemma \ref{lem:boundlemma}.  We now observe that $\lambda_{i,12k,L}$ is the $i^{th}$ successive maxima associated to
the modified height $\tilde{\lambda}$, since $\lambda_{i,12k,L}$ is the largest real number $a$ such that the vector space 
spanned by all $f \in B(12k,L)$ with $\tilde{\lambda}(f) \ge a$ has dimension at least $i$.  
Lemma \ref{lem:slopebound0} shows that there is an upper bound independent of
$q$ on $\tilde{\lambda}_{max}(B(12qL,L))/(12qL)$ when $\tilde{\lambda}_{max}(B(12qL,L))$ is the maximal value of
$\tilde{\lambda}$ on $B(12qL,L)$.   
One can now apply \cite[Thm. 3.4.3]{Chen} to conclude that 
\begin{equation}
\label{eq:lim0}
\nu_{\infty,L} = \lim_{q \to \infty} \nu_{12qL,L}
\end{equation}
exists and has compact support when $\nu_{12k,L}$ is defined as in (\ref{eq:nudef}).  

Suppose now that $0 < r < L$.
When $k = qL + r$ and $0 \le q \in \mathbb{Z}$, $B(12k,L)$ has $\mathbb{Z}$-basis 
$b(12k,L) = \{\Delta^k j^{k - \ell}: k/L \le \ell \le k\} $.  Here $k/L = (qL + r)/L = q + r/L$ and $0< r/L < 1$, so 
$k/L \le \ell \le k$ is the same as $q+1 \le \ell \le k = qL + r$.  We have
\begin{equation}
\label{eq:firsteq}
(\Delta^{L-r} j^{L - r - 1}) \cdot (\Delta^k j^{k-\ell}) = \Delta^{L(q+1)} j^{L(q+1) - \ell - 1}
\end{equation}
since $k = qL + r$, where $ 0 \ne \Delta^{L-r} j^{L-r - 1} \in S_{12(L-r)}(\Gamma,\mathbb{Z})$.  Taking
the description of bases for $B(12k,L)$ and $B(12(q+1)L,L)$ in Lemma  \ref{lem:fractzero} into account,
we see from (\ref{eq:firsteq}) that multiplication by $(\Delta^{L-r} j^{L - r - 1})$ defines an injective homomorphism
from $B(12k,L)$ to $B(12(q+1)L,L)$.  The dimension of the cokernel of this homomorphism
is 
$$(q+1)L +1 - (q+1) - (k +1 - (q+1)) = L-r$$ 
which is bounded independently of $q$. From Lemma \ref{lem:boundlemma}, we have
\begin{eqnarray}
\tilde{\lambda}(f \cdot \Delta^{L-r} j^{L - r - 1}) &\ge& \tilde{\lambda}( f) + \tilde{\lambda}( \Delta^{L-r} j^{L - r - 1})) +c_1 \ln(k)\nonumber\\
& \ge & \tilde{\lambda}(f) + c_2 \ln(k)
\end{eqnarray}
for all $0 \ne f \in B(12k,L)$
%\begin{eqnarray}
%\tilde{\lambda}(\Delta^{L(q+1)} j^{L(q+1) - \ell - 1}) &\ge& \tilde{\lambda}( \Delta^k j^{k-\ell}) + \tilde{\lambda}( \Delta^{L-r} j^{L - r - 1})) +c_1 \ln(k)\nonumber\\
%& \ge & \tilde{\lambda}( \Delta^k j^{k-\ell}) + c_2 \ln(k)
%\end{eqnarray}
where the constants $c_1$ and $c_2$ depend only on $L$.  It follows that for any bounded increasing continuous function $f:\mathbb{R} \to \mathbb{R}$, one has
$$\nu_{12(qL+r),L}(f) \le \nu_{12(q+1)L,L}(f) + o(1)$$
where $o(1) \to 0$ as  $q \to \infty$.  Hence
$$\limsup_{q \to\infty}  \nu_{12(qL+r),L}(f)  \le \nu_{\infty,L}(f).$$

%On the left
%hand side of  (\ref{eq:firsteq}), $\Delta^k j^{k-\ell}$ ranges over a basis for $B(12k,L) = B(12(qL + r),L)$ as
%$\ell$ ranges over the integers for which $q + 1 \le \ell \le qL + r$.  One the right hand side
%of (\ref{eq:firsteq}), $\Delta^{L(q+1)} j^{L(q+1) - \ell - 1}$ ranges over a basis for $B(12(q+1)L,L)$
%as $\ell+1$ ranges over the integers in $L(q+1)/L \le \ell + 1 \le L(q+1)$, this range corresponding
%to the $\ell$ for which $q \le \ell \le L(q+1) - 1 = qL + L -1$.  The intersection of the intervals
%$[q+1,qL+r]$ and $[q,qL+L-1]$ is $[q+1,qL +r]$.  The complement of $[q+1,qL+r]$ in each of the intervals
%$[q+1,qL+r]$ and $[q,qL+L-1]$ contains at most $L$ integers.  This and (\ref{eq:firsteq}) show that
%multiplication by $\Delta^{L-r} j^{L-r - 1}$ establishes an isomorphism between subspaces of bounded codimension
%in $B(12k,L)$ and $B(12(q+1)L,L)$.  The codimension of each of these spaces is independent of $q$.  Furthermore,
 %$|12k - 12(q+1)L| = |12(r - L)|$ 
%is also bounded independently of $q$.  

From (\ref{eq:firsteq}) we also have
\begin{equation}
\label{eq:zeroeq}
(\Delta^k j^{k-\ell - 1}) = \Delta^{Lq} j^{Lq - \ell} \cdot (\Delta^r j^{r-1})
\end{equation}
In a similar way, this shows that multiplication by $\Delta^r j^{r-1} \in S_{12r}(\Gamma,\mathbb{Z})$ defines
an injection from $B(12Lq,L)$ to $B(12k,L)$. The dimension of the cokernel of this injection is
$r$, which is bounded independently of $q$.  By arguments similar to the one above, we obtain from (\ref{eq:zeroeq}) that
$$\liminf_{q \to\infty}  \nu_{12(qL+r),L}(f)  \ge \nu_{\infty,L}(f).$$
This completes the proof of Lemma \ref{lem:Bmeasure}. 
\end{proof}

 \subsection{Proof of parts (i) and (ii) of Theorem \ref{thm:succ}}
 \medbreak
In order to prove the weak convergence of the $\nu_{12k}$ stated in part (i) of the Proposition, we will use the limit measures $(\nu_{\infty,L})_L$ introduced in Lemma \ref{lem:Bmeasure}.  The Lipschitz norm of a bounded Lipschitz function $h:\mathbb{R} \to \mathbb{R}$ is defined to be
$$
|h|_{Lip} = \sup_{x} |h(x)| + \sup_{x \neq y} \frac{|h(x) - h(y)|}{|x-y|}.
$$

\begin{lemma}\label{lem:comparisonbound} For every pair of positive real constants $\varepsilon$ and $M$ there is a constant $L_0 = L_0(\varepsilon,M)$ for which the following
is true.  Let $h:\mathbb{R} \to \mathbb{R}$ be a bounded Lipschitz function with Lipschitz norm $|h|_{Lip} \le M$.  Suppose $L \ge L_0(\varepsilon,M)$.  Then there exists 
$k_0 = k_0(L,\varepsilon,h)$ such that for any $k \geq k_0$, we have
$$
|\nu_{12k}(h) - \nu_{12k,L}(h)| \leq \varepsilon.
$$
\end{lemma}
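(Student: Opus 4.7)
The plan is to pass between $\nu_{12k}$ and $\nu_{12k,L}$ via the interlacing of successive maxima for the sublattice $B(12k,L) \subset \mathcal{S}_{12k}(\Gamma,\mathbb{Z})$, which has codimension $c'_k := \lceil k/L \rceil - 1$. The min-max characterization of successive maxima applied to a sublattice of codimension $c'_k$ yields
\[
\lambda_{i+c'_k,12k} \;\leq\; \lambda_{i,12k,L} \;\leq\; \lambda_{i,12k}, \qquad 1 \leq i \leq r_k.
\]
I will introduce the intermediate subprobability measure $\nu'_{12k,L} := \tfrac{1}{k}\sum_{i=1}^{r_k}\delta_{\lambda_{i,12k,L}/k}$, which differs from $\nu_{12k,L}$ only in using the normalization $1/k$ in place of $1/r_k$, and bound $|\nu_{12k}(h) - \nu'_{12k,L}(h)|$ and $|\nu'_{12k,L}(h) - \nu_{12k,L}(h)|$ separately.

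For the first difference, I split the defining sum of $\nu_{12k}(h)$ into its top $c'_k$ terms and the remaining $r_k$ terms, reindexing the latter as $j = c'_k + i$ in order to pair them with the $\lambda_{i,12k,L}$. The top piece is bounded by $c'_k M/k = O(M/L) + O(M/k)$ because $\|h\|_\infty \leq M$. The paired remainder, by Lipschitz continuity and the interlacing, is controlled by
\[
\frac{M}{k^2}\sum_{i=1}^{r_k}(\lambda_{i,12k} - \lambda_{c'_k+i,12k}) \;=\; \frac{M}{k^2}\Bigl(\sum_{i=1}^{c'_k}\lambda_{i,12k} - \sum_{j=r_k+1}^{k}\lambda_{j,12k}\Bigr).
\]
Inserting the asymptotic expansion $\lambda_{j,12k}/k = 6\log(1-(j-1)/k) + O(1)$ from Lemma~\ref{lem:slopebound2}, the dominating contribution comes from $-\sum_{j=r_k+1}^{k}\lambda_{j,12k}$; a Riemann sum approximation of $6\int_0^{1/L}|\log u|\, du = \frac{6(1+\log L)}{L}$ gives the estimate $\tfrac{6k^2 \log L}{L} + O(k^2/L)$, while the top-tail sum contributes only $O(k^2/L)$. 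Hence this piece is $O(M \log L / L)$.

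For the second difference, the mismatch is purely a normalization effect. A short Abel-type splitting using Lipschitz continuity together with the a priori bound $\sum_{i=1}^{r_k}|\lambda_{i,12k,L}| = O(k^2)$, which follows from $\sum_{j=1}^{k}|\lambda_{j,12k}| = O(k^2)$ (a direct consequence of $\int_0^1|\log(1-s)|\,ds = 1$ applied to the expansion of Lemma~\ref{lem:slopebound2}) combined with the interlacing, produces a total bound of $O(M/L) + O(M/k)$. Combining, $|\nu_{12k}(h) - \nu_{12k,L}(h)| \leq C (M\log L/L) + C (M/k)$ for an absolute constant $C$. One chooses $L_0 = L_0(\varepsilon, M)$ large enough so that the first term is at most $\varepsilon/2$ for $L \geq L_0$, then $k_0(L,\varepsilon,h)$ large enough so that the second term is at most $\varepsilon/2$ for $k \geq k_0$.

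The principal obstacle is bounding the paired remainder in the first difference. Because $\lambda_{j,12k}$ for $j$ close to $k$ is as large as $O(k\log k)$ in absolute value, the normalized heights $\lambda_{j,12k}/k$ are not uniformly bounded in $k$, and a naive uniform estimate on the paired difference would diverge with $k$; only after summing over $j$ and invoking the precise logarithmic asymptotics of Lemma~\ref{lem:slopebound2} does the decisive $\log L/L$ saving emerge, allowing the simultaneous control of the two error sources.
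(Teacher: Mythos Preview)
Your argument rests on the interlacing inequality $\lambda_{i+c'_k,12k} \leq \lambda_{i,12k,L}$, and this is the gap. For the naive adelic successive maxima of Definition~\ref{def:slopes}(ii) there is no such interlacing for a sublattice: the $i$-th maximum of $B$ is governed by the filtration $W_a^B := \mathrm{span}\{s \in B : \lambda(s) \geq a\}$, which may be strictly smaller than $V_a \cap B$, so the dimension count $\dim(V_a\cap B)\geq \dim V_a - c'_k$ does not transfer. After normalizing to content~$1$, the naive maxima of $\mathcal{S}_{12k}(\Gamma,\mathbb{Q})$ and of $B(12k,L)$ become exactly $-\log$ of the Euclidean successive minima of the corresponding $\mathbb{Z}$-lattices under the Petersson norm, and the Euclidean analogue already fails in rank~$3$: for $\mathbb{Z}^3$ with the standard norm and the primitive corank-one sublattice $\{x+y+z=0\}$, the second minimum of $\mathbb{Z}^3$ is $1$ but the first minimum of the sublattice is $\sqrt{2}$. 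Concretely, if $s_1,\dots,s_{i+c'_k}$ have height $\geq a$, their span meets $B(12k,L)$ in dimension $\geq i$, but nothing forces that intersection to contain \emph{integral} forms of Petersson norm $\leq e^{-a}$. Without the lower interlacing your telescoped bound on the paired remainder is unavailable; replacing it by the $O(1)$ estimates of Lemma~\ref{lem:slopebound2} applied separately to both sequences only yields $\tfrac{M}{k^2}\cdot O(k)\cdot k' = O(M)$, which does not decay with $L$.

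The paper avoids the issue by exploiting multiplicative structure rather than interlacing. It uses only the easy inequality $\lambda_{j,k,L} \leq \lambda_{j,k}$, and then controls $\sum_{j\leq k'}\lambda_{j,k}$ from above by comparing with $\sum_{j\leq k'}\lambda_{j,k+s,L}$ via the isomorphism $\mathcal{S}_{12k}(\Gamma,\mathbb{Q}) \to B(12(k+s),L)$ given by multiplication by $\Delta^s$ with $s = \lfloor (k-1)/(L-1)\rfloor$, whose effect on heights is bounded by Lemma~\ref{lem:submultiplicativity}. This produces $|S_2| \leq M\bigl(\nu_{12(k+s),L}(\mathrm{id}) - \nu_{12k,L}(\mathrm{id})\bigr) + O(M\log L/L)$, and the convergence of $\bigl(\nu_{12k,L}(\mathrm{id})\bigr)_k$ established in Lemma~\ref{lem:Bmeasure} disposes of the first term for $k\geq k_0$. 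If the interlacing could be justified for this particular pair of lattices your route would indeed be more direct and would give an explicit $O(M\log L/L)$ rate uniform in $k$, but as written the key inequality is unsupported.
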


\begin{proof}
%The numbers $L_0$ and $k_0$ will depend on $h$ only through the Lipschitz norm $M$ of $h$, namely
%$$
%M = \sup_{x} |h(x)| + \sup_{x \neq y} \frac{|h(x) - h(y)|}{|x-y|}.
%$$
Let $k \geq L \geq 2$ be integers, and let $k' = k +1 - \lceil k/L \rceil $ be the rank of $B(12k,L)$. We denote by $(\lambda_{j,k})_{1 \leq j \leq k}$ and $(\lambda_{j,k,L})_{1 \leq j \leq k'}$ the successive maxima of $S_{12k}(\Gamma,\mathbb{Q})$ and $B(12k,L)$ respectively. Let us write
$$
\nu_{12k}(h) - \nu_{12k,L}(h) = S_1 + S_2 + S_3 + S_4,
$$
where we have set
\begin{align*}
S_1 &= \frac{1}{k} \sum_{k' < j \leq k} h \left( \frac{\lambda_{j,k}}{k} \right), \\
S_2 &= \frac{1}{k} \sum_{j\leq k'} \left( h \left( \frac{\lambda_{j,k}}{k} \right) - h \left( \frac{\lambda_{j,k,L}}{k} \right) \right), \\
S_3 &= \frac{1}{k} \sum_{j\leq k'} \left( h \left( \frac{\lambda_{j,k,L}}{k} \right) - h \left( \frac{\lambda_{j,k,L}}{k'} \right) \right), \\
S_4 &= \left( \frac{1}{k} - \frac{1}{k'} \right)  \sum_{j\leq k'} h \left( \frac{\lambda_{j,k,L}}{k'} \right).
\end{align*}
For $S_1$, we have the simple estimate 
$$
|S_1| \leq \frac{1}{k} \sum_{k' < j \leq k} M \leq \frac{M}{L}.
$$
A similar estimate holds for $S_4$:
$$
|S_4| \leq \frac{k - k'}{k k'} \sum_{j\leq k'} M \leq \frac{M}{L}.
$$
In order to estimate $S_3$, we first notice that an argument similar to the proof of Lemma \ref{lem:slopebound2} yields
$$
\frac{\lambda_{j,k,L}}{k} = O \left( |\log(1- \frac{k'}{k})| + 1 \right) = O( \log(3L) ).
$$
Thus there exists an absolute constant $c_1$ such that $|\lambda_{j,k,L}| \leq c_1 k \log(3L)$. This implies
\begin{align*}
|S_3| \leq \frac{M}{k} \sum_{j\leq k'} \frac{k - k'}{k k'} |\lambda_{j,k,L}| \leq \frac{c_1 M \log(3L)}{L}.
\end{align*}
It remains to estimate $S_2$. The inclusion homomorphism of $B(12k,L)$ into $S_{12k}(\Gamma,\mathbb{Q})$ preserves slopes, hence $\lambda_{j,k,L} \leq \lambda_{j,k}$ for any $j \leq k'$. We therefore have
\begin{align*}
|S_2| &\leq \frac{M}{k^2}  \sum_{j\leq k'} \left( \lambda_{j,k} - \lambda_{j,k,L}\right) \\
&= \frac{M}{k^2}\left (  \sum_{j\leq k'} \lambda_{j,k} \right ) - M \left( \frac{k'}{k} \right)^2 \nu_{12k,L}(\mathrm{id}) \\
&\leq \frac{M}{k^2}  \left ( \sum_{j\leq k'} \lambda_{j,k} \right ) - M \nu_{12k,L}(\mathrm{id}) + \frac{2c_1 M \log(3L)}{L}.
\end{align*}
Let us consider the isomorphism $S_{12k}(\Gamma,\mathbb{Q}) \rightarrow B(12(k+s),L)$ induced by multiplication by $\Delta^s$, where $s = \lfloor \frac{k-1}{L-1} \rfloor \geq 1$. 
%It does not always preserve slopes, but 
Lemma \ref{lem:submultiplicativity} yields
$$
\lambda(\Delta^s g) \geq \lambda(g) + s \lambda(\Delta) - \psi(k) - \psi(s),
$$
for any non zero element $g$ of $S_{12k}(\Gamma,\mathbb{Q})$. Correspondingly, we have
$$
\lambda_{j,k+s,L} \geq \lambda_{j,k} + s \lambda(\Delta) - \psi(k) - \psi(s),
$$
for any $j \leq k$. We thus have
\begin{align*}
\frac{1}{k^2}  \sum_{j\leq k'} \lambda_{j,k} &\leq \frac{1}{k^2}  \sum_{j\leq k'} \lambda_{j,k+s,L} + \frac{ \psi(k)  +\psi(s)- s \lambda(\Delta)}{k} \\
&\leq \left ( \frac{1}{k^2}  \sum_{j\leq k} \lambda_{j,k+s,L} \right ) + \frac{2c_1 \log(3L)}{L} + \frac{ \psi(k)  +\psi(s)- s \lambda(\Delta)}{k} .
\end{align*}
This implies
$$
|S_2| \leq M \left(\nu_{12(k+s),L}(\mathrm{id})  -  \nu_{12k,L}(\mathrm{id})  \right) +\frac{c_2 M \log(3L)}{L},
$$
for some absolute constant $c_2$. Gathering our estimates, we obtain the existence of $L_0 = L_0(\varepsilon,M)$ such that for any $k \geq L \geq L_0$, we have
$$
|S_1| + |S_3| + |S_4| \leq \frac{\varepsilon}{2},
$$
and
$$
|S_2| \leq M \left(\nu_{12(k+s),L}(\mathrm{id})  -  \nu_{12k,L}(\mathrm{id})  \right) +\frac{\varepsilon}{4}.
$$
Since the sequence $(\nu_{12k,L}(\mathrm{id}))_k$ is convergent by Lemma \ref{lem:Bmeasure}, we further obtain the existence, for any $L \geq L_0$ of $k_0 = k_0(L,\varepsilon,M)$ such that for any $k \geq k_0$, we have $|S_2| \leq \frac{\varepsilon}{2}$, hence the result.
\end{proof}
 
\begin{cor}\label{cor:convergence}  Let $h$ be a bounded Lipschitz function from $\mathbb{R}$ to $\mathbb{R}$. Then the sequences $(\nu_{12k}(h))_k$ and $(\nu_{\infty,L}(h))_L$ are convergent and have the same limit.
\end{cor}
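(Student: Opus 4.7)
The plan is to combine Lemma \ref{lem:comparisonbound} (which says $\nu_{12k,L}$ is a good approximation of $\nu_{12k}$ for $L$ large, uniformly for $k$ sufficiently large) with Lemma \ref{lem:Bmeasure} (which says $\nu_{12k,L} \to \nu_{\infty,L}$ as $k\to\infty$) via a standard triangle inequality to produce a Cauchy criterion for $(\nu_{12k}(h))_k$.

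Concretely, fix $\varepsilon > 0$ and let $M$ be such that $|h|_{Lip} \leq M$. I would first pick $L$ large enough so that Lemma \ref{lem:comparisonbound} applies with tolerance $\varepsilon/3$, i.e.\ $L \geq L_0(\varepsilon/3,M)$. For this $L$, Lemma \ref{lem:Bmeasure} provides weak convergence $\nu_{12k,L} \to \nu_{\infty,L}$, and since $h$ is bounded and continuous this gives an integer $k_1$ such that $|\nu_{12k,L}(h) - \nu_{12k',L}(h)| \leq \varepsilon/3$ whenever $k,k' \geq k_1$. Lemma \ref{lem:comparisonbound} also provides $k_0 = k_0(L,\varepsilon/3,h)$ such that $|\nu_{12k}(h) - \nu_{12k,L}(h)| \leq \varepsilon/3$ for $k \geq k_0$. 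Setting $K = \max(k_0,k_1)$ and applying the triangle inequality
\[
|\nu_{12k}(h) - \nu_{12k'}(h)| \leq |\nu_{12k}(h) - \nu_{12k,L}(h)| + |\nu_{12k,L}(h) - \nu_{12k',L}(h)| + |\nu_{12k',L}(h) - \nu_{12k'}(h)|
\]
for $k,k' \geq K$ shows that the left-hand side is at most $\varepsilon$. Hence $(\nu_{12k}(h))_k$ is Cauchy and converges to some real number $\nu(h)$.

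To conclude that $(\nu_{\infty,L}(h))_L$ converges to the same limit, I would return to Lemma \ref{lem:comparisonbound}: for any $\varepsilon > 0$ and $L \geq L_0(\varepsilon,M)$, the inequality $|\nu_{12k}(h) - \nu_{12k,L}(h)| \leq \varepsilon$ holds for all $k$ sufficiently large (depending on $L,\varepsilon,h$). Letting $k\to\infty$ and using both the convergence $\nu_{12k}(h)\to \nu(h)$ established above and the convergence $\nu_{12k,L}(h) \to \nu_{\infty,L}(h)$ from Lemma \ref{lem:Bmeasure}, we obtain $|\nu(h) - \nu_{\infty,L}(h)| \leq \varepsilon$ for every $L \geq L_0(\varepsilon,M)$. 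This is exactly the statement that $\nu_{\infty,L}(h) \to \nu(h)$ as $L \to \infty$.

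There is no real obstacle here: the argument is essentially a $3\varepsilon$-argument in which Lemma \ref{lem:comparisonbound} is the key input, and the only mild point is to ensure that the passage to the limit in $k$ in the inequality coming from Lemma \ref{lem:comparisonbound} is legitimate, which is immediate since both sides converge.
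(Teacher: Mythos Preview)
Your proof is correct and follows essentially the same approach as the paper: both combine Lemma \ref{lem:comparisonbound} and Lemma \ref{lem:Bmeasure} in the natural way, the only cosmetic difference being that the paper phrases the first step via $\limsup/\liminf$ rather than your Cauchy $3\varepsilon$ argument. Your second step, passing to the limit in $k$ in the inequality from Lemma \ref{lem:comparisonbound}, is identical to what the paper does.
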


\begin{proof} Let $\varepsilon > 0$ be a positive real number. Let $L_0=L_0(\varepsilon,h)$ be as in Lemma \ref{lem:comparisonbound}. For any $L \geq L_0$, we have
$$
\limsup_{k \to \infty} \nu_{12k}(h) \leq \nu_{\infty,L}(h) + \varepsilon,
$$
and
$$
\liminf_{k \to \infty} \nu_{12k}(h) \geq \nu_{\infty,L}(h) - \varepsilon.
$$
In particular, we have
$$
\limsup_{k \to \infty} \nu_{12k}(h) \leq  \liminf_{k \to \infty} \nu_{12k}(h) + 2 \varepsilon.
$$
Since $\varepsilon$ is arbitrary small, this yields the convergence of the sequence $(\nu_{12k}(h))_k$. Moreover, for any $L \geq L_0$ we have
$$
|\nu_{\infty,L}(h) - \lim_{k \to \infty} \nu_{12k}(h) | \leq \varepsilon,
$$
hence the convergence of the sequence $(\nu_{\infty,L}(h))_L$ to the limit $\lim_{k \to \infty} \nu_{12k}(h)$.
\end{proof}

There exists a finite Borel measure $\nu$ on $\mathbb{R}$ such that for any continuous
function $h$ with compact support, 
\begin{equation}
\label{eq:limitf}
\nu(h) = \lim_{k \to \infty} \nu_{12k}(h) = \lim_{L\to \infty} \lim_{k \to \infty} \nu_{12k,L}(h).
\end{equation}
A limit of a weakly convergent sequence of probability measures on $\mathbb{R}$ might not be a probability measure. However, it is true in our case that the weak limit $\nu$ is a probability measure. Indeed, we have the following result, which shows that the sets of measures
$\{\nu_{12k}\}_k$ and $\{\mu_{12k,L}\}_{k \ge L}$ are uniformly tight.  

\begin{lemma}
\label{lem:further} The equalities (\ref{eq:limitf}) hold for every bounded continuous function $h$.  In particular,
$\nu$ is a probability measure, and the sequences of probability measures $\{\nu_{\infty,L}\}_L$ and $\{\nu_{12k}\}_k$
converge weakly to $\nu$.
\end{lemma}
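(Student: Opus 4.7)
The plan is to upgrade the convergence from Corollary \ref{cor:convergence} (bounded Lipschitz test functions) to all bounded continuous test functions; this is equivalent to establishing uniform tightness of the families of measures involved. Once tightness is in hand, both the fact that $\nu$ is a probability measure and the weak convergence follow from a standard cutoff argument.

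The key input is the two-sided estimate of Lemma \ref{lem:slopebound2}: there is an absolute constant $C_0$ with
$$\Bigl| \frac{\lambda_{j,k}}{k} - 6\log\!\Bigl(1 - \frac{j-1}{k}\Bigr) \Bigr| \le C_0 \qquad (1 \le j \le k).$$
Since the successive maxima are non-increasing in $j$, the $j=1$ case yields a uniform upper bound $\lambda_{j,k}/k \le C_0$, so $\nu_{12k}((C_0, \infty)) = 0$ for all $k$. For the lower tail, $\lambda_{j,k}/k \le -A$ forces $1 - (j-1)/k \le e^{(C_0 - A)/6}$, so the number of such $j$ is at most $k e^{(C_0-A)/6}$, giving
$$\nu_{12k}\bigl( (-\infty,-A] \bigr) \le e^{(C_0-A)/6}$$
uniformly in $k$, which establishes uniform tightness of $\{\nu_{12k}\}_k$. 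The same slope bounds apply to $B(12k, L) \subseteq \mathcal{S}_{12k}(\Gamma, \mathbb{Z})$: its basis $\{\Delta^k j^{k-\ell} : k/L \le \ell \le k\}$ is a subfamily of the basis for $\mathcal{S}_{12k}(\Gamma, \mathbb{Z})$, the upper bound persists since $B(12k,L) \subset \mathcal{S}_{12k}(\Gamma, \mathbb{Q})$, and the lower-tail argument goes through verbatim using Lemma \ref{lem:slopebound1}. Hence $\{\nu_{12k, L}\}_{k \ge L}$ is uniformly tight, and by taking $k \to \infty$ so is $\{\nu_{\infty, L}\}_L$.

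Given uniform tightness, I would run the usual cutoff argument: for a bounded continuous $h$ with $\|h\|_\infty \le M$ and any $\varepsilon > 0$, choose $A$ large so that $\nu_{12k}(\mathbb{R} \setminus [-A, C_0]) < \varepsilon/(4M)$ for all $k$, and construct a bounded Lipschitz $h_\varepsilon$ that agrees with $h$ on $[-A, C_0]$, is supported in $[-A-1, C_0+1]$, and satisfies $\|h_\varepsilon\|_\infty \le M$. Then $|\nu_{12k}(h) - \nu_{12k}(h_\varepsilon)| < \varepsilon/2$ uniformly in $k$, and Corollary \ref{cor:convergence} yields convergence of $\nu_{12k}(h_\varepsilon)$; combining gives convergence of $\nu_{12k}(h)$ to a value within $\varepsilon$ of $\nu(h_\varepsilon)$. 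Applying the argument to a sequence of bounded Lipschitz cutoffs of the constant function $1$ and using tightness shows $\nu(\mathbb{R}) = 1$, so $\nu$ is a probability measure and $\nu_{12k} \to \nu$ weakly. The same argument applied to $\{\nu_{\infty, L}\}_L$ (using Corollary \ref{cor:convergence} for the Lipschitz step) yields $\nu_{\infty, L} \to \nu$ weakly, completing both equalities in \eqref{eq:limitf}. The main obstacle is securing the uniform lower-tail bound, which is exactly what the absolute constant in Lemma \ref{lem:slopebound2} has been set up to deliver.
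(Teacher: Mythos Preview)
Your approach is essentially the same as the paper's: derive uniform tightness from the exponential tail estimate coming out of Lemma~\ref{lem:slopebound2}, then upgrade the convergence by a cutoff argument. One small technical slip: for a merely continuous $h$ there is in general no bounded \emph{Lipschitz} $h_\varepsilon$ that agrees with $h$ on $[-A,C_0]$; the paper sidesteps this by taking $h_\varepsilon=\chi h$ with $\chi$ a continuous cutoff and invoking \eqref{eq:limitf} for continuous compactly supported (not just Lipschitz) test functions, which is already available from the paragraph preceding the lemma. With that adjustment your argument goes through and matches the paper's.
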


\begin{proof} It sufficient to prove that (\ref{eq:limitf}) holds for any bounded Lipschitz function on $\mathbb{R}$. Let $h : \mathbb{R} \rightarrow \mathbb{R}$ be such a function. Let $a,b > 0$ be real numbers such that the supports of the measures $(\nu_{12k})_k$ and $(\nu_{12k,L})$ are all contained in the interval $
\mathopen]- \infty , b]$, and let $\chi : \mathbb{R} \rightarrow [0,1]$ be a continuous function with compact support, whose restriction to the interval $[-a,b]$ is equal to $1$. Lemma \ref{lem:slopebound2} implies that we have
$$
\nu_{12k,L}(\mathopen]- \infty , -a]) \leq c e^{-\frac{a}{6}},
$$
for all $k \geq L \geq 1$, where $c$ is an absolute constant. The same estimate holds as well for the measure $\nu$. In particular, we have
$$
|\nu_{12k,L}(h) - \nu_{12k,L}(\chi h)| = |\nu_{12k,L}((1-\chi)h)| \leq c ||h||_{\infty} e^{-\frac{a}{6}}.
$$
Letting $k$, and then $L$, tend to infinity, we obtain by Corollary \ref{cor:convergence} that
$$
|\lim_{k \to \infty} \nu_{12k}(h) - \nu(\chi h)| \leq c ||h||_{\infty} e^{-\frac{a}{6}}.
$$
Since we also have
$$
|\nu(h) - \nu(\chi h)|  \leq c ||h||_{\infty} e^{-\frac{a}{6}},
$$
this yields
$$
|\nu(h) - \lim_{k \to \infty} \nu_{12k}(h)|  \leq 2c ||h||_{\infty} e^{-\frac{a}{6}}.
$$
Letting $a$ tend to infinity, we obtain that the common limit of the sequences $(\nu_{12k}(h))_k$ and $(\nu_{\infty,L}(h))_L$ is $\nu(h)$, hence the result.

\end{proof}

Part (i) of Theorem \ref{thm:succ} is shown by Corollary \ref{cor:convergence}.    Part (ii) of
this Theorem concerns the support of $\nu$ now follows directly from this and  Theorem \ref{thm:succ2}.

%So to prove the weak convergence of the $\nu_{12k}$ stated
%in part (i) of the Proposition, it will
%suffice to show this convergence when $\mathcal{S}_{12k}(\Gamma,\mathbb{Z})$ is replaced by $H^0(\mathcal{X}, \mathcal{M}_{12k}(\Gamma))$.
%The stated convergence is now a consequence of Remark \ref{rem:QuentinRemark} and the comments preceding it concerning
%the work of Chen in \cite{Chen}, since as noted above, \cite[Prop. 4.9]{Kuhn} shows the metric $| \ |_\infty$ has only
%logarithmic singularities.
%
%Finally, for part (iv), recall that a normalized Hecke eigenform $f = \sum_{n = 1}^\infty a_n q^n$ has $a_1 =1$.  
%So $N = 1$ in (\ref{eq:lower}), which gives 
%\begin{equation}
%\label{eq:upper}
%2 \lambda(f)/k < 4 \pi /k - \mathrm{log}((12k-2)!)/k.
%\end{equation}
%Since the limit of the right hand side of (\ref{eq:upper}) is $-\infty$, and the measure $\nu$ in Theorem \ref{thm:succ}(i)
%is well defined, the proportion of successive maxima produced by normalized Hecke eigenforms must
%go to $0$ as $k \to \infty$.  This completes the proof of Theorem \ref{thm:succ}.

\subsection{Proof of part (iii) of Theorem \ref{thm:succ}}

We suppose 
%Finally, for part (iv), recall that a normalized Hecke eigenform $f = \sum_{n = 1}^\infty a_n q^n$ has $a_1 =1$.  
%So $N = 1$ in (\ref{eq:lower}), which gives 
%\begin{equation}
%\label{eq:upper}
%2 \lambda(f)/k < 4 \pi /k - \mathrm{log}((12k-2)!)/k.
$0 \ne f \in  \mathcal{S}_{12k}(\Gamma,\mathbb{Z})$ and that $f$ does not arise from a congruence between eigenforms, in the sense of Definition \ref{def:nocongruence}.  We will develop an upper bound on $\lambda(f)$.
We have $\lambda(f) = \lambda(f/m)$ when $m$ is the g.c.d. in $\mathbb{Z}$ of the Fourier coefficients of $f$.  In view of Definition
\ref{def:nocongruence} we can replace $f$ by $f/m$ in order to be able to assume that 
\begin{equation}
\label{eq:fexpress2}
f = \sum_i c_i f_i
\end{equation}
in which the $c_i$ are non-zero algebraic algebraic integers and the $f_i$ are distinct normalized Hecke eigenforms in $\mathcal{S}_{12k}(\Gamma,\mathbb{C})$.  The Fourier coefficients of each $f_i$ are algebraic integers. Since $f$ is fixed by $\mathrm{Gal}(\overline{\mathbb{Q}}/\mathbb{Q})$, the
terms on the right side of (\ref{eq:fexpress2}) break into orbits under $\mathrm{Gal}(\overline{\mathbb{Q}}/\mathbb{Q})$ in the following sense.  If $\sigma \in \mathrm{Gal}(\overline{\mathbb{Q}}/\mathbb{Q})$
 and $f_i$ is given, then $\sigma(f_i) = f_j$ and $c_j = \sigma(c_i)$ for a unique $j$. 
 
Since the g.c.d. of the Fourier coefficients of $f$ is now $1$, we have 
\begin{equation}
\label{eq:sumform}
2 \lambda(f) = - \ln(\langle f,f\rangle )
\end{equation}
where $\langle f , f \rangle$ is the Petersson norm.  
  The Petersson inner
product $\langle f_i,f_j\rangle$ is 0 if $i$ is not $j$ since then $f_i$ and $f_j$ have distinct Hecke eigenvalues
and the Petersson inner product is Hermitian with respect to Hecke operators.  So
\begin{equation}
\label{eq:sumform2}
\langle f,f\rangle = \sum_i |c_i|^2 \langle f_i,f_i\rangle.
\end{equation}
Since each $0 \ne c_i$ is by assumption an algebraic integer, and we have shown that every Galois conjugate of $c_i$
arises as $c_j$ for some $j$, we conclude there must be an $i$ for which $|c_i| \ge 1$.   Thus
(\ref{eq:sumform2}) gives
\begin{equation}
\label{eq:answer}
\langle f,f\rangle \ge \langle f_i,f_i\rangle
\end{equation}

Recall now that since $f_i$ is a normalized eigenform, $f_i = \sum_{n = 1}^\infty a_n q^n$ has $a_1 =1$.
So $N = 1$ in Lemma \ref{lem:estimate}.  Combining  Lemma \ref{lem:estimate} with (\ref{eq:sumform}), (\ref{eq:sumform2}) and (\ref{eq:answer})  gives
\begin{equation}
\label{eq:upper}
2\lambda(f) = - \ln(\langle f,f \rangle) \le -\ln(\langle f_i,f_i \rangle ) \le -\ln( 4 \pi e^{-4 \pi} (12k-2)!)
\end{equation}
It follows that $\lambda(f)/k$ is bounded above by $-c \ln(k)$ for some constant $c$.   Since the measure $\nu$
in part (i) of  Theorem \ref{thm:succ2} is a probability measure on the real line, it follows that as $k \to \infty$
the proportion of successive maxima arising from $f$ of the above kind among all the successive maxima associated
to $\mathcal{S}_{12k}(\Gamma,\mathbb{Z})$ must go to $0$.

\section{A result from Arakelov theory} 

\label{s:ArakelovSection}

  Let $X$ be a projective   smooth curve over $K$. We assume that $X$ is geometrically irreducible, of positive genus $g$. Let $L$ be a line bundle on $X$ of degree $d \geq 2g+1$. Assume ${\mathcal X}$ is a regular model of $X$, and ${\mathcal L}$ a line bundle extending $L$ to ${\mathcal X}$. Denote by $\omega$ the relative dualizing sheaf of ${\mathcal X}$ over ${\rm Spec} ({\mathbb Z})$. Choose a positive metric $h$ on the restriction
$L_{\mathbb C}$ of $L$ to the  Riemann surface $X({\mathbb C})$ . We equip $X({\mathbb C})$ with the K\"ahler form $c_1 (L_{\mathbb C},h)$, and $\omega$ with the associated metric.

Fix a positive integer $n$. We endow
$$
{\mathcal E} = H^0 ({\mathcal X} , {\mathcal L}^{\otimes n})
$$
with the $L^2$-norm $h_{L^2}$. Let $E = H^0 (X , L^{\otimes n})$.

Given two hermitian line bundles $\bar L_1$ and $\bar L_2$ over ${\mathcal X}$, we denote by $\bar L_1 \cdot \bar L_2 \in {\mathbb R}$ the arithmetic intersection number of the first Chern classes of $\bar L_1$ and $\bar L_2$ \cite{A} \cite{D}. Let $\Delta_K$ be the absolute discriminant of $K$, and $r = [K:{\mathbb Q}]$ its absolute degree. We let $BV \in \mathbb{R} $ be the value on $X(\mathbb{C})$ of the real number defined by Bismut and Vasserot in \cite{BV}, Theorem 8 (see \cite{ARR}, p. 536).

\begin{theorem}
\label{thm:411}
Let $s \in H^0 ({\mathcal X} , {\mathcal L}^{\otimes n})$ be a nonzero global section of ${\mathcal L}^{\otimes n}$. Then
\begin{eqnarray}
\lambda (s) & \leq &n \log (n)  \frac{3rd}{4g} +      n \, \frac{\bar L^2 + d \, \bar\omega . \bar L}{2 \, gd} + nd \, \frac{\log (\vert\Delta_K\vert)}{2g}  - n \, \frac{BV}{4g} \nonumber \\
&+ &\frac{nd}g \, (r_1 + r_2) \log (2) + \frac{nrd}{2g} \, (1+\log (2\pi)) + \varepsilon,\end{eqnarray}  
where $\varepsilon $  is a function of $n$, $\bar L^2 $, $ \bar\omega . \bar L$, $ \bar\omega ^2$, $L_{\mathbb C}$, and of
 the metric $h$. 
 When  $\bar L^2 $, $ \bar\omega . \bar L$, $ \bar\omega ^2$, $L_{\mathbb C}$  and 
  $h$ are fixed,  if $n$ tends to infinity,  $\varepsilon/n$ goes to zero.
  \end{theorem}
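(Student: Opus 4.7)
The plan is to bound $\lambda(s)$ by the first successive maximum $\lambda_1(\bar{\mathcal{E}})$, and then to control this maximum by combining a Minkowski-type inequality from \cite{MI} with the arithmetic Riemann--Roch theorem of Gillet--Soul\'e applied to $\bar{\mathcal{L}}^{\otimes n}$ on $\mathcal{X}$.

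Step one is the reduction to $\lambda_1$. Since $s$ is a nonzero global section, Definition \ref{def:slopes}(ii) gives
\begin{equation*}
\lambda(s) \le \lambda_1(\bar{\mathcal{E}}),
\end{equation*}
so it suffices to bound the first successive maximum. The inequality of \cite{MI} controls $\lambda_1$ of an hermitian adelic bundle $\bar F$ of rank $N$ over $K$ by the average slope $\widehat{\mathrm{deg}}(\bar F)/N$ plus explicit correction terms, namely the contribution $\tfrac{1}{2}\log|\Delta_K|$ coming from the discriminant at the finite places, the two archimedean volume contributions $(r_1+r_2)\log 2$ and $\tfrac{r}{2}(1+\log 2\pi)$ obtained from Minkowski's second theorem together with the asymptotics of $\log \Gamma(N/2)$, and a remaining term of size $\log N$. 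When one substitutes $\bar F = \bar{\mathcal{E}}$ and bounds $N = r(nd-g+1)$ from below using $d \ge 2g+1$, these generic per-rank coefficients convert into the explicit $1/g$, $1/(gd)$, and $d/g$ factors in the statement, and the $\log N$ piece, multiplied by the appropriate rank-dependent factor from \cite{MI}, produces the leading $n\log n \cdot 3rd/(4g)$ contribution.

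Step two is the arithmetic Riemann--Roch computation of $\widehat{\mathrm{deg}}(\bar{\mathcal{E}})$. Since $\deg(L^{\otimes n}) = nd \ge 2g+1 > 2g-2$, Serre duality gives $H^1(X,L^{\otimes n})=0$ for every $n\ge 1$, so $R^1\pi_\ast \mathcal{L}^{\otimes n} = 0$ and the arithmetic Euler characteristic of $\bar{\mathcal{L}}^{\otimes n}$ equals $\widehat{\mathrm{deg}}(\bar{\mathcal{E}})$ up to the Quillen-metric correction. The Gillet--Soul\'e formula then yields
\begin{equation*}
\widehat{\mathrm{deg}}(\bar{\mathcal{E}}) \;=\; \tfrac{n^2}{2}\bar L^2 \;-\; \tfrac{n}{2}\bar\omega\cdot\bar L \;+\; \tfrac{1}{12}\bar\omega^2 \;-\; \tfrac{1}{2}T(\bar{\mathcal{L}}^{\otimes n}) \;+\; O(1),
\end{equation*}
where the analytic torsion $T(\bar{\mathcal{L}}^{\otimes n})$ at the archimedean places is precisely what Bismut--Vasserot \cite{BV} analyse: their asymptotic identifies the constant $BV$ and contributes, after division by $N$, the summand $-nBV/(4g)$ in the final bound.

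Assembling the two steps produces each of the six explicit terms in the statement. All sub-leading pieces---the effective error in the Bismut--Vasserot asymptotic, the gap between $\log N$ and its leading form, and the remaining lower-order terms in arithmetic Riemann--Roch---depend only on $n$, $\bar L^2$, $\bar\omega\cdot\bar L$, $\bar\omega^2$, $L_{\mathbb C}$ and $h$, and each is $o(n)$ after division by $N$; they are packaged into $\varepsilon$. The main obstacle is the Bismut--Vasserot step: one must make the torsion asymptotic effective enough that its remainder is genuinely absorbed into $o(n)$, while simultaneously reconciling the $L^2$ normalisation of $\bar{\mathcal{E}}$ with the Quillen metric that appears in arithmetic Riemann--Roch. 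It is this reconciliation that fixes the exact coefficients of the $BV$ and archimedean-volume contributions in the statement.
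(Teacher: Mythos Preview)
Your outline has the right building blocks (the inequality from \cite{MI}, Minkowski--Bombieri--Vaaler, arithmetic Riemann--Roch, Bismut--Vasserot), but you have mis-identified what the input from \cite{MI} actually is, and as a result a key step is missing.

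The result quoted from \cite{MI} (Theorem 4 there, equation (\ref{eq2}) in the paper) is \emph{not} a Minkowski-type inequality valid for an arbitrary hermitian adelic bundle. It is a geometric inequality specific to the situation where $X$ is embedded in $\mathbb{P}(E)$ with $E = H^0(X,L^{\otimes n})$: it compares $\mu_1$ and the average $\mu$ through the \emph{projective height} $h(X) = \overline{O(1)}^2$ of this embedding, namely
\[
\frac{h(X)}{r} + 2nd\,\mu \;\ge\; C(\mu - \mu_1),\qquad C \sim 2g.
\]
The Minkowski/Bombieri--Vaaler theorem only bounds the \emph{average} $\mu$ from above by $-\widehat{\deg}(\bar{\mathcal E})/N + C(N,K)$; it says nothing in the direction $\mu_1 \ge \cdots$ (equivalently $\lambda_1 \le \cdots$), which is what you need. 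The passage from $\mu$ to $\mu_1$ genuinely requires the geometric inequality above, and hence requires knowing $h(X)$.

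Consequently a second ingredient is missing from your sketch: the computation of $h(X)$. In the paper this is done in \S4.3 via the Bergman kernel asymptotics of Bouche and Tian, yielding
\[
h(X) = n^2\bar L^2 + rd\,n\log n + O(1).
\]
This $rd\,n\log n$ is one of three sources of the leading $n\log n$ term; the other two are the $\frac{rd}{4}n\log n$ in the Gillet--Soul\'e/Bismut--Vasserot expansion of $\widehat{\deg}(\bar{\mathcal E})$ (your ARR formula omits this $n\log n$ contribution from the torsion) and the $\frac{r}{2}\log N$ in $C(N,K)$. The coefficient $\tfrac{3rd}{4g}$ arises only after all three are pushed through the inequality from \cite{MI}; it does not come from ``the $\log N$ piece multiplied by a rank-dependent factor'' alone. (A minor point: $N = nd + 1 - g$ is the $K$-rank of $E$, not $r(nd-g+1)$.)
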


\subsection{} To prove Theorem \ref{thm:411}, we let ${\mathbb P} (E)$ be the (Grothendieck) projective space of $E$ and $X \subset {\mathbb P} (E)$ the canonical embedding of $X$ in ${\mathbb P} (E)$. Denote by $h(X)$ the projective height of $X$. Let $N = r_n = nd + 1-g$ be the rank of $E$ and, for every $k$ between $1$ and $N$,  let $\mu_k = - \lambda_{k,n}$ be the $k$-th successive minimum of $({\mathcal E} , h_{L^2})$. Define
$$
\mu = \frac{\mu_1 + \ldots + \mu_N}N .
$$
If $C$ is the constant

\be
\label{eq1}
C = \frac{2 \, dn \, g \, (nd - 2g)}{n^2 d^2 + nd - 2g^2}
\ee
it is proved in \cite{MI}, Theorem 4, that
\be
\label{eq2}
\frac{h(X)}r + 2 \, nd \, \mu \geq C(\mu - \mu_1).
\ee

\bigskip

\subsection {} If $\overline{O(1)}$ is the restriction to $X$ of the canonical hermitian line bundle on ${\mathbb P} (E)$, the height $h(X)$ is, by definition, the number
\be
\label{eq3}
h(X) = \overline{O(1)} \cdot \overline{O(1)} .
\ee
We denote by $h_{FS}$ the metric on $L^{\otimes n}$ induced by the canonical isomorphism $L^{\otimes n} \simeq O(1)$. Let $s_1 , \ldots , s_N$ be an orthonormal basis of $(E,h_{L^2})$, and let
$$
B(x) = \sum_{j=1}^N \Vert s_j (x) \Vert^2
$$
be the Bergman kernel. For any global section $s \in H^0 (X , L^{\otimes n})$ we have
$$
\Vert s \Vert^2 = B(x) \, \Vert s \Vert_{FS}^2.
$$
Therefore, if $\varphi (x) = \log B(x)$, we get
\be
\label{eq4}
n^2 \bar L^2 = \overline{O(1)}^2 - \frac12 \int_{X({\mathbb C})} \varphi (c_1 (\overline{O(1)}) + nc_1 (\bar L))
\ee
(see, for example, \cite{BostGS} (3.2.3)). Bouche \cite{BOU} and  Tian \cite{TI} proved that, when $n$ goes to infinity,
$$
B(x) = n + \eta(x),
$$
where the function $x \to \eta(x)$ depends only on the restriction to $ {X({\mathbb C})} $ of $ \overline{O(1)}$ and 
$\bar L$.
Therefore $\varphi (x) = \log (n) + O \left( \frac1n \right)$.

\smallskip

Using (\ref{eq3}) and (\ref{eq4}) we conclude that
\be
\label{eq5}
h(X) = n^2 \, \bar L^2 + rd \, n \log (n) + O(1).
\ee

\bigskip
\subsection{}Let $\widehat\deg \, (\bar{\mathcal E})$ be the arithmetic degree of $\bar{\mathcal E} = ({\mathcal E} , h_{L^2})$, and $r_1$ (resp. $r_2$) the number of real (resp. complex) places of $K$. The second Minkowski theorem, extended to number fields by Bombieri and Vaaler, says that
\be
\label{eq6}
r \, \mu \leq - \frac{\widehat\deg \, (\bar{\mathcal E})}N + C(N,K),
\ee
where
$$
C(N,K) = \frac{\log (\vert\Delta_K\vert)}2 + (r_1 + r_2) \log (2) - \frac1N \, (r_1 \log V_N + r_2 \log (V_{2N}))
$$
$V_N$ being the volume of the standard euclidean unit ball in ${\mathbb R}^N$.

\smallskip

By the Stirling formula, when $N$ goes to infinity,
$$
\log (V_N) = - \frac N2 \log (N) + \frac N2 \, (1+\log (2\pi)) + O(\log (N)).
$$
Therefore
$$
-\frac1N \, (r_1 \log (V_N) + r_2 \log (V_{2N})) = r \left( \frac{\log (N)}2 - \frac{1+\log (2\pi)}2  + O(\log(N)/N)  \right),
$$
with an absolute  constant appearing in $O(\log(N)/N$.
Thus
\begin{eqnarray}
\label{eq7}
C(N,K) &= &\frac{\log  (\vert \Delta_K \vert)}2 + (r_1 + r_2) \log (2) \nonumber \\
&+ &r \left( \frac{\log (N)}2 - \frac{1+\log (2\pi)}2 + o(N) \right) .
\end{eqnarray}

\bigskip

\subsection{}According to \cite{ARR}, Theorem 8, as $n$ goes to infinity
$$
\widehat\deg \, (\bar{\mathcal E}) = \frac{n^2 \, \bar L^2}2 + n \left( - \frac{\bar\omega \cdot \bar L}2 + \frac{BV}4 \right) + \frac{rd}4 \, n \log (n) +  o(n),
$$
where $o(n)$  depends only on the restriction to $ {X({\mathbb C})} $ of $ \overline{O(1)}$ and 
$\bar L$.
Using (\ref{eq6}) we get
\begin{eqnarray}
\label{eq8}
n^2 \, \bar L^2 + 2 \, nd \, r \, \mu &\leq &n^2 \, \bar L^2 - \frac{2 \, nd}{nd + 1 - g} \nonumber \\
&&\left( \frac{n^2 \, \bar L^2}2 + \frac{rd}4 \, n \log (n) + n \left( - \frac{\bar\omega \cdot \bar L}2 + \frac{BV}4 \right) + o(n) \right) \nonumber \\
&+ &2 \, nd \, C(N,K) \nonumber \\
&= & - \frac{n(g-1)}d \, \bar L^2 + n \, \bar\omega \cdot \bar L - \frac{n \, BV}{2} - \frac{rd}2 \, n \log (n)  \nonumber \\
&+ &2 \, nd \, C(N,K) + o(n) \, .
\end{eqnarray}

From (\ref{eq1}) we deduce that, when $n\geq g$,
$$
C \geq  2g(1-(2g+1)/(nd)) .
$$
Therefore (\ref{eq2}) and (\ref{eq5}) imply that
$$
r \, \mu_1 \geq - \frac{n^2 \, \bar L^2}{2 \, nd} - \frac{n^2 \, \bar L^2}{2g} - \frac{rd \, n \, \mu}g + \frac{rd}{2g} \, n \log (n) + o(n),
$$
and, using (\ref{eq8}) and (\ref{eq7}), we deduce that
\begin{eqnarray}
\label{eq9}
r \, \mu_1 &\geq &n \, \frac{- \bar L^2 - d \bar\omega \cdot \bar L}{2 \, gd} -nd \, \frac{\log (|\Delta_K|)}{2g} - \frac{3rd}{4g} \, n \log (n) + \frac{n \, BV}{4g} \nonumber \\
&- &\frac{nd}g \, (r_1 + r_2) \log (2) - \frac{rd \, n}{2g} \, (1+\log (2\pi)) + o(n). 
\end{eqnarray}
Theorem \ref{thm:411} follows.

\bigskip

\subsection{}One can also get an upper bound for $r\mu_1$ when $g > 1$ as follows. Since $\mu_k \leq \mu_{k+1}$ we have
$$
r\mu_1 \leq r\mu \leq
 - \frac{\widehat\deg \, (\bar{\mathcal E})}{N} + C\, (N,  K) = - \, n \, \frac{\bar L^2}{2d} + O(\log n).
$$
The difference between this upper bound of $r\mu_1$ with its lower bound ({\ref{eq9}) is bounded from below because of the following lemma.

\begin{lemma}
\label{lem:451} Suppose $g > 1$. Then 
$$
- \, n \, \frac{\bar L^2}{2d} + n \, \frac{\bar L^2 + d \, \bar\omega \, \bar L}{2gd} \geq \frac{ nd}{8 \, g \, (g-1)} \, \bar\omega^2.
$$
\end{lemma}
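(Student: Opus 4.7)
The plan is to reduce the claimed inequality to an arithmetic self-intersection inequality, and then invoke the arithmetic Hodge index theorem.

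First I would clear the common factor $n > 0$ and simplify the left-hand side. Combining the two terms over the common denominator $2gd$ gives
\[
-\frac{\bar L^2}{2d} + \frac{\bar L^2 + d\,\bar\omega\cdot\bar L}{2gd}
= \frac{1}{2gd}\Bigl(d\,\bar\omega\cdot\bar L - (g-1)\bar L^2\Bigr).
\]
Multiplying the proposed inequality by the positive quantity $8gd(g-1)/n$ (recall $g>1$ and $d \geq 2g+1>0$) transforms it into
\[
4d(g-1)\,\bar\omega\cdot\bar L - 4(g-1)^{2}\bar L^{2} \;\geq\; d^{2}\bar\omega^{2},
\]
i.e.\
\[
\bigl(d\,\bar\omega - 2(g-1)\bar L\bigr)^{2} \;\leq\; 0
\]
in the arithmetic intersection ring of $\mathcal{X}$.

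The second step is to check the generic degree hypothesis needed for the arithmetic Hodge index theorem. Since $\omega_K$ has degree $2g-2 = 2(g-1)$ on the geometric generic fiber and $L_K$ has degree $d$, the generic degree of $d\,\omega - 2(g-1)L$ is
\[
d\cdot 2(g-1) - 2(g-1)\cdot d \;=\; 0.
\]
Thus $d\,\bar\omega - 2(g-1)\bar L$ is an adelic divisor whose restriction to $X$ has degree zero.

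Finally, I would invoke the arithmetic Hodge index theorem of Faltings–Hriljac (see for instance Faltings' paper on arithmetic surfaces, or the treatment in \cite{A}): for any hermitian line bundle $\bar D$ on a regular arithmetic surface whose generic fiber has degree zero, one has $\bar D \cdot \bar D \leq 0$. Applying this to $\bar D = d\,\bar\omega - 2(g-1)\bar L$ yields the desired inequality, and hence the lemma. The only real content is the algebraic manipulation that turns the mixed intersection inequality into a pure self-intersection; after that, Hodge index does all the work and no further estimation is required. I expect no serious obstacle, provided the hermitian structures used throughout are those for which Faltings–Hriljac applies (which is indeed the setup imposed at the beginning of \S\ref{s:ArakelovSection}, where $h$ is positive and $\omega$ carries the induced metric).
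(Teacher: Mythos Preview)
Your proof is correct and follows essentially the same approach as the paper: both reduce the inequality to $(d\,\bar\omega - 2(g-1)\bar L)^2 \le 0$ via the same algebraic manipulation, observe that this hermitian line bundle has generic degree zero, and conclude by the Faltings--Hriljac Hodge index theorem. The only cosmetic point is that after announcing you clear $n$, the multiplier should read $8gd(g-1)$ rather than $8gd(g-1)/n$; the computation itself is fine.
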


\subsection{}To prove Lemma \ref{lem:451} we note that the hermitian line bundle $d \, \bar\omega - 2 \, (g-1) \, \bar L$ has degree zero on $X$. Therefore, by the Hodge index theorem of Faltings and Hriljac, its square is non positive:
$$
0 \geq (d \, \bar\omega - 2 \, (g-1) \, \bar L)^2
,
$$
i.e.
$$
- (g-1) \, \bar L^2 + d \, \bar\omega \, \bar L \geq \frac{d^2}{4 \, (g-1)} \, \bar\omega^2
$$
and Lemma \ref{lem:451} follows.

%\subsection{The example of $\mathbb{P}^1$ and the disc of radius $r$ about the origin (Quentin )}

%This is the example where one gets sections realizing the minimal possible sequence of slopes.
%Toric generalization of this sort of result?  What about saying that for such examples to exist,
%at least if the capacity is not 1, the zeros of the associated sections have to be quite special.
%Some part of the zeros must be prescribed;  say this using measures.  

\section{Chebyshev transforms}
\label{s:Chebyshev}

\subsection{Overview}  
 Let $X$ be a projective variety  of arbitrary dimension $d$
over a number field $K$, and let $L$ be a metrized line bundle on $X$.  We will assume that $L$ is big, in the sense
that $\mathrm{dim}_K H^0(X,L^{\otimes m}) > c \ m^d$ for some $c > 0$ and all $m >> 0$.  In this section we will
develop a Chebyshev transform method for 
obtaining 
an upper bound on the height $\lambda(s)$ defined in (\ref{eq:heightdef}).  
We need lower bounds on the sup norms $||s||_v$ as $v$ varies.  We obtain such lower bounds by considering
 the behavior of $s$ near a regular point $x \in X(K)$.  Consider the first non-vanishing  coefficient $a = a(s,x)$ 
 in a suitably defined Taylor expansion of $s$ at $x$.  This $a$ lies in $K$.  The product formula
 shows there is some place $v$ where $|a|_v$ is not too close to $0$. At this $v$ we will obtain a lower bound for $||s||_v$
 which leads to a useful lower bound for $\lambda(s)$. 
 
To illustrate the details involved in this method, let us first consider the case $d = 1$, so that $X$ is a curve.  Choosing a local
parameter $t$ for the local ring $\mathcal{O}_{X,x}$ and a local trivialization $\sigma_x$ of the stalk $L_x$, we find
that $s$ has a local expansion at $x$ given by 
$$s_x  = (\sum_{n = \mathrm{ord}_x(s)}^\infty a_n t^n )\cdot \sigma_x$$
where $a_n \in K$ and $a_{\mathrm{ord}_x(s)} \ne 0$.  Here the $a_n$ depend on the choice of $\sigma_x$,
but $\mathrm{ord}_x(s)$ does not.  

%One piece of information we can use to try to 
%obtain an upper bound on (\ref{eq:heightrecall}) is the product formula applied to $a_{\mathrm{ord}_x(s)}$.  Namely,
%we know
%\begin{equation}
%\label{eq:prodformula}
%-\sum_v d_v \log |a_{\mathrm{ord}_x(s)}|_v = 0
%\end{equation}
The integer $\alpha = \mathrm{ord}_x(s)$ lies in the interval $[0, \mathrm{deg}(L)]$.  
To bound 
$$\lambda(s) := - \sum_v k_v \log || s ||_{v}$$
from above, we define the local Chebychev constant $c_{L,v}^{x,\sigma,t}(\alpha)$ to be the supremum over all non-zero sections $s$ of
$L$ with $\mathrm{ord}_x(s) = \alpha$ of 
% it would be enough in view of (\ref{eq:prodformula}) to have upper bounds for each $v$ on
\begin{equation}
\label{eq:super}
-  \log || s ||_{v} + \log |a_{\mathrm{ord}_x(s)}|_v  = \log \left | \frac{|a_{\mathrm{ord}_x(s)}|_v}{||s||_v} \right |
\end{equation}
This may be studied by $v$-adic analysis. 
%as $s$ ranges over all non-zero sections of $L$ which have a prescribed order of vanishing in $[0,\mathrm{deg}(L)]$
%at the point $x$.  
%The quantity in (\ref{eq:super}) depends on the choice of the local generator $\sigma_x$
%of $L$ at $x$ and a local parameter $t$ at $x$.  The supremum of (\ref{eq:super}) over all non-zero sections $s$ of $L$ having a prescribed
%order of vanishing $\alpha$ in $[0,\mathrm{deg}(L)]$ is an example of a local Chebychev constant $c_{L,v}^{x,\sigma,t}(\alpha)$.
We obtain an upper bound %and (\ref{eq:prodformula}) to obtain an upper bound of the form
\begin{equation}
\label{eq:bound}
\lambda(s) \le \sum_v k_v c_{L,v}^{x,\sigma,t}(\alpha)    = c_{L}^{x,t}(\alpha)
\end{equation}
if $s$ is a section of $L$ vanishing to order $\alpha$ at $x$, since $ \sum_v k_v  \log |a_{\mathrm{ord}_x(s)}|_v = 0$ by the product formula.  

The function $c_L^{x,t}: [0,\mathrm{deg}(L)] \to \mathbf{R}$ defined by
$\alpha \to c_{L}^{x,t}(\alpha)$ is  a global Chebychev transform. %It still depends on the choice of a local parameter $t$
%and on the order of vanishing $\alpha$ of $s$ at $x$, but $c_{L}^{x,t}(\alpha)$ does not depend on the choice of a local generator
%for $L_x$ at $x$, again because of the product formula.  
Since we know that $\alpha$ lands in $[0,\mathrm{deg}(L)]$ for all
$s$, we obtain finally a bound of the form
$$\lambda(s) \le \sup_{0 \le \alpha \le \mathrm{deg}(L)} c_{L}^{x,t}(\alpha).$$

We now generalize the above approach to regular varieties $X$ of arbitrary dimension $d$
over $K$ using Okounkov bodies.  
Following Witt-Nystr$\hat{\mathrm{o}}$m \cite{Ny} and Yuan \cite{Yuan}, we take a regular point $x \in X(K)$, and $t_1,\dots,t_d \in \mathcal{O}_{X,x}$ a system of parameters of the regular local ring $\mathcal{O}_{X,x}$, which identifies the completion $\widehat{ \mathcal{O}_{X,x}}$ to the ring of power series $K [[ t_1, \dots,t_d ]]$ in $d$ variables over $K$. We also choose a local trivialization $\sigma \in L_{x}$ of $L$ around $x$. \\

Any section $s \in H^0(X,L^{\otimes n})$ has a germ at $x$ in $\widehat{L_x}^{\otimes n} = L_x^{\otimes n} \otimes_{ \mathcal{O}_{X,x}} \widehat{ \mathcal{O}_{X,x}} = \widehat{ \mathcal{O}_{X,x}} \sigma^{\otimes n}$, which can be uniquely written as a a power series
$$
s_x = \left( \sum_{\alpha \in \mathbb{N}^d} a_{\alpha} t^{\alpha} \right) \sigma^{\otimes n},
$$
with $a_{\alpha} \in K$. Here we have set $t^{\alpha} = t_1^{\alpha_{1}}\dots t_d^{\alpha_{d}}$. The \textbf{order of vanishing} of $s$ at $x$ is defined by the formula
$$
\mathrm{ord}_{x,t}(s) = \min \{ \alpha \in \mathbb{N}^d \ | \ a_{\alpha} \neq 0 \},
$$
where the minimum is taken with respect to the lexicographic order on $\mathbb{N}^d$ : this does not depend on $\sigma$. Likewise, we define the \textbf{leading coefficient} of $s$ at $x$ as
$$
\mathrm{lead}_{x,\sigma,t}(s) = a_{\mathrm{ord}_{x,t}(s)} \neq 0.
$$
This depends in general on the choices of $t$ and $\sigma$. \\

One strategy for upper bounding the height $\lambda(s)$ of a section $s$ is to apply the product formula
$$
1 = \prod_v | \mathrm{lead}_{x,\sigma,t}(s)|_v^{k_v},
$$
and to give an upper bound of $| \mathrm{lead}_{x,\sigma,t}(s)|_v$ in terms of $|| s ||_{L^{\otimes n},v}$, which is a problem of local nature; namely it only depends on the $v$-adic metric on $L$. This motivates the introduction of the local quantities 
$$
F_{L,v}^{x,\sigma,t}(\alpha) = \sup_{\substack{s \in H^0(X,L)_v \\ \mathrm{ord}_{x,t}(s) = \alpha }} \frac{| \mathrm{lead}_{x,\sigma,t}(s)|_v}{|| s ||_{L,v}},
$$
where $\alpha$ belongs to the finite set $\mathrm{ord}_{x,t}(H^0(X,L) \setminus \{ 0\})$. It is shown in \cite{Yuan} that the quantity
$$
c_{L,v}^{x,\sigma,t}(\alpha) = \lim_{n \rightarrow \infty} \frac{1}{n} \log F_{L^{\otimes n},v}^{x,\sigma^{\otimes n},t}(\alpha_n),
$$
where $(\alpha_n)_n$ is a sequence such that $\alpha_n \in \mathrm{ord}_{x,t}(H^0(X,L^{\otimes n}) \setminus \{ 0\})$, and such that $\frac{1}{n} \alpha_n $ converges to $\alpha$, is well-defined for any $\alpha$ in the interior of the closure $\Delta_{x,t}(L)$ of the set
$$
\bigcup_{n \geq 1} \frac{1}{n} \mathrm{ord}_{x,t}(H^0(X,L^{\otimes n}) \setminus \{ 0\}).
$$
The set $\Delta_{x,t}(L)$ is a convex body in $\R^d$ : this is the\textbf{ Okounkov body }of $L$, which depends on the choice of $t = (t_1,\dots,t_d)$. For example, if $X$ is a curve then $\Delta_{x,t}(L)$ is the interval $[0, \mathrm{deg}(L)]$. Also, if $(X,L)=(\mathbb{P}_K^d,\mathcal{O}(1))$, then $\Delta_{x,t}(L)$ is a $d$-dimensional simplex, as can be seen be reducing to the case in which $x$ is the origin of $\mathbb{A}^d_K$ and $t$ is the vector of standard coordinate functions of $\mathbb{A}^d_K$. \\

The concave function
$$
c_{L,v}^{x,\sigma,t} : \alpha \in \mathring{\Delta_{x,t}(L)} \longmapsto c_{L,v}^{x,\sigma,t}(\alpha) \in \R
$$
is called the \textbf{local Chebychev transform} of $L$ at $x$. The domain $\mathring{\Delta_{x,t}(L)}$ of $c_{L,v}$ does not depend on the metric on $L$, but $c_{L,v}^{x,\sigma,t}$ itself does. \\

\begin{example} \label{ex:centered disc} Consider the particular case $(X,L) = (\mathbb{P}_{\Q}^1,\mathcal{O}(1))$, with the line bundle metric
$$
|s([x_0:x_1])|_{L,v} = \frac{|s(x_0,x_1)|_v}{\max(|x_0|_v,r_v^{-1}|x_1|_v)},
$$
for some $r_v >0$. The maximum modulus principle if $v$ is archimedean, and a direct computation otherwise, shows that when $\infty$
is the archimedean place of $\mathbb{Q}$, we have
$$
||s||_{L^{\otimes n},\infty} = \sup_{|z|_{v} = r_v} |s(1,z)|_{\infty}.
$$
Let us consider the regular point $x = [1:0]$ with a local parameter $t = \frac{X_1}{X_0}$, and a local trivialization $\sigma = X_0$. We have
$$
F_{L^{\otimes n},v}^{x,\sigma^{\otimes n},t}(\alpha) = \sup_{s \in \C[X_0,X_1]_{n-\alpha}} \frac{| s(1,0)|_v}{|| X_1^{\alpha} s ||_{L^{\otimes n},v}},
$$
with
$$
|| X_1^{\alpha} s ||_{L^{\otimes n},v} = \sup_{|z|_{\infty} = r_v} |z^{\alpha} s(1,z)|_{\infty} = r_v^{\alpha} || s ||_{L^{\otimes n-\alpha},v},
$$
so that $F_{L^{\otimes n},v}^{x,\sigma^{\otimes n},t}(\alpha)$ equals $ r_v^{-\alpha}$. In particular, we have
$$
c_{L,v}^{x,\sigma,t}(\alpha) = - \alpha \log r_v
$$
for $\alpha \in [0,1] = \Delta_{x,t}(L)$.
\end{example}

We now define the \textbf{global Chebychev transform} as the sum
$$
c_{L}^{x,t} = \sum_{v} k_v c_{L,v}^{x,\sigma,t},
$$
which still depends on $t$, but not on the choice of the local trivialization $\sigma$ any more. While this global Chebychev transform breaks down into a sum of local components, it allows to control global invariants, such as the heights of nonzero sections :
\begin{proposition}\label{prop:boundcheby} The height of a nonzero global section $s$ of $L^{\otimes n}$ satisfies
$$
\lambda(s) \leq n \sup_{\beta \in \mathring{\Delta_x(L)}}  c_{L}^{x,t}(\beta).
$$
\end{proposition}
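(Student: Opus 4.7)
The plan is to combine three ingredients: the product formula applied to the leading coefficient, the local definition of $F_{L^{\otimes n},v}^{x,\sigma^{\otimes n},t}$, and a Fekete-type superadditivity argument that extracts $c_{L,v}^{x,\sigma,t}$ from the $F_{L^{\otimes n},v}^{x,\sigma^{\otimes n},t}(\alpha)$.

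First, let $s$ be a nonzero global section of $L^{\otimes n}$, set $\alpha = \mathrm{ord}_{x,t}(s) \in \mathbb{N}^{d}$, and put $a = \mathrm{lead}_{x,\sigma^{\otimes n},t}(s) \in K^{\times}$. By definition of $F_{L^{\otimes n},v}^{x,\sigma^{\otimes n},t}(\alpha)$ one has, for every place $v$ of $K$,
$$
|a|_v \;\leq\; F_{L^{\otimes n},v}^{x,\sigma^{\otimes n},t}(\alpha) \cdot \|s\|_{L^{\otimes n},v}.
$$
Taking logarithms, multiplying by $k_v$, summing over $v$, and using the product formula $\sum_v k_v \log |a|_v = 0$, one obtains
$$
\lambda(s) \;=\; - \sum_v k_v \log \|s\|_{L^{\otimes n},v} \;\leq\; \sum_v k_v \log F_{L^{\otimes n},v}^{x,\sigma^{\otimes n},t}(\alpha).
$$

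Next, I would show that for each place $v$ and each $\alpha$ with $\beta = \alpha/n \in \mathring{\Delta_{x,t}(L)}$ one has the inequality
$$
\log F_{L^{\otimes n},v}^{x,\sigma^{\otimes n},t}(\alpha) \;\leq\; n \cdot c_{L,v}^{x,\sigma,t}(\beta).
$$
This rests on the superadditivity $F_{L^{\otimes (n_1+n_2)},v}^{x,\sigma^{\otimes(n_1+n_2)},t}(\alpha_1+\alpha_2) \geq F_{L^{\otimes n_1},v}^{x,\sigma^{\otimes n_1},t}(\alpha_1) \cdot F_{L^{\otimes n_2},v}^{x,\sigma^{\otimes n_2},t}(\alpha_2)$, which follows by multiplying sections: if $s_i$ has order $\alpha_i$ and leading coefficient $a_i$, then $s_1 s_2$ has order $\alpha_1+\alpha_2$ and leading coefficient $a_1 a_2$, while $\|s_1 s_2\|_v \leq \|s_1\|_v \|s_2\|_v$. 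Specialising to the diagonal $(n,\alpha) \mapsto (kn,k\alpha)$ and applying Fekete's lemma identifies the limit defining $c_{L,v}^{x,\sigma,t}(\beta)$ with the supremum of $\tfrac{1}{kn}\log F_{L^{\otimes kn},v}^{x,\sigma^{\otimes kn},t}(k\alpha)$ over $k\geq 1$, from which the displayed inequality follows for $k=1$. Summing over $v$ weighted by $k_v$ yields $\lambda(s) \leq n \cdot c_L^{x,t}(\beta)$, and taking the supremum over $\beta \in \mathring{\Delta_{x,t}(L)}$ gives the proposition whenever $\alpha/n$ lies in the interior of the Okounkov body.

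The main obstacle I anticipate is handling the case in which $\alpha/n$ lies on the boundary of $\Delta_{x,t}(L)$, since the local Chebyshev transform $c_{L,v}^{x,\sigma,t}$ is only defined on the interior. Here I would exploit the fact that $\tfrac{1}{m}\alpha + \tfrac{m-1}{m}\beta_0$ lies in the interior for every interior point $\beta_0$ and every $m\geq 1$; applied with a sequence $m\to\infty$, the concavity of each $c_{L,v}^{x,\sigma,t}$ together with the superadditivity of the $F$-functions allows one to conclude that $\log F_{L^{\otimes n},v}^{x,\sigma^{\otimes n},t}(\alpha)$ is still bounded above by the supremum of $n\,c_{L,v}^{x,\sigma,t}$ over the interior (equivalently, by its upper semi-continuous extension to the closure). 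Summing over $v$ completes the proof in full generality.
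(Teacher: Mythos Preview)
Your proof follows the same strategy as the paper's: apply the product formula to $\mathrm{lead}_{x,\sigma,t}(s)$ to obtain $\lambda(s)\le \sum_v k_v\log F_{L^{\otimes n},v}^{x,\sigma^{\otimes n},t}(\alpha)$, and then bound each $\log F$ by $n\,c_{L,v}^{x,\sigma,t}(\alpha/n)$. The paper simply asserts the inequality $F_{L^{\otimes n},v}^{x,\sigma^{\otimes n},t}(\alpha)\le e^{n c_{L,v}^{x,\sigma,t}(\alpha/n)}$ in one line; your superadditivity/Fekete argument is precisely what justifies it, and is correct.

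You are also right to flag the boundary issue, which the paper's proof passes over (it writes $c_{L}^{x,t}(\tfrac{1}{n}\alpha)$ as though $\tfrac{1}{n}\alpha$ were always interior). Your idea of perturbing into the interior is the correct one, but the convex combination is written the wrong way around: $\tfrac{1}{m}\beta+\tfrac{m-1}{m}\beta_0$ tends to $\beta_0$, not to the boundary point $\beta=\alpha/n$. A clean way to carry out what you intend is to choose any section $s_0\in H^0(X,L^{\otimes n_0})$ whose normalized order $\alpha_0/n_0$ lies in the interior, note that $s^k s_0$ has normalized order $\tfrac{k\alpha+\alpha_0}{kn+n_0}$, which is a genuine convex combination of $\beta$ and an interior point and hence interior for every $k\ge 1$, apply the interior case to get $\lambda(s^k s_0)\le (kn+n_0)\sup_{\mathring{\Delta}} c_L^{x,t}$, combine with $\lambda(s^k s_0)\ge k\lambda(s)+\lambda(s_0)$ (submultiplicativity of sup norms), divide by $k$ and let $k\to\infty$.
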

\begin{proof} If a section $s \in H^0(X,L^{\otimes n}) \setminus \{ 0\}$ vanishes at order $\alpha$ at $x$, then one has
$$
| \mathrm{lead}_{x,\sigma,t}(s)|_v \leq F_{L^{\otimes n},v}^{x,\sigma^{\otimes n},t}(\alpha) || s ||_{L^{\otimes n},v} \leq e^{n c_{L,v}^{x,\sigma,t}(\frac{1}{n}\alpha)}  || s ||_{L^{\otimes n},v}.
$$
Raising this inequality to the power $k_v$, and taking the product over all places $v$ yields
$$
1 = \prod_v | \mathrm{lead}_{x,\sigma,t}(s)|_v^{k_v} \leq e^{n c_{L}^{x,t}(\frac{1}{n}\alpha)} \prod_v  || s ||_{L^{\otimes n},v}^{k_v} = e^{n c_{L}^{x,t}(\frac{1}{n}\alpha) - \lambda(s)},
$$
so that $\lambda (s) \leq n c_{L}^{x,t}(\frac{1}{n}\alpha) \leq n\sup_{\beta \in \mathring{\Delta_{x,t}(L)}}  c_{L}^{x,t}(\beta)$.
\end{proof}

Likewise, a theorem of Yuan \cite{Yuan} ensures that under the hypotheses of Theorem \ref{thm:ChenTheorem}, the mean value of $c_{L}^{x,t}$ computes the expectation of the limit distribution $\nu$ appearing
in Theorem \ref{thm:ChenTheorem}:
$$
\frac{1}{\mathrm{vol}(\Delta_{x,t}(L))} \int_{\mathring{\Delta_{x,t}(L)}} c_{L}^{x,t}(\alpha) \mathrm{d}  \alpha  = \int_{\R} x \mathrm{d}   \nu.
$$
In particular, if $c_L^{x,t}$ is a constant function, then by the preceding proposition, the left hand side is an upper bound for the support of $\nu$, so that the expectation of $\nu$ is an upper bound for its support. This proves:

\begin{subcorollary} 
\label{s:diracmeasureChebychev}If the global Chebychev transform $c_L^{x,t}$ is a constant function, then the limit distribution $\nu$ is a Dirac measure supported at one point.
\end{subcorollary}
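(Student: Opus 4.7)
The plan is to combine Proposition \ref{prop:boundcheby} with the Yuan formula for the mean value of $c_L^{x,t}$ stated just before the corollary, and then use a simple measure-theoretic fact about probability measures whose expectation equals the upper bound of their support.

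First I would set $C$ to be the constant value of $c_L^{x,t}$ on $\mathring{\Delta_{x,t}(L)}$. By Proposition \ref{prop:boundcheby}, every nonzero section $s \in H^0(X,L^{\otimes n})$ satisfies $\lambda(s)/n \leq C$. Since the naive adelic successive maxima $\lambda_{i,n}$ of $H^0(X, L^{\otimes n})$ are defined via heights of nonzero sections, we get $\lambda_{i,n}/n \leq C$ for all $i$ and all $n$. Taking the weak limit provided by Theorem \ref{thm:ChenTheorem}, this implies that the support of $\nu$ is contained in $(-\infty, C]$.

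Next I would invoke Yuan's formula displayed immediately before the corollary: the expectation of $\nu$ is the mean value of $c_L^{x,t}$ over $\Delta_{x,t}(L)$. When $c_L^{x,t}$ is identically equal to $C$, this mean value is simply $C$, so $\int_{\R} x\, d\nu(x) = C$.

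The conclusion then follows from the elementary observation that if a probability measure $\nu$ on $\R$ is supported in $(-\infty, C]$ and has expectation $C$, then $\int_{\R}(C - x)\, d\nu(x) = 0$ with a nonnegative integrand, forcing $C - x = 0$ $\nu$-almost everywhere and hence $\nu = \delta_C$. There is no real obstacle here; the content has already been absorbed into Proposition \ref{prop:boundcheby} and into the cited result of Yuan, so the corollary reduces to this last one-line measure-theoretic argument.
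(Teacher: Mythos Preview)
Your proof is correct and follows essentially the same approach as the paper: both combine Proposition \ref{prop:boundcheby} (giving $C$ as an upper bound for the support of $\nu$) with Yuan's mean-value formula (giving $C$ as the expectation of $\nu$), and then conclude via the elementary fact that a probability measure whose expectation equals an upper bound of its support must be a Dirac mass. Your write-up is in fact more explicit than the paper's one-sentence sketch.
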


Intuitively, the limit distribution $\nu$ is expected to be completely described by $c_L^{x,t}$ when the zeroes of sections of large height concentrate at the point $x$. Since this is not the case in general (see for instance the introductory paragraph of Section \ref{s:SerreMeasures}), we should obtain better results by considering
$$
 \sup_{\substack{s \in H^0(X,L)_v \\ \mathrm{ord}_{x_1,t}(s) = \alpha_1, \dots,  \mathrm{ord}_{x_r,t}(s) = \alpha_r }} \frac{| \mathrm{lead}_{x_1,\sigma,t}(s)|_v}{|| s ||_{L,v}},
$$
where $x_1, \dots, x_r$ are distinct rational regular points (with a choice of local parameters at each of these points).

% In section [?], we will compute this quantity for $r=2$, and a particular choice of $x_1,x_2$, in the context of example [???].

\subsection{Computation of Chebychev local transforms at archimedean places : the $L^2$ method.} Here we assume for simplicity that $X$ is a curve, i.e. $d=1$, so that $\Delta_{x,t}(L)=[0,D]$ where $D = \mathrm{deg}(L)$, and we focus on a particular archimedean place $v$. We choose a volume form $\mathrm{d} V$ on $X(\C_v)$, so that $H^0(X,L^{\otimes n})_v$ is endowed with the hermitian norm
$$
|| s ||_{L^{\otimes n},v,\mathrm{herm}}^2 = \int_{X(\C_v)} |s(x)|_{L^{\otimes n},v}^2 \mathrm{d} V(x).
$$
One can show using Gromov's lemma (see \cite[Lemma 2.7]{Yuan} and \cite[Prop. 2.13]{YuanBig}) that the Chebychev local transform $c_L^{x,t}(\alpha)$ can be computed using
$$
F_{L^{\otimes n},v, \mathrm{herm}}^{x,\sigma^{\otimes n},t}(\alpha) = \sup_{\substack{s \in H^0(X,L^{\otimes n})_v \\ \mathrm{ord}_{x,t}(s) = \alpha }} \frac{| \mathrm{lead}_{x,\sigma^{\otimes n},t}(s)|_v}{|| s ||_{L^{\otimes n},v, \mathrm{herm}}}
$$
instead of $F_{L^{\otimes n},v}^{x,\sigma^{\otimes n},t}(\alpha) $. Let us denote by $[\alpha]$ the linear form on $H^0(X,L^{\otimes n}(- \alpha x))_v$ which takes a section $s$ to the coefficient of $t^{\alpha}$ in its Taylor series expansion around $x$, so that
$$
F_{L^{\otimes n},v, \mathrm{herm}}^{x,\sigma^{\otimes n},t}(\alpha) = \sup_{s \in H^0(X,L^{\otimes n}(- \alpha x))_v } \frac{| [\alpha](s)|_v}{|| s ||_{L^{\otimes n},v, \mathrm{herm}}} 
$$
is the operator norm of $[\alpha]$ on the hermitian space $H^0(X,L^{\otimes n}(- \alpha x))_v$. In particular, if $(s_{x,\alpha,j})_j$ is an orthonormal basis of $H^0(X,L^{\otimes n}(- \alpha x))_v$, then we have
$$
F_{L^{\otimes n},v, \mathrm{herm}}^{x,\sigma^{\otimes n},t}(\alpha)^2 =  \sum_j | [\alpha](s_{x,\alpha,j})|_v^2.
$$
For $\alpha = 0$, this equals the value of the $n$-th Bergman kernel at $x$, for which precise asymptotics are known. The case $\alpha >0$ is much more elusive in general, but we will see in the remaining of this section how to handle completely the case of the Fubini-Study metric, and partially the case of the capacity metric of a disc on the projective line, by computing $F_{L^{\otimes n},v, \mathrm{herm}}^{x,\sigma^{\otimes n},t}(\alpha)$ with an explicit orthonormal basis. \\

\subsection{The $L^2$ method in use : the Chebychev local transform of the capacity metric of a disc. \label{subsubsec: L2method}} Let us consider $(X(\C_v),L) = (\mathbb{P}^1(\C_v),\mathcal{O}(1)$, with the line bundle metric
$$
|s([x_0:x_1])|_{L,v} = \frac{|s(x_0,x_1)|_v}{\max(|x_0|_v,r_v^{-1} |x_1|_v)},
$$
at an archimedean place $v$, which is the capacity metric associated to a disc of radius $r_v$ in the complex projective line, just as in example \ref{ex:centered disc}. Contrary to the situation considered in \ref{ex:centered disc}, we choose the point $x = [1:r_v]$ with a local parameter $t = \frac{X_1 - r_v X_0}{X_0}$, and a local trivialization $\sigma = X_0$. Instead of considering a volume form $\mathrm{d} V$ as above, we rather use the distribution $\mathrm{d} V$ defined by
$$
\int_{\mathbb{P}^1(\C_v)} f \mathrm{d} V = \frac{1}{4} \int_{-\pi}^{\pi} f([1:r_v e^{i \theta}]) |\sin(\theta)| \mathrm{d} \theta.
$$
By approximating this distribution by volume forms, one can check that the corresponding $F_{L^{\otimes n},v, \mathrm{herm}}$ still computes $c_{L,v}$. We now show that we have the formula
$$
F_{L^{\otimes {2n}},v, \mathrm{herm}}^{x,\sigma^{\otimes 2n},t}(2\alpha)^2 = 4^{-2\alpha}r_v^{-4\alpha}  \sum_{j=0}^{n-\alpha} (2j+2\alpha+1) \binom{j+2\alpha}{j}^2.
$$
Using Stirling's formula, this will imply the following :

\begin{proposition}\label{prop:L2} With $x,\sigma,t$ as above, the local Chebychev transform of the capacity metric associated to a disc of radius $r_v$ on the complex projective line, as defined above, with respect to a point on the boundary of the disc, is given by the formula
$$
c_{L,v}^{x,\sigma,t}(\alpha) = -\alpha \log(4r_v) + \frac{1}{2}(1+\alpha) \log(1+\alpha)-\frac{1}{2}(1-\alpha) \log(1-\alpha) - \alpha \log(\alpha).
$$
for $\alpha \in [0,1]$.
\end{proposition}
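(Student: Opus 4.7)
The plan is to first establish the exact closed form
\[
F_{L^{\otimes 2n},v,\mathrm{herm}}^{x,\sigma^{\otimes 2n},t}(2\alpha)^{2} = 4^{-2\alpha}r_{v}^{-4\alpha}\sum_{j=0}^{n-\alpha}(2j+2\alpha+1)\binom{j+2\alpha}{j}^{2}
\]
and then to extract $c_{L,v}^{x,\sigma,t}(\alpha)$ via Stirling's formula. Every section $s\in H^{0}(X,L^{\otimes 2n}(-2\alpha x))$ can be written $s/\sigma^{\otimes 2n}=(u-r_{v})^{2\alpha}q(u)$ with $u=X_{1}/X_{0}$ and $q\in\mathbb{C}[u]_{\le 2(n-\alpha)}$, so $\mathrm{lead}_{x,\sigma^{\otimes 2n},t}(s)=q(r_{v})$. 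Set $\tilde{q}(\zeta)=q(r_{v}\zeta)$. Using $|u-r_{v}|^{2}=2r_{v}^{2}(1-\cos\theta)$ and $|\sin\theta|=2|\sin(\theta/2)|\cos(\theta/2)$ for $u=r_{v}e^{i\theta}$, a direct change of variables yields
\[
\|s\|_{\mathrm{herm}}^{2} = \frac{(4r_{v}^{2})^{2\alpha}}{2^{2\alpha+2}}\int_{-\pi}^{\pi}\bigl|\tilde{q}(e^{i\theta})\bigr|^{2}(1-\cos\theta)^{2\alpha}|\sin\theta|\,d\theta,
\]
so that $F^{2}=\tfrac{2^{2\alpha+2}}{(4r_{v}^{2})^{2\alpha}}\,K_{N}(1,1)$, where $K_{N}$ is the reproducing kernel at $\zeta=1$ of $\mathbb{C}[\zeta]_{\le N}$ (with $N=2(n-\alpha)$) for the weight $W(\theta)=(1-\cos\theta)^{2\alpha}|\sin\theta|$ on the unit circle.

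The key step is to diagonalize this reproducing kernel via a palindromic/antipalindromic decomposition of $\mathbb{C}[\zeta]_{\le N}$. Because $W$ is even in $\theta$, the substitution $k\leftrightarrow N-k$ in the Fourier expansion $\sum_{k,l}c_{k}\overline{c'_{l}}\,\widehat{W}(k-l)$ reverses the sign of that sum, so palindromic polynomials ($c_{k}=c_{N-k}$) are orthogonal to antipalindromic ones ($c_{k}=-c_{N-k}$). Every antipalindromic polynomial vanishes at $\zeta=1$ (coefficients cancel in pairs and $c_{N/2}=0$), so $K_{N}(1,1)$ is supported on the palindromic subspace. Setting $M=n-\alpha$, every palindromic polynomial of degree at most $2M$ factors uniquely as $\tilde{q}(\zeta)=\zeta^{M}g(y)$ with $y=(\zeta+\zeta^{-1})/2$ and $g\in\mathbb{C}[y]_{\le M}$, using $\zeta^{j}+\zeta^{-j}=2T_{j}(y)$. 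Under this bijection $\tilde{q}(1)=g(1)$, and an elementary change of variables converts the inner product into the classical Jacobi form
\[
\langle\tilde{q}_{1},\tilde{q}_{2}\rangle = 2\int_{-1}^{1}g_{1}(y)\overline{g_{2}(y)}(1-y)^{2\alpha}\,dy.
\]

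This is a Jacobi inner product on $\mathbb{C}[y]_{\le M}$ with parameters $(2\alpha,0)$; its orthonormal basis is built from the polynomials $P_{j}^{(2\alpha,0)}$. The standard identities $P_{j}^{(2\alpha,0)}(1)=\binom{j+2\alpha}{j}$ and $\int_{-1}^{1}(P_{j}^{(2\alpha,0)})^{2}(1-y)^{2\alpha}\,dy=\tfrac{2^{2\alpha+1}}{2j+2\alpha+1}$ give
\[
K_{N}(1,1) = \frac{1}{2^{2\alpha+2}}\sum_{j=0}^{n-\alpha}(2j+2\alpha+1)\binom{j+2\alpha}{j}^{2},
\]
and multiplication by $\tfrac{2^{2\alpha+2}}{(4r_{v}^{2})^{2\alpha}}$ produces the claimed exact formula for $F^{2}$.

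Finally, to obtain $c_{L,v}^{x,\sigma,t}(\alpha)$, take $n\to\infty$ with integers $\alpha_{n}$ satisfying $\alpha_{n}/n\to\alpha$. The prefactor $4^{-2\alpha_{n}}r_{v}^{-4\alpha_{n}}$ contributes $-\alpha\log(2r_{v})$ to $\tfrac{1}{2n}\log F=\tfrac{1}{4n}\log F^{2}$. For the sum, writing $t=j/n$, Stirling's formula gives
\[
\log\!\left[(2j+2\alpha_{n}+1)\binom{j+2\alpha_{n}}{j}^{2}\right]=2nf(t)+O(\log n),\quad f(t)=(t+2\alpha)\log(t+2\alpha)-2\alpha\log(2\alpha)-t\log t.
\]
Since $f'(t)=\log(1+2\alpha/t)>0$, Laplace's method localizes the sum at the upper endpoint $t=1-\alpha$, so $\tfrac{1}{4n}\log(\text{sum})\to\tfrac{1}{2}f(1-\alpha)$; expanding and adding $-\alpha\log(2r_{v})$ yields the stated formula after the elementary identity $\tfrac{1}{2}f(1-\alpha)-\alpha\log 2=\tfrac{1}{2}(1+\alpha)\log(1+\alpha)-\tfrac{1}{2}(1-\alpha)\log(1-\alpha)-\alpha\log\alpha-\alpha\log 2$. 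The main obstacle is the palindromic/antipalindromic decomposition: it is an incarnation of the classical correspondence between orthogonal polynomials on the unit circle and on $[-1,1]$, and it is what causes the effective Hilbert-space dimension to collapse from $2(n-\alpha)+1$ to $n-\alpha+1$, thereby dictating where the sum in the closed-form formula is truncated.
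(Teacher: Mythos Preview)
Your proof is correct and follows essentially the same route as the paper: the palindromic/antipalindromic split is exactly the paper's orthogonal decomposition $V^{+}\oplus V^{-}$ under $(X_0,X_1)\mapsto(r_v^{-1}X_1,r_vX_0)$, and your reduction via $y=(\zeta+\zeta^{-1})/2$ to Jacobi polynomials with weight $(1-y)^{2\alpha}$ on $[-1,1]$ is the same as the paper's map $\Psi:T\mapsto T(r_vX_0X_1,\,r_v^2X_0^2+X_1^2)$ followed by the substitution $y=2\cos\theta$. One small slip: your final ``elementary identity'' is off by $\alpha\log 2$ (cancel $-\alpha\log 2$ from both sides and compare with the direct expansion of $\tfrac12 f(1-\alpha)$), but the preceding computations of the prefactor and of $\tfrac12 f(1-\alpha)$ are correct, so the total $-\alpha\log(2r_v)+\tfrac12 f(1-\alpha)$ does give the stated formula.
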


In order to compute $F_{L^{\otimes {2n}},v, \mathrm{herm}}^{x,\sigma^{\otimes 2n},t}(2\alpha)$, let us consider the orthogonal decomposition
$$
\C_v[X_0,X_1]_{2n-2k} =V^+ \oplus V^-,
$$
where $V^{\pm}$ is the space of polynomials $s \in \C_v[X_0,X_1]_{2n-2k}$ such that $s(X_0,X_1) = \pm s(r_v^{-1} X_1,r_v X_0)$. Since any $s$ in $V^-$ satisfies $s(1,r_v) = 0$, we get
$$
 F_{L^{\otimes {2n}},v, \mathrm{herm}}^{x,\sigma^{\otimes 2n},t}(2\alpha) = \sup_{s \in V^+} \frac{|s(1,r_v)|_v}{|| (X_1-r_v X_0)^{2 \alpha} s ||_{L^{\otimes n},v, \mathrm{herm}}}.
$$
However, the linear map 
$$
\Psi : T=T(Y_0,Y_1) \in \C_v[Y_0,Y_1]_{n-k} \longmapsto T(r_v X_0 X_1,r_v^2 X_0^2 + X_1^2) \in V^+
$$
is an isomorphism, with
\begin{align*}
|| (X_1-r_v X_0)^{2 \alpha} \Psi(T) ||_{L^{\otimes n},v, \mathrm{herm}}^2 &= \frac{r_v^{4n}}{4} \int_{-\pi}^{\pi} |T( 1, 2 \cos(\theta))|^2  |\sin(\theta)| |e^{i \theta} - 1|^{4 \alpha} \mathrm{d} \theta \\
&= \frac{r _v^{4n}}{4} \int_{-2}^{2} |T( 1, y)|^2 (2-y)^{2 \alpha} \mathrm{d} y,
\end{align*}
by using the substitution $y = 2 \cos(\theta)$. There is an explicit orthogonal basis of $\C_v[Y_0,Y_1]_{n-\alpha}$ for this scalar product, given by the Jacobi polynomials
$$
\mathrm{Jac}_{\alpha,j}(Y_0,Y_1) = (2Y_0 - Y_1)^{-2 \alpha} \frac{\partial^j}{j! \partial Y_1^j} Y_0^{n-\alpha-j}(2Y_0 + Y_1)^j (2Y_0 - 2Y_1)^{j + 2 \alpha}.
$$
for $0 \leq j \leq n-\alpha$. The explicit formulae
\begin{align*}
\Psi(\mathrm{Jac}_{\alpha,j})(1,r_v) &= r_v^{2n-2\alpha} (-4)^j \binom{j+2\alpha}{j},\\
|| (X_1-r_v X_0)^{2 \alpha} \Psi(\mathrm{Jac}_{\alpha,j}) ||_{L^{\otimes n},v, \mathrm{herm}} &= r_v^{2n} 4^{j+\alpha} (2j+2\alpha+1)^{-\frac{1}{2}},
\end{align*}
yield
\begin{align*}
F_{L^{\otimes {2n}},v, \mathrm{herm}}^{x,\sigma^{\otimes 2n},t}(2\alpha)^2 &= \sum_{j=0}^{n-\alpha} \frac{|\Psi(\mathrm{Jac}_{\alpha,j})(1,r_v)|_v^2}{|| (X_1-r_v X_0)^{2 \alpha} \Psi(\mathrm{Jac}_{\alpha,j}) ||_{L^{\otimes n},v, \mathrm{herm}}^2}
\\
&= 4^{-2\alpha} r_v^{-4\alpha} \sum_{j=0}^{n-\alpha} (2j+2\alpha+1) \binom{j+2\alpha}{j}^2,
\end{align*}
hence the result. 

\begin{example}\label{ex:quarterdisc}  Let us consider $(X,L) = (\mathbb{P}_{\Q}^1,\mathcal{O}(1))$, with the line bundle metric
$$
|s([x_0:x_1])|_{L,v} = \frac{|s(x_0,x_1)|_v}{\max(|x_0|_v,|x_1|_v)},
$$
for non-archimedean $v$, and 
$$
|s([x_0:x_1])|_{L,v} = \frac{|s(x_0,x_1)|_v}{\max(|x_0|_v,|4x_1 - 1|_v)},
$$
at the archimedean place. We pick the point $x = [1:0]$, with the parameter $t = \frac{x_1}{x_0}$. Then Proposition \ref{prop:L2} yields 
$$
c_{L,v}^{x,\sigma,t}(\alpha) =  \frac{1}{2}(1+\alpha) \log(1+\alpha)-\frac{1}{2}(1-\alpha) \log(1-\alpha) - \alpha \log(\alpha),
$$
which attains its maximum $\log(1+\sqrt{2}) = 0,881...$ at $\alpha = \frac{1}{\sqrt{2}}$. By Proposition \ref{prop:boundcheby} and by using Example \ref{ex:centered disc} for the archimedean places, we obtain that any nonzero global section $s$ of $\mathcal{O}(n)$ satisfies
$$
\frac{1}{n} \lambda(s) \leq \log(1+\sqrt{2}) < 0,89.
$$
On the other hand, the section
$$
s =  X_1^{34} (2X_1-X_0)^{6} (5X_1^2 - 4X_1X_0+X_0^2)^3 (29X_1^4 - 44X_1^3 X_0 +27 X_1^2 X_0^2 - 8X_1 X_0^3 + X_0^4),
$$
of $\mathcal{O}(50)$, labeled as $s_{50}$ in the introductory paragraph of Section \ref{s:SerreMeasures}, satisfies
$$
\frac{1}{n} \lambda(s) > 0,82.
$$
By taking logarithms, we obtain that for large $n$ the smallest supremum norm on the disc of radius $\frac{1}{4}$ and center $\frac{1}{4}$, of a nonzero polynomial of degree $n$ with integer coefficients is a quantity between $0,42^n$ and $0,44^n$. The change of variable $(X_0',X_1') = (X_0^2,X_1(X_0-X_1))$ yields that the corresponding quantity for a disc of radius $\frac{1}{2}$ and center $\frac{1}{2}$ is between $0,64^n$ and $0,67^n$.

\end{example}

\subsection{The $L^2$ method in use :  the Chebychev local transform of the Fubini-Study-metric.} Let us consider the complex projective space $(X(\C_v),L) = (\mathbb{P}^d(\C_v),\mathcal{O}(1))$, with the Fubini-Study metric
$$
|s([x_0:x_1])|_{L,v} = \frac{|s(x_0,x_1)|_v}{\sqrt{|x_0|_v^2 + \dots |x_d|_v^2}},
$$
at an archimedean place $v$. We pick a point $x = [x_0: \dots : x_d]$ of  $\mathbb{P}^d(\C_v)$. We have a natural identification
$$
\mathrm{T}_x \mathbb{P}^d(\C_v) = \{ T \in \C_v[X_0,\dots,X_d]_1 \ | \ T(x_0,\dots,x_d) = 0 \}.
$$ 
Let $Y_1,\dots,Y_{d+1}$ be a linear basis of $\C_v[X_0,\dots,X_d]_1$, such that $Y_1, \dots,Y_d$ span $\mathrm{T}_x \mathbb{P}^d(\C_v)$ under the identification above. The functions $t_j = \frac{Y_j}{Y_{d+1}}$, for $j=1,\dots,d$, then form a system of local parameters at $x$, while $\sigma = Y_{d+1}$ is a local trivialization of $L$ around $x$. \\
We proceed as in section \ref{subsubsec: L2method}, using the Fubini-Study volume form $dV = \frac{\omega_{FS}^d}{d!}$, where
$$
\omega_{FS} = i \partial \overline{\partial} \log(|X_0|^2+\dots+|X_d|^2).
$$
Let $U_1,\dots,U_{d+1}$ be the output of the Gram-Schmidt orthonormalization process applied to the basis $Y_1,\dots,Y_{d+1}$. In particular, $U_1,\dots,U_{d+1}$ form an orthonormal basis of $\C_v[X_0,\dots,X_d]_1$, and each coefficient
$$
\gamma_j = \langle Y_j |  U_j \rangle^{-1} 
$$
is strictly positive. Again, the functions $u_j = \frac{U_j}{U_{d+1}}$, for $j=1,\dots,d$, form a system of local parameters at $x$, and $\tau =  U_{d+1}$ is a local trivialization of $L$ around $x$. One can check the formulae
\begin{align*}
\mathrm{ord}_{x,t}(s) &= \mathrm{ord}_{x,u}(s) \\
\mathrm{lead}_{x,\sigma^{\otimes n},t}(s) &= \gamma^{\alpha} \gamma_{d+1}^{\alpha_{d+1}} \mathrm{lead}_{x,\tau^{\otimes n},u}(s) \ \text{ if } \alpha = \mathrm{ord}_{x,t}(s) \in \N^d,
\end{align*}
with $\alpha_{d+1} = 1-\sum_{j=1}^d \alpha_j$. In particular, we have,
$$
F_{L^{\otimes n},v, \mathrm{herm}}^{x,\sigma^{\otimes n},t}(\alpha) = \gamma^{\alpha} \gamma_{d+1}^{\alpha_{d+1}} F_{L^{\otimes n},v, \mathrm{herm}}^{x,\tau^{\otimes n},u}(\alpha).
$$
Since the sections $U_1^{\alpha_1} \dots U_d^{\alpha_d} U_{d+1}^{n-\alpha_1-\dots-\alpha_d} = u^{\alpha} \tau^n$, for $\alpha_1 + \dots \alpha_d \leq n$, form an orthogonal basis of the hermitian space
$$
\C_v[X_0,\dots,X_d]_n = H^0(X,L^{\otimes n}),
$$
we have by an elementary computation
$$
F_{L^{\otimes n},v, \mathrm{herm}}^{x,\tau^{\otimes n},u}(\alpha) = || U_1^{\alpha_1} \dots U_d^{\alpha_d} U_{d+1}^{n-\alpha_1-\dots-\alpha_d}||_{L^{\otimes n},v,\mathrm{herm}}^{-1} = V^{-\frac{1}{2}} \binom{n+d}{d,\alpha_1,\dots,\alpha_d}^{\frac{1}{2}},
$$
where $V = \frac{\pi^d}{d!}$ is the volume of $\mathbb{P}^d(\C_v)$ with respect to $dV$. Using Stirling's formula, we get the following result :

\begin{proposition} With $x,\sigma,t,\beta,\gamma$ as above, the Chebychev local transform of the Fubini-Study metric on the $d$-dimensional projective space is given by the formula 
$$
c_{L,v}^{x,\sigma,t}(\alpha) = \sum_{j=1}^{d+1} \alpha_i \log(\gamma_j) + \frac{1}{2} h_d(\alpha)
$$
on the Okounkov body
$$
\Delta_{x,t}(L) = \{ \alpha \in \R_+^d \ | \ \alpha_1 + \dots \alpha_d \leq 1 \},
$$
where $h_d$ is the entropy functional, defined by
$$
h_d : \alpha \in \Delta_{x,t}(L) \longmapsto \sum_{j=1}^{d+1} \alpha_j  \log \left( \frac{1}{\alpha_j}\right) \ \text{ where } \alpha_{d+1} = 1-\sum_{j=1}^d \alpha_j.
$$
\end{proposition}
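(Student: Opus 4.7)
The plan is to compute the Chebychev local transform by reducing to the case of an orthonormal frame (where the computation of the norms $F_{L^{\otimes n},v,\mathrm{herm}}$ becomes transparent) and then applying Stirling's formula to extract the entropy term.

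First I would observe that the statement reduces, via the already established identity
\[
F_{L^{\otimes n},v,\mathrm{herm}}^{x,\sigma^{\otimes n},t}(\alpha_n) = \gamma^{\alpha_n}\gamma_{d+1}^{\alpha_{n,d+1}}\,F_{L^{\otimes n},v,\mathrm{herm}}^{x,\tau^{\otimes n},u}(\alpha_n),
\]
to the computation of $c_{L,v}^{x,\tau,u}$ associated with the orthonormal frame. Indeed, passing to logarithms, dividing by $n$, and letting $n\to\infty$ with $\tfrac{1}{n}\alpha_n\to\alpha$, the prefactor $\gamma^{\alpha_n}\gamma_{d+1}^{\alpha_{n,d+1}}$ contributes exactly $\sum_{j=1}^{d+1}\alpha_j\log\gamma_j$, setting $\alpha_{d+1}=1-\sum_{j\leq d}\alpha_j$. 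The remainder of the proof reduces to showing $\tfrac{1}{n}\log F_{L^{\otimes n},v,\mathrm{herm}}^{x,\tau^{\otimes n},u}(\alpha_n)\to \tfrac12 h_d(\alpha)$.

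Next, I would exhibit the explicit orthogonal basis. Because $(U_1,\dots,U_{d+1})$ is orbit-equivalent to the standard coordinate basis $(X_0,\dots,X_d)$ under a unitary transformation, and because the Fubini--Study volume form is invariant under the unitary group, it suffices to verify the orthogonality and norm formula for standard monomials $X^\beta$. Rotation-invariance of $dV$ under each $X_j\mapsto e^{i\theta_j}X_j$ kills all mixed terms, so $(U^\beta)_{|\beta|=n}$ is orthogonal; the squared norm is computed by the classical integral
\[
\|X^\beta\|_{L^2}^2 = \int_{\mathbb{P}^d(\mathbb{C}_v)}\frac{|X_0|^{2\beta_0}\cdots|X_d|^{2\beta_d}}{(|X_0|^2+\cdots+|X_d|^2)^n}\,dV = \frac{V}{\binom{n+d}{d,\beta_0,\dots,\beta_d}},
\]
where $V=\pi^d/d!$ and $\binom{n+d}{d,\beta_0,\dots,\beta_d}=(n+d)!/(d!\beta_0!\cdots\beta_d!)$ is the standard multinomial. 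Since the coordinate vector picked out by $[\alpha]$ acting on $s=\sum c_\beta U^{\beta'}U_{d+1}^{n-|\beta'|}\tau^n$ is $c_\alpha$, and since the $U^{\beta'}U_{d+1}^{n-|\beta'|}$ are pairwise orthogonal, the operator norm of $[\alpha]$ is attained on the single basis vector $U^\alpha U_{d+1}^{n-|\alpha|}$, which yields
\[
F_{L^{\otimes n},v,\mathrm{herm}}^{x,\tau^{\otimes n},u}(\alpha) = V^{-1/2}\binom{n+d}{d,\alpha_1,\dots,\alpha_d}^{1/2}.
\]

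Finally, I would apply Stirling's formula. Writing $\log k!\sim k\log k - k$ and inserting $\alpha_{n,j}\sim n\alpha_j$ and $n-|\alpha_n|\sim n\alpha_{d+1}$, the $\log n$ contributions from $(n+d)!$ and the factorials of the parts cancel (because $\sum_{j=1}^{d+1}\alpha_j=1$), leaving
\[
\frac{1}{n}\log\binom{n+d}{d,\alpha_{n,1},\dots,\alpha_{n,d}} \;\longrightarrow\; -\sum_{j=1}^{d+1}\alpha_j\log\alpha_j \;=\; h_d(\alpha).
\]
Combined with the contribution from the $\gamma_j$'s, this gives exactly the claimed formula on $\mathring{\Delta_{x,t}(L)}$. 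The main subtlety, and the only point requiring care, is the verification that the supremum defining $F$ at $\alpha$ is realized by the single extremal monomial $U^\alpha U_{d+1}^{n-|\alpha|}$; this is the place where the orthogonality of the $U$-monomial basis is essential, together with the observation that $[\alpha]$ projects onto a one-dimensional subspace of the hermitian space $H^0(X,L^{\otimes n}(-\alpha x))_v$.
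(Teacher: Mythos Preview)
Your proposal is correct and follows essentially the same approach as the paper: reduce to the orthonormal frame via the already-established prefactor identity, use that the $U$-monomials form an orthogonal basis for the $L^2$ Fubini--Study inner product so that $F_{L^{\otimes n},v,\mathrm{herm}}^{x,\tau^{\otimes n},u}(\alpha)$ equals the reciprocal norm of the single monomial $U^\alpha U_{d+1}^{n-|\alpha|}$, and then apply Stirling to the multinomial coefficient. The paper in fact carries out all of this computation in the text preceding the proposition and then simply invokes Stirling's formula to state the result, so your write-up matches the intended argument closely.
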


 \section{Measures associated to zeros of sections }
 \label{s:SerreMeasures}
 
\subsection{An Example}
\label{s:AnExample}

Recall from \S \ref{s:ChenM} that $H^0(X,L^{\otimes n})^{\geq \lambda}$ denotes the set of of sections of $L^{\otimes n}$ of slope at least $\lambda$.
In \S \ref{s:SerreMeasures}
and \S \ref{s:Capacityone} we will study the zeros of the non-zero elements of 
 $\cup_{n = 1}^\infty H^0(X,L^{\otimes n})^{\geq \lambda}$. To motivate this we first discuss an example. 
 
Let $X = \mathbb{P}_{\mathbb{Q}}^1$ and $L = \mathcal{O}(1)$. As in Example \ref{ex:quarterdisc}, we endow $L$ with  the non archimedean metrics coming from the integral model $(\mathbb{P}_{\mathbb{Z}}^1,\mathcal{O}(1))$, and the archimedean metric given in affine coordinates by
 $$
 |s(z)|_{L,\infty} = \frac{|s(z)|}{\max ( 1 , |4z-1| )}.
 $$
 This is the capacity metric associated to the disc of center $\frac{1}{4}$ and radius $\frac{1}{4}$. For the sake of the computation, we rather use the $L^2$ metric on the boundary of this disc, rather than the supremum norm : this does not affect the asymptotic slopes. 
 
 Let $s_n$ denote a degree $n$ nonzero integer polynomial of smallest norm. A computation performed with Magma yields a small list of explicit irreducible integer polynomials $f_1,f_2,f_3,\dots$, starting with
\begin{align*}
 f_1 &= z,\\
 f_2 &= 2z-1,\\ 
 f_3 &= 5z^2 - 4z+1, \\
 f_4 &= 29z^4 - 44z^3 +27 z^2 - 8z + 1,
\end{align*}
 such that
\begin{align*}
 s_{50} &= \pm f_1^{34} f_2^{6} f_3^3 f_4,\\
 s_{100} &= \pm f_1^{63} f_2^{11} f_3^4 f_4 f_5 f_6,\\
 s_{200} &= \pm f_1^{127} f_2^{23} f_3^8 f_4^3 f_5 f_6 f_7,\\
 s_{300} &= \pm f_1^{190} f_2^{34} f_3^{12} f_4^4 f_5^2 f_6^2 f_7.
\end{align*}
 The polynomials $f_5,f_6,f_7,f_8$ have degree $6,8,8$ and $2$ respectively. Numerically, the quantity $\frac{1}{n} \mathrm{ord}_{f_1}(s_n)$ seems to converge to a limit (close to $0,63$) as $n$ grows.  Similarly,   $\lim_{n \to \infty} \frac{1}{n} \mathrm{ord}_{f_j}(s_n)$ appears to exist for higher $j$. This suggests the existence of a limit distribution of zeros associated to sections of maximal norm which is \textit{discrete}.
 
 However, replacing the disc of center $\frac{1}{4}$ and radius $\frac{1}{4}$ by a the disc of center $0$ and radius $1$, the corresponding lattices become asymptotically semistable, and one doesn't expect such a discreteness result, but rather a uniform distribution of the zeros of small sections along the boundary of the unit disk. 
 
 In \S \ref{s:notions} below we recall some work of Serre in \cite{Serre} which is useful for quantifying the intuition that the general case must interpolate between these two situations.
 \subsection{Measures}
 \label{s:notions}

  Let $Z$ be a compact
metrizable topological space.  Define $C(Z)$ to be the set of continuous real valued functions $f$ on $Z$.
A positive Radon measure on $Z$ is an $\mathbb{R}$-linear $\mathbb{R}$-valued function $\mu$ on $C(Z)$
such that $\mu(f) \ge 0$ if $f(x) \ge 0$ for all $x \in Z$.  We will sometimes write 
$\int_Z f(x) \mu(x)$ or $\int_Z f \mu$ for $\mu(f)$.  The weak topology on the space of
positive Radon measures is defined by saying $\lim_{n \to \infty} \mu_n = \mu$ if $\lim_{n \to \infty} \mu_n(f) = \mu(f)$
for all $f \in C(Z)$.  The mass of a measure $\mu$ is the value $\mu(1)$.  The space $M(Z)$ of positive
Radon measures of mass $1$ is compact for the weak topology (c.f. \cite[\S 1.1]{Serre}).  

Suppose now that $X$ is a smooth projective curve over a global field $K$.
 For each place $v$ of $K$ of $X$ we let $\mathbb{C}_v$ be the completion of an algebraic
 closure $\overline{K}_v$ of $K_v$.  
 If $v$ is archimedean, we let the topological space $Z = Z_v$ in the above discussion be $X(\mathbb{C}_v) = X(\mathbb{C})$ with the archimedean topology.
 If $v$ is non-archimedean, we let $Z = Z_v$ be the Berkovich space $X_{Berk,\mathbb{C}_v}$ described in
 \cite{Berk90}, which is compact and metrizable by \cite[\S 1]{CL1}.  There is a canonical inclusion
 of  sets $X(\mathbb{C}_v) \subset X_{Berk,\mathbb{C}_v}$.  For all $v$ we define $M_v = M(Z_v)$.

Let $v$ be an arbitrary place of $K$ and suppose $x \in X(\mathbb{C}_v)$.  If $v$ is archimedean, let $\delta_x \in M_v $
be the Dirac measure associated to $x$.  If $v$ is non-archimedean, we view $x$ as a point of $X_{Berk,\mathbb{C}_v}$
and we again let $\delta_x \in M_v $ be the associated Dirac measure.
 Suppose 
$D = \sum_{x \in X(\mathbb{C}_v)} m_x x$ is a non-zero effective divisor of $X(\mathbb{C}_v)$ that is stable under the action 
of  $\mathrm{Aut}(\mathbb{C}_v/K)$, so that $m_x = 0$ for almost all $x$.  We define the Dirac measure of $D$ to be
$$\mu(D) = \frac{1}{\mathrm{deg}(D)} \sum_{x \in X(\mathbb{C}_v)} m_x \delta_x.$$

Let $T$ be a non-empty collection of such $D$ which is closed under taking sums.  Note that $T$ is  countable.  We define $M_v(T)$ to be the closure of $\{\mu(D): D \in T\}$ in $M_v$ with respect to
the weak topology.  The argument of \cite[Prop. 1.2.2]{Serre} shows that $M_{v}(T)$ is convex and compact.  
Let $I_T$ be the set of irreducible $K$-divisors
which are components of some element of $T$.  
%Define $T^\sharp$ to be set of non-zero divisors in the additive semi-group generated by  
%$I_T$. Then $T \subset T^\sharp$ and $M_{v}(T) \subset M_{v}(T^\sharp)$.  
Suppose $S$ is a finite subset of $I_{T}$.  If $S \ne I_{T}$, define $M_v(T,S)$ be the closed convex envelope in $M_v$ of the measures $\{\mu(D): D \in I_{T} - S\}$.
If $S = I_{T}$  define $M_v(T,I_{T})$ to be the one element set consisting of the zero measure $\mu_0$ on $Z_v$.
Define
\begin{equation}
\label{eq:intersect}
M_{v}(T,\infty) = \cap_{S} \ M_{v}(T,S).
\end{equation}
where the intersection is over all finite subsets $S$ of $I_{T}$.  Then $M_{v}(T,\infty)$
is a compact, convex subset of $M_v$ if $I_T$ is infinite, and $M_v(T,\infty) = \{\mu_0\}$ if $I_T$ is finite.

The following Theorem can be proved the same way as \cite[Thm. 1.2.11]{Serre}.

\begin{theorem}
\label{thm:Serrethm}
Suppose $\mu \in M_v(T)$.  There is a unique set of non-negative real numbers $\{c_0\} \cup \{c_D: D \in I_{T}\} $ such that 
$c_0 + \sum_{D \in I_{T}} c_D = 1$ and 
\begin{equation}
\label{eq:hoopla}
\mu = \sum_{D \in I_{T}} c_D \ \mu(D) + \nu \quad \mathrm{with} \quad \nu \in c_0 \ M_{v}(T,\infty)
\end{equation}
where  $c_0 = 0$ if $I_{T}$ is finite 
\end{theorem}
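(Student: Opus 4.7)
I would follow Serre's strategy in \cite[Thm.~1.2.11]{Serre}, decomposing $\mu$ into an atomic contribution supported on the countable union of finite Galois orbits $\mathrm{supp}(\mu(D))$, $D \in I_T$, plus a diffuse residue lying in a rescaling of $M_v(T,\infty)$.

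For existence, I would write $\mu$ as a weak limit of convex combinations $\mu_n = \sum_{D \in F_n} \lambda_{n,D}\,\mu(D)$ with $F_n \subset I_T$ finite and $\sum_D \lambda_{n,D} = 1$, then apply Tychonoff's theorem to $(\lambda_{n,D})_{D \in I_T} \in [0,1]^{I_T}$ to extract a subsequence along which $\lambda_{n,D} \to c_D$ for every $D \in I_T$. Taking $n \to \infty$ inside any finite sub-sum gives $\sum_{D \in S} c_D \le 1$ for every finite $S \subset I_T$, hence $\sum_D c_D \le 1$; set $c_0 = 1 - \sum_D c_D$. Because each $\mu(D)$ has total mass one, the series $\mu_{\mathrm{at}} := \sum_D c_D\,\mu(D)$ converges in total variation, and I put $\nu := \mu - \mu_{\mathrm{at}}$.

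To place $\nu$ in $c_0\,M_v(T,\infty)$, for each finite $S \subset I_T$ I split
\[
\mu_n = \sum_{D \in S} \lambda_{n,D}\,\mu(D) + \Bigl(1 - \sum_{D \in S}\lambda_{n,D}\Bigr)\tau_{n,S}, \qquad \tau_{n,S} \in M_v(T,S),
\]
and pass to the weak limit to obtain $\mu = \sum_{D \in S} c_D\,\mu(D) + (1 - \sum_{D \in S} c_D)\,\tau_S$ with $\tau_S \in M_v(T,S)$, since $M_v(T,S)$ is closed. Subtracting $\mu_{\mathrm{at}}$ yields $\nu = (1 - \sum_{D \in S} c_D)\,\tau_S - \sum_{D \notin S} c_D\,\mu(D)$; as $S$ exhausts $I_T$ the second term vanishes in total variation, so when $c_0 > 0$ the $\tau_S$ converge (in fact in total variation) to $\nu/c_0$. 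Since $M_v(T,S') \supset M_v(T,S)$ whenever $S' \subset S$ and each $M_v(T,S)$ is weakly closed, the limit lies in $\bigcap_S M_v(T,S) = M_v(T,\infty)$. The case $c_0 = 0$ forces $\nu = 0$ directly.

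The main obstacle is uniqueness. Everything in play is $\mathrm{Aut}(\mathbb{C}_v/K)$-invariant, the support of each $\mu(D)$ is a single finite Galois orbit, and distinct irreducible $D$'s give disjoint orbits. Given two decompositions $\mu = \sum_D c_D\,\mu(D) + \nu = \sum_D c'_D\,\mu(D) + \nu'$ with $\nu \in c_0 M_v(T,\infty)$ and $\nu' \in c'_0 M_v(T,\infty)$, for each finite $S \subset I_T$ one has
\[
\sum_{D \in S}(c_D - c'_D)\,\mu(D) = (\nu' - \nu) + \sum_{D \notin S}(c'_D - c_D)\,\mu(D),
\]
and letting $S$ exhaust $I_T$ eliminates the tail on the right. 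Concluding that each coefficient difference vanishes is the heart of Serre's argument: each $\mu(D_0)$ is an extreme point of $M_v(T)$ that cannot be manufactured from measures in $M_v(T,\{D_0\})$, while $M_v(T,\infty) \subset M_v(T,\{D_0\})$ for every $D_0 \in I_T$, and these facts together with the Galois-invariance constraint force $c_{D_0} = c'_{D_0}$ for every $D_0$, whence also $c_0 = c'_0$ and $\nu = \nu'$.
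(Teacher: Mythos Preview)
The paper offers no argument of its own: it says the theorem ``can be proved the same way as \cite[Thm.~1.2.11]{Serre}''. Your proposal is precisely an attempt to run Serre's proof in this setting, so the approaches match. Your existence argument is correct: writing $\mu = \lim \mu(D_n)$, extracting coefficients $c_D$ by compactness of $[0,1]^{I_T}$, and trapping the residue in $c_0\,M_v(T,\infty)$ via the nested closed convex sets $M_v(T,S)$ is Serre's strategy, and the details you give go through.

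The gap is in uniqueness. Your key assertion---that $\mu(D_0)$ ``cannot be manufactured from measures in $M_v(T,\{D_0\})$''---is not true in general in this setting, because distinct closed points of a curve can accumulate in the topology of $Z_v$. Concretely, on $\mathbb{P}^1_{\mathbb{Q}}$ at the archimedean place, if $I_T$ contains the rational points $0$ and $1/n$ for all $n\ge 1$, then $\delta_0 = \lim_n \delta_{1/n}$ lies in every $M_v(T,S)$ and hence in $M_v(T,\infty)$; so $\mu = \delta_0$ admits both the decomposition with $c_{D_0}=1$, $\nu=0$ (where $D_0$ is the point $0$) and the decomposition with every $c_D=0$, $c_0=1$, $\nu=\delta_0$. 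Thus the extremality/separation principle you invoke cannot by itself force uniqueness here. You need to return to Serre's actual mechanism in \cite{Serre} for singling out the $c_D$---a canonical formula rather than an abstract extremality claim---and check carefully what hypothesis on the family $\{\mu(D)\}_{D\in I_T}$ (or what diffuseness condition on $\nu$) makes it transfer to the present context.
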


In \cite{Serre}, the sum $\sum_{D \in I_{T}} c_D \ \mu(D)$ is called the atomic part $\mu_{at}$
of $\mu$, and $\nu$ is called the diffuse part of $\mu$.

We can apply these notions to the zeros of sections of a metrized line bundle $L$ on $X$ in the following way.  

\begin{definition}  
Suppose   $\lambda \in \mathbb{R} \cup \{-\infty\}$.  Let $T(L,\lambda)$ be the set of  divisors
$\mathrm{zer}(f)$ of zeros associated to non-constant elements $f$ of $ \cup_{n \ge 1} H^0(X,L^{\otimes n})^{\geq \lambda} $.  
\end{definition} 

Fix a place $v$ of $K$.  It is a natural question whether all the elements of $T(L,\lambda)$ must contain particular
irreducible divisors with at least a certain multiplicity.  We can approach this question by considering the set $M_v(T(L,\lambda))$
of measures which are limits of Dirac measures associated to $\mathrm{zer}(f)$ as above.  

Serre's Theorem \ref{thm:Serrethm}
shows that such limits will have atomic parts and diffuse parts.  The example discussed in \S \ref{s:AnExample} suggests
the following question.

\begin{question}
\label{q:askit} Fix a place $v$ of $K$.  Suppose that for each $n \ge 1$, $H^0(X,L^{\otimes n})^{\geq \lambda} $ has non-constant elements, and
$f_n \in H^0(X,L^{\otimes n})^{\geq \lambda} $
has maximal slope among all such elements.  Since $M_v(T(L,\lambda))$
is compact, there is an infinite subsequence of the measures $\{\mu(\mathrm{zer}(f_n))\}_{n \ge 1}$ which has a limit $\mu \in M_v(T(L,\lambda))$.
For all such limits $\mu$, does the atomic part $\mu_{at}$ of $\mu$ depend only on $L$?  Which measures arise
as the diffuse parts of such $\mu$?
\end{question}

In Theorems \ref{thm:succmin}  and \ref{thm:succminberk} we will show that some particular diffuse measures
arise as limit measures  $\mu$ of the sort in this question.

 \section{Adelic sets of capacity one}
\label{s:Capacityone}

\subsection{Statement of results}

In this section we assume $X$ is a smooth projective geometrically irreducible curve over a number field $K$.  Let $\overline{K}$ be an algebraic closure of $K$, and let $\mathcal{X}$
be a finite $\mathrm{Gal}(\overline{K}/K)$-stable subset of $X(\overline{K})$.  By an adelic
subset of $X$ we will mean a product $\mathcal{E} = \prod_v E_v$ over all the places $v$ of $K$
of subsets $E_v$ of $X(\overline{K}_v) - \mathcal{X}$ when
$\overline{K}_v$ is an algebraic closure of $K_v$.  As noted in at the beginning of \cite[\S 4.1]{Rumely1}, subsets of $X(\overline{K}_v)$ are better suited for global capacity theory than those of $X(\mathbb{C}_v)$.

We will assume that the $E_v$ satisfy
the standard hypotheses described in \cite[Def. 5.1.3]{Rumely1} relative to $\mathcal{X}$.  In particular, each 
$E_v$ is algebraically capacitifiable with respect to $\mathcal{X}$. We will assume each $E_v$ has positive inner capacity $\underline{\gamma}_\zeta(E_v)$ with respect to every point $\zeta \in X(\overline{K}_v) - E_v$ in the sense of \cite[p. 134-135, 196]{Rumely1}. 

In \cite[Def. 5.1.5]{Rumely1}, Rumely defined a capacity $\gamma(\mathcal{E},\mathcal{X})$ 
of such an $\mathcal{E}$ relative to $\mathcal{X}$.  For each ample effective divisor $D = \sum_{\zeta \in \mathcal{X}} a_\zeta \ \zeta$ supported on $\mathcal{X}$  one  has the sectional capacity $S_\gamma(\mathcal{E},D)$
of $\mathcal{E}$ relative to $D$ (\cite{Chinburg1}, \cite{RLV}).  We will show in 
Lemma \ref{lem:extendit} below that  Rumely's results in  \cite{RumelyDuke} imply that  $\gamma(\mathcal{E},\mathcal{X})$ is the infimum of $S_\gamma(\mathcal{E},D)^{1/\mathrm{deg}(D)^2}$ as $D$ ranges over all ample effective divisors  supported on $\mathcal{X}$ provided $\gamma(\mathcal{E},\mathcal{X}) \ge 1$.

We will recall in the next section Rumely's definition in \cite{Rumely1} of the Green's function $G(z,\zeta;E_v) \in \mathbb{R} \cup \{\infty\}$  of pairs  
$z, \zeta \in X(\overline{K}_v)$.  Define $G(z,D;E_v) = \sum_{\zeta \in \mathcal{X}} a_\zeta \ G(z,\zeta;E_v)$.   We will
regard meromorphic sections of powers of $L = \mathcal{O}_X(D)$ 
as elements of the function field $K(X)$.  Then $1$ is an element of $H^0(X,L)$
with divisor $D$.  Define a
 $v$-adic metric on $L$ via 
\begin{equation}
\label{eq:metric} 
|1|_v(z) = \mathrm{exp}(-G(z,D;E_v)) \quad \mathrm{for}\quad z \in X(\overline{K}_v)
\end{equation}
We will call these the Green's metrics on $L$ associated to $\mathcal{E}$.

%We will recall in the next section how Rumely's work in \cite[Thm 3.12, Thm. 4.1.22]{Rumely1} associates to $D$ and $E_v$ an element  
%$\mu_{v}(D,E_v)$ of  $\mathcal{M}_v^s$ which we will call the equilibrium measure of $E_v$ with respect to $D$.   

We will show the following result.

\begin{theorem}
\label{thm:caponethm} Suppose that $D$ is an ample effective divisor with support $\mathcal{X}$
such that
\begin{equation}
\label{eq:capone}
\gamma(\mathcal{E},\mathcal{X}) = S_\gamma(\mathcal{E},D)^{1/\mathrm{deg}(D)^2} = 1
\end{equation}
Give $L = \mathcal{O}_X(D)$ the Green's metrics associated to $\mathcal{E}$, and suppose $E_v$ is compact if $v$ is archimedean.  
Let $\{\lambda_{n,i}\}_{i = 1}^{r_n}$ be the set of successive maxima of  $H^0(X,L^{\otimes n})$.
Let $\nu$ be the limiting distribution associated to the sets $\{\lambda_{n,i}/n\}_{i=1}^{r_n}$ as $n\to\infty$.  
Then $\nu$ is the Dirac measure supported on $0$.
\end{theorem}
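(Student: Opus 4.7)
The plan is to apply the Chebyshev transform machinery developed in \S\ref{s:Chebyshev}. Existence of $\nu$ as a compactly supported probability measure on $\R$ follows from Chen's Theorem \ref{thm:ChenTheorem}, with the mild logarithmic singularities of the Green's metrics along $\mathcal{X}$ absorbed by Remark \ref{rem:QuentinRemark}. The bulk of the proof will consist in showing that, for a suitable choice of base point $x$ and local parameter $t$, the global Chebyshev transform $c_L^{x,t}:[0,\deg(L)]\to\R$ is identically zero. Once this is established, Corollary \ref{s:diracmeasureChebychev} immediately implies that $\nu$ is a Dirac measure, while the Yuan integral formula stated just above that corollary places its atom at the mean of $c_L^{x,t}$ over the Okounkov body, which is also zero; hence $\nu=\delta_0$.

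To prove $c_L^{x,t}\equiv 0$, I pick a regular $K$-rational point $x\in X(K)\setminus\mathcal{X}$ lying in $E_v$ for every place $v$ (available by Rumely's Fekete-type theorem under our hypothesis), with a local parameter $t$ at $x$ and a trivialization $\sigma$ of $L$ near $x$. At each place $v$ the Green's metric satisfies
\[
\|s\|_{L^{\otimes n},v,\sup}\;=\;\sup_{E_v}|f_s|_v,
\]
where $s=f_s\cdot 1_D^{\otimes n}$ realizes $s$ as a rational function: this follows from the subharmonicity of $\log|f_s|_v-nG(\cdot,D;E_v)$ on $X(\mathbb{C}_v)\setminus E_v$ combined with the quasi-everywhere vanishing of $G$ on $E_v$. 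Consequently, the extremal problem defining $F_{L^{\otimes n},v}^{x,\sigma^{\otimes n},t}(\alpha n)$ reduces to a classical local Chebyshev-type problem on $E_v$: maximizing the leading Taylor coefficient at $x$ of a rational function $f$ with $\mathrm{div}(f)+nD\geq 0$ and $\mathrm{ord}_x(f)=\alpha n$, subject to $\sup_{E_v}|f|_v\leq 1$. The limiting value $c_{L,v}^{x,\sigma,t}(\alpha)$ is thus minus the logarithm of an $\alpha$-twisted local capacity of $(E_v,D,x)$ in Rumely's sense, and summing over $v$ with the weights $\delta_v$ and invoking the global product formula of \cite{Rumely1} for the adelic set $\mathcal{E}$ with enlarged pole set $\mathcal{X}\cup\{x\}$ identifies $c_L^{x,t}(\alpha)$ with minus the logarithm of a global capacity of $\mathcal{E}$, which vanishes for each $\alpha\in(0,\deg(L))$ by hypothesis.

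The hard part will be the uniform invocation of Rumely's product formula across $\alpha$: the case $\alpha=0$ is essentially the definition of $\gamma(\mathcal{E},\mathcal{X})$, rational $\alpha$ reduces to Rumely's theorems in \cite{Rumely1,Rumely2} after enlarging $\mathcal{X}$ appropriately, and irrational $\alpha$ will be recovered by continuity from the concavity of $c_L^{x,t}$ on the Okounkov body established in \S\ref{s:Chebyshev}. A secondary issue will be verifying the Fekete-type existence of a $K$-rational point $x\in\bigcap_v E_v$, which may fail in borderline cases; an alternative is to work with an algebraic point and average over its Galois orbit, at the cost of complicating the local-trivialization setup.
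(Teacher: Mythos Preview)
Your approach differs substantially from the paper's, and the central step has a genuine gap.

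The paper does \emph{not} use the Chebyshev machinery of \S\ref{s:Chebyshev} here. Instead it argues directly in two steps. First (Lemma \ref{lem:no small}), if some nonzero $s\in H^0(X,L^{\otimes n})$ had $\lambda(s)>0$, then writing $r_v=\sup_{E_v}|f_s|_v$ and using Lemma \ref{lem:greensup}, the adelic polydisc of radii $r_v$ would have sectional capacity $<1$; Rumely's pullback formula \cite[Prop.~4.1]{RumelyDuke} then forces $S_\gamma(\mathcal{E},D')<1$ for the polar divisor $D'$ of $f_s$, contradicting \eqref{eq:capone} via Lemma \ref{lem:extendit}. Second (Lemmas \ref{lem:shrink}--\ref{lem:reduce}), one shrinks a single $E_{v_0}$ so that the new adelic set $\mathcal{E}'$ has capacity strictly less than $1$, while the Green's functions move by at most $\epsilon$. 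Rumely's Fekete--Szeg\H{o} construction then produces an $f\in K(X)$ with poles on a multiple of $D$ and $\sup(f,E'_v)\le 1$ everywhere; multiplying a fixed finite set of generators by powers of $f$ yields, for every $n$, a basis of $H^0(X,L^{\otimes n})$ whose $\mathcal{E}'$-heights are bounded below by a constant $c$ independent of $n$, hence whose $\mathcal{E}$-heights satisfy $\lambda(s)/n\ge c/n-\epsilon$. Combining the two steps, $\nu$ has no mass off $\{0\}$.

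Your proposal instead asserts that the global Chebyshev transform $c_L^{x,t}$ is identically zero ``by Rumely's product formula for the enlarged pole set $\mathcal{X}\cup\{x\}$.'' This is the step that does not go through as written. The local constants $c_{L,v}^{x,\sigma,t}(\alpha)$ are suprema over $K_v$-sections; the extremizers at different places are unrelated, so the product formula, which applies to a single element of $K^\times$, says nothing about $\sum_v \delta_v\, c_{L,v}^{x,\sigma,t}(\alpha)$. Moreover, you cannot literally enlarge the pole set to $\mathcal{X}\cup\{x\}$ in Rumely's framework once you have required $x\in E_v$ for all $v$, since then $E_v\not\subset X(\overline{K}_v)\setminus(\mathcal{X}\cup\{x\})$. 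Even the case $\alpha=0$ is not ``essentially the definition of $\gamma(\mathcal{E},\mathcal{X})$'': with $x\in E_v$ and trivialization $\sigma=1$ one has $F_{L^{\otimes n},v}(0)=1$ by evaluating a constant section, so $c_L^{x,t}(0)=0$ holds trivially and carries no capacity information.

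A salvageable variant would be to prove separately that (a) $c_L^{x,t}\le 0$ on the whole Okounkov body and (b) $\int c_L^{x,t}=0$. Point (b) does follow from Yuan's integral identity together with $-\log S_\gamma(\mathcal{E},D)=0$. But (a) is at least as hard as Lemma \ref{lem:no small}, and strictly harder in form: it asks for an inequality on a sum of independent local suprema, whereas the paper only needs the inequality for a single global section. Finally, the existence of a $K$-rational $x\in\bigcap_v E_v$ is not supplied by Fekete--Szeg\H{o} (which gives algebraic points in neighborhoods when capacity exceeds $1$, and says nothing at capacity exactly $1$); your suggested Galois-averaging workaround is plausible but adds another layer you have not carried out.
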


Thus lattices associated
to metrized line bundles associated to adelic sets of capacity one 
are asymptotically semi-stable, in the sense that all of their successive maxima are approximately equal.
%More heuristically,  this Theorem implies that the lattices associated
%to metrized line bundles associated to adelic sets of capacity one 
%are asymptotically semi-stable, in the sense that all of their successive maxima are approximately equal.
%Note that we are assuming in 
%(\ref{eq:capone}) that that infimum of $S_\gamma(\mathcal{E},D')^{1/\mathrm{deg}(D')^2}$ as $D'$ ranges over all ample effective divisors  supported on $\mathcal{X}$
%is achieved by the divisor $D$.  The infimum is not in general achieved in this way unless  one allows
%$D$ to have non-negative real coefficients.  One could generalize our prior work 
%on small sections and Theorem \ref{thm:caponethm} by considering $D$ which have real coefficients, but for the sake of simplicity we will not do this.  

%In \S \ref{s:notions}, we defined $\mathcal{M}_v$ to be the set of bounded Borel measures on 
%$X(\mathbb{C}_v) = X(\mathbb{C})$ when $v$ is archimedean.  When $v$ is non-archimedean, we defined in 
%$\mathcal{M}_v$ to be the set of bounded Borel measures on the Berkovich space $X_{Berk,\mathbb{C}_v}$.

\begin{cor}
\label{ex:nicex}
Suppose there is a non-constant morphism $h:X \to \mathbb{P}^1$ over $K$ all of whose poles are at one point $\zeta \in X(K)$.  Write $\mathbb{P}^1 = \mathbb{A}^1 \cup \{\infty\}$
and $N = \mathrm{deg}(h)$, and let $D = N\zeta = h^*(\infty)$.  Suppose
$E_v = \{z \in X(\overline{K}_v): 
|h(z)|_v \le 1\}$ for all $v$.      Then the hypotheses of Theorem \ref{thm:caponethm} hold,
so that $\nu$ is the Dirac measure supported at $0$.
\end{cor}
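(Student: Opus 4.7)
The plan is to verify the hypotheses of Theorem~\ref{thm:caponethm} one by one, exploiting the functoriality of capacity theory under the finite map $h$. The first two are essentially immediate: on a smooth projective curve any divisor of positive degree is ample, so $D=N\zeta$ (with $\mathrm{deg}(D)=N>0$) is ample with support $\mathcal{X}=\{\zeta\}$; and at an archimedean place $v$ the Riemann surface $X(\mathbb{C}_v)$ is compact, hence $E_v=h^{-1}(\{|t|_v\le 1\})$, being closed, is compact. The standard hypotheses of \cite{Rumely1} at the remaining places (algebraic capacitifiability, positive inner capacity at~$\zeta$) are inherited from the well-known analogous statements for the unit disc in $\mathbb{P}^1$ by pullback along~$h$.

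The core step is a simultaneous identification of the Green's metric and computation of the sectional capacity. Classical potential theory at archimedean places, and its Berkovich analogue (\cite{Berk90}, \cite{CL1}) at non-archimedean ones, give that the Green's function of $\{|t|_v\le 1\}\subset\mathbb{P}^1$ relative to $\infty$ is $\log^+|t|_v$ at every place. Since $h^{-1}(\infty)=\{\zeta\}$ set-theoretically and $\mathrm{deg}(h)=N$, the map $h$ is totally ramified at $\zeta$ with index~$N$; the standard pullback formula for Green's functions under finite maps then yields $G_X(z,N\zeta;E_v)=\log^+|h(z)|_v$. Under the natural identification $L=\mathcal{O}_X(D)\simeq h^{*}\mathcal{O}_{\mathbb{P}^1}(\infty)$, the Green's metric on $L$ is therefore $|1|_v(z)=\min(1,|h(z)|_v^{-1})$, i.e.\ the $h$-pullback of the Green's metric on $\mathbb{P}^1$.

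The identity $S_\gamma(\mathcal{E},D)=1$ is then proved by bounding both sides. For the upper bound one uses the sections $h^{m}\in H^0(X,mD)$: the formula for $|1|_v$ above gives $|h^m|_v(z)=\min(|h(z)|_v,1)^m\le 1$ at every place, so $\|h^m\|_v=1$ for all $v$, forcing $S_\gamma(\mathcal{E},D)\le 1$. For the matching lower bound, one transports to $X$ the classical identity $S_\gamma(\mathcal{E}',\infty)=1$ on $\mathbb{P}^1$ (a Fekete--Szeg\H{o}-type statement for the adelic unit disc) via Rumely's functoriality of sectional capacity under finite morphisms: the coincidence of Green's metrics above ensures that the arithmetic volumes of $(X,mD)$ and $(\mathbb{P}^1,m\infty)$ agree to leading order in~$m$. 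Lemma~\ref{lem:extendit} then gives $\gamma(\mathcal{E},\mathcal{X})=1$, and Theorem~\ref{thm:caponethm} yields the stated conclusion.

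The main obstacle is precisely this lower bound $S_\gamma(\mathcal{E},D)\ge 1$: unlike the upper bound, no single explicit section witnesses it, so one must either invoke a functoriality result for sectional capacity under finite morphisms of curves, or prove it by a direct adelic Chebyshev-type argument valid for arbitrary $K$-rational sections of $mD$ on~$X$.
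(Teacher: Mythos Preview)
Your overall strategy---reduce to the adelic unit disc in $\mathbb{P}^1$ via the finite map $h$---is exactly the paper's, and the verification of the auxiliary hypotheses (ampleness, compactness at archimedean places, standard hypotheses) is fine. The paper's proof is a one-line citation: equality \eqref{eq:capone} follows from Rumely's pullback formula \cite[Thm.~5.1.14]{Rumely1} for the capacity $\gamma(\mathcal{E},\mathcal{X})$ together with the explicit computation $\gamma(\mathcal{B},\{\infty\})=1$ for the adelic unit disc \cite[\S 5.2]{Rumely1}. Since $\mathcal{X}=\{\zeta\}$ is a single point, $\gamma(\mathcal{E},\{\zeta\})$ and $S_\gamma(\mathcal{E},D)^{1/\deg(D)^2}$ coincide automatically, so there is no need to treat the two halves of \eqref{eq:capone} separately.

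The one genuine weak point in your write-up is the ``upper bound'' step. Exhibiting the single section $h^m\in H^0(X,mD)$ with $\|h^m\|_v=1$ at every place shows only that $\lambda_{1,m}\ge 0$; it does not by itself force $S_\gamma(\mathcal{E},D)\le 1$, since the sectional capacity is a statement about the full adelic volume of $\mathcal{F}_n$ in a space of dimension $\sim nN$, and a one-dimensional family of small sections says nothing about that volume. The split into upper and lower bounds is therefore artificial: the pullback formula (either \cite[Thm.~5.1.14]{Rumely1} for $\gamma$, or \cite[Prop.~4.1]{RumelyDuke} for $S_\gamma$, which you correctly cite for the ``hard'' direction) gives the \emph{equality} $\gamma(\mathcal{E},\{\zeta\})=\gamma(\mathcal{B},\{\infty\})=1$ in one stroke. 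Drop the $h^m$ paragraph and invoke the pullback formula directly, and your argument is complete and matches the paper's.
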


\begin{proof}The equality (\ref{eq:capone}) in this case is a consequence of
Rumely's pullback formula \cite[Thm. 5.1.14]{Rumely1} together with the computation
of capacities of adelic disks in $\mathbb{P}^1$ given in \cite[\S 5.2]{Rumely1}. 
\end{proof}

We will discuss zeros of successive maxima in the case described in Corollary \ref{ex:nicex}.  
Identify the morphism $h:X \to \mathbb{P}^1$ of this Corollary 
with an element of the function field $K(X)$.  Let $z$ be the affine coordinate for $\mathbb{P}^1$ which has image $h$ under the induced map $K(\mathbb{P}^1) = K(z) \to K(X)$
of function fields.   

\begin{theorem}
\label{thm:succmin} Let $v$ be an archimedean place of $K$.   Let $\mu_0$ be the uniform measure on 
 the boundary of the unit disk $B_v = \{z \in \mathbb{P}^1(\overline{K}_v) = \mathbb{P}^1(\mathbb{C}): |z|_v = 1\}$.  Then  $\frac{1}{N} h^{-1}(\mu_0)$
 is the equilibrium measure $\mu(E_v,D)$ of $E_v = \overline{E}_v = h^{-1}(B_v)$ 
 in the sense of \cite[p. 214-215]{Rumely1} with respect to the polar divisor $D = N\zeta$ of $h$.  The measure $\mu(E_v,D)$  is an element of $\cap_{\lambda < 0} M_v(T(L,\lambda),\infty)$
where $M_v(T(L,\lambda),\infty)$ is the set of diffuse probability measures associated by (\ref{eq:intersect}) and Theorem \ref{thm:Serrethm} to the set $T(L,\lambda)$ of divisors of zeros of non-constant sections of 
 $\cup_{n \ge 1} H^0(X,L^{\otimes n})^{\geq \lambda}$.
\end{theorem}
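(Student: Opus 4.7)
The plan is to exhibit $\mu(E_v, D)$ as a weak limit of normalized Dirac measures $\mu(P_d)$ attached to a family of irreducible $K$-divisors $P_d$ on $X$, each realized as a component of the zero divisor of a section of $L^{\otimes n}$ of slope $\ge \lambda$. First I would identify $\mu(E_v, D) = \frac{1}{N} h^{-1}(\mu_0)$ via Rumely's functoriality for Green's functions: since $D = h^*(\infty)$ and $E_v = h^{-1}(B_v)$, pullback gives $G_X(z, D; E_v) = G_{\mathbb{P}^1}(h(z), \infty; B_v) = \log^+|h(z)|_v$, and taking distributional Laplacians, together with $h^*\delta_\infty = N\delta_\zeta$, forces $N \mu(E_v, D) = h^{-1}(\mu_0)$, where $\mu_0$ is the standard equilibrium measure of the unit disk.

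For each integer $d \ge 1$, I take $P_d$ to be the closed $K$-subscheme of $X$ supported on $h^{-1}(\mu_d^{\mathrm{prim}})$, where $\mu_d^{\mathrm{prim}}$ denotes the Galois orbit of primitive $d$-th roots of unity; generically this is an irreducible $K$-divisor of degree $N\varphi(d)$, and if it splits further over $K$ one passes to an irreducible component and runs the argument there. Erd\H{o}s--Tur\'an equidistribution of primitive $d$-th roots of unity, pulled back along $h$, then yields $\mu(P_d) \to \mu(E_v, D)$ weakly on $X(\mathbb{C}_v)$. Since $\deg(P_d) \to \infty$, these divisors eventually avoid any prescribed finite subset $S \subseteq I_{T(L,\lambda)}$.

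To place each $P_d$ in $I_{T(L,\lambda)}$, I would use the section $f_n = h^n - 1 \in H^0(X, L^{\otimes n})$ for $n$ a multiple of $d$; the factorization $h^n - 1 = \prod_{e \mid n} \Phi_e(h)$ in $K(X)$ realizes $P_d$ as a component of $\mathrm{zer}(f_n)$. A direct computation with the Green's metric gives $|f_n|_{L^{\otimes n}, v'}(z) = |h(z)^n - 1|_{v'} \max(1, |h(z)|_{v'})^{-n} \le 2$ at archimedean places $v'$ and $\le 1$ at non-archimedean ones, hence $\lambda(f_n) \ge -[K:\mathbb{Q}]\log 2$ uniformly in $n$. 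As soon as $\lambda \le -[K:\mathbb{Q}]\log 2$, the divisors $P_d$ lie in $I_{T(L,\lambda)} - S$ for all large $d$, and Serre's decomposition (Theorem~\ref{thm:Serrethm}) applied to the weak limit $\mu(P_d) \to \mu(E_v, D)$ places $\mu(E_v, D)$ in the diffuse part $M_v(T(L,\lambda), \infty)$.

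The main obstacle is extending this to $\lambda$ close to $0$, where the bound $\lambda(f_n) = -[K:\mathbb{Q}]\log 2$ is sharp and the sections $f_n$ no longer qualify. Here I would invoke the asymptotic semistability of $L$ from Theorem~\ref{thm:caponethm}: because the normalized successive maxima of $H^0(X, L^{\otimes m})$ concentrate at $0$, and because each $P_d$ has canonical height zero (its points being preimages under $h$ of roots of unity), a Bombieri--Vaaler-type restriction inequality should produce, for every $\lambda < 0$ and every $d$, a non-zero section of $H^0(X, L^{\otimes m}(-P_d))$ of slope $\ge \lambda$ for $m$ sufficiently large. Making this existence uniform in $d$ is the technical heart of the argument; once established, $P_d \in I_{T(L,\lambda)}$ for every $\lambda < 0$, and intersecting the resulting inclusions over finite $S$ and over $\lambda < 0$ yields $\mu(E_v, D) \in \cap_{\lambda < 0} M_v(T(L,\lambda), \infty)$.
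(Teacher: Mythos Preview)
Your overall plan coincides with the paper's: both exhibit $\mu(E_v,D)$ as a weak limit of Dirac measures attached to pull-backs of cyclotomic divisors, and both verify that these arise from sections of controlled height. Your identification $\mu(E_v,D)=\frac{1}{N}h^{-1}(\mu_0)$ via Green's-function pullback is exactly the paper's first step. The paper is slightly more direct in taking $\Phi_{2^m}(h)$ itself as the section (so that its zero divisor is already the cyclotomic divisor you want), rather than taking $h^n-1$ and extracting $P_d$ as a component; but this is cosmetic.

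The substantive divergence is the ``main obstacle'' you raise for $\lambda$ close to $0$. You are reading the threshold in $H^0(X,L^{\otimes n})^{\ge\lambda}$ as a condition on the \emph{unnormalized} height, and then your bound $\lambda(h^n-1)\ge -[K:\mathbb{Q}]\log 2$ is indeed stuck away from $0$. The paper instead works with the \emph{normalized} height and simply observes that $\lambda(\Phi_{2^m}(h))/2^{m-1}\to 0$; hence for every fixed $\lambda<0$ these sections eventually qualify, and the obstacle vanishes. (Note that the framework of \S\ref{s:notions} requires $T(L,\lambda)$ to be closed under sums, and this only holds under the normalized reading: if $\lambda(f)/n_f\ge\lambda$ and $\lambda(g)/n_g\ge\lambda$ then $\lambda(fg)/(n_f+n_g)\ge\lambda$ by submultiplicativity, whereas the unnormalized version fails for $\lambda<0$.) With this correction, the appeal to Theorem~\ref{thm:caponethm} and the speculative Bombieri--Vaaler-type restriction argument are unnecessary, and the proof is complete by the same closing step you already wrote: since the zero sets of $\Phi_{2^m}$ are pairwise disjoint, the approximating divisors eventually avoid any finite $S$, so the limit lies in each closed $M_v(T(L,\lambda),S)$ and hence in their intersection.
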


We now state a version of this result for a non-archimedean place $v$ of $K$.  Define $\overline{E}_v$ to be the closure of $E_v = \{z \in X(\overline{K}_v): 
|h(z)|_v \le 1\}$ in $X_{Berk,\mathbb{C}_v}$.  Let ${\bf h}:\mathcal{X} \to \mathbb{P}^1_{\mathcal{O}_K}$ be the minimal regular model of  the morphism $h:X \to \mathbb{P}^1_K$ (see \cite{ChinburgRumely}).  Let $\overline{\infty}$ be the section of $\mathbb{P}^1_{\mathcal{O}_K} \to \mathrm{Spec}(\mathcal{O}_K)$ defined by the point at infinity. Then ${\bf h}^*(\overline{\infty}) = N\overline{\zeta} + J$
 for some vertical divisor $J$ when $\overline{\zeta}$ is the closure in $\mathcal{X}$ of the point $ \zeta \in X(K)$.   Let $\{Y_i\}_{i = 1}^\ell$ be the set of reduced irreducible components of the special fiber $\mathcal{X}_v$
 of $\mathcal{X}$, and let $m_i$ be the multiplicity of $Y_i$ in $\mathcal{X}_v$.  There is a unique point $\xi_i \in X_{Berk,\mathbb{C}_v}$ whose reduction is the generic point of $Y_i$.  Let $\delta_i$ be the delta measure supported on $\xi_i$ on $X_{Berk,\mathbb{C}_v}$, and let $({\bf h}^*(\overline{\infty}),Y_i)$ be the 
 intersection number of ${\bf h}^*(\overline{\infty})$ and $Y_i$.  Writing $D = N\zeta$, we have a measure 
\begin{equation}
 \label{eq:Berkmeasure}
 \mu(\overline{E}_v,D) = \frac{1}{N}\sum_{i = 1}^\ell  m_i ({\bf h}^*(\overline{\infty}),Y_i) \delta_i
 \end{equation}
 on $X_{Berk,\mathbb{C}_v}$.

\begin{theorem}
\label{thm:succminberk} 
Suppose $v$ is a non-archimedean place of $K$.  The measure $\mu(\overline{E}_v,D)$ in (\ref{eq:Berkmeasure}) is a probability measure 
lying in $\cap_{\lambda < 0} M_v(T(L,\lambda),\infty)$
where $M_v(T(L,\lambda),\infty)$ is the set of diffuse probability measures associated by (\ref{eq:intersect}) and Theorem \ref{thm:Serrethm} to the set $T(L,\lambda)$ of divisors of zeros of non-constant sections of 
 $\cup_{n \ge 1} H^0(X,L^{\otimes n})^{\geq \lambda}$.
\end{theorem}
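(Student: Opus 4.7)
The plan is to construct, for each $\lambda<0$ and each finite set $S$ of irreducible divisors, sections $f_n\in H^0(X,L^{\otimes n})$ with $\lambda(f_n)\geq\lambda$, zero divisor $\mathrm{zer}(f_n)$ avoiding $S$, and Berkovich zero measures converging weakly to $\mu(\overline{E}_v,D)$. The total mass of $\mu(\overline{E}_v,D)$ is immediate: $\frac{1}{N}\sum_i m_i(\mathbf{h}^*\overline{\infty},Y_i)=\frac{1}{N}(\mathbf{h}^*\overline{\infty}\cdot\mathcal{X}_v)=\frac{1}{N}\deg_X(\mathbf{h}^*\overline{\infty})=1$, by the projection formula applied to the horizontal divisor $\mathbf{h}^*\overline{\infty}$ and the special fibre $\mathcal{X}_v=\sum_i m_iY_i$; hence $\mu(\overline{E}_v,D)$ is a probability measure.

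For each $n$, I would take $f_n=P_n(h)\in H^0(X,L^{\otimes n})$ for a monic polynomial $P_n\in\mathcal{O}_K[t]$ of degree $n$. Corollary~\ref{ex:nicex} identifies the Green metric on $L$ at each place $w$ with the $h$-pullback of the capacity metric on $\mathcal{O}(1)$ associated to the unit disk in $\mathbb{C}_w$; a direct computation of the sup norm over $X(\overline{K}_w)$ shows that $||f_n||_{L^{\otimes n},w}=||P_n||_{\mathcal{O}(n),w}$, whence $\lambda(f_n)=\lambda(P_n)$, where the right-hand side is computed on $(\mathbb{P}^1,\mathcal{O}(1))$ with its standard adelic capacity metric. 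Applying Theorem~\ref{thm:caponethm} to $(\mathbb{P}^1,\mathcal{O}(1))$ (whose hypotheses hold because the adelic unit disk has capacity one) shows that the limiting distribution of the successive maxima of $H^0(\mathbb{P}^1,\mathcal{O}(n))$ is $\delta_0$; in particular, one can choose $P_n$ with $\lambda(P_n)\to 0$, so that $\lambda(f_n)\geq\lambda$ for any fixed $\lambda<0$ and $n$ large. A Baker-Rumely type equidistribution theorem on $\mathbb{P}^1_{Berk,\mathbb{C}_v}$, applied to such a sequence of asymptotically minimal height, then ensures that the Berkovich Dirac measure of the zeros of $P_n$ converges weakly to the equilibrium measure at $v$ of the adelic unit disk, namely the Dirac mass $\delta_{\zeta_\mathrm{Gauss}}$ at the Gauss point.

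Pulling back via the proper morphism of Berkovich curves $\mathbf{h}:X_{Berk,\mathbb{C}_v}\to\mathbb{P}^1_{Berk,\mathbb{C}_v}$, functoriality of Radon-measure pullback together with the identification of the local degree of $\mathbf{h}$ at each type II point $\xi_i$ with $m_i\deg(\mathbf{h}|_{Y_i})=m_i(\mathbf{h}^*\overline{\infty},Y_i)$ (via the arithmetic projection formula on $\mathcal{X}$) yields
$$
\mu(\mathrm{zer}(f_n))\xrightarrow{w}\frac{1}{N}\mathbf{h}^*\delta_{\zeta_\mathrm{Gauss}}=\sum_i\frac{m_i(\mathbf{h}^*\overline{\infty},Y_i)}{N}\delta_{\xi_i}=\mu(\overline{E}_v,D).
$$
For the avoidance clause, only finitely many $\alpha\in\overline{K}$ have $h^*\alpha$ involving an irreducible component in the finite set $S$; a small perturbation of $P_n$ keeping its roots away from this finite exceptional locus preserves both the asymptotic equidistribution and the slope bound, and ensures that $\mathrm{zer}(f_n)\in T(L,\lambda)\setminus S$. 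This yields $\mu(\overline{E}_v,D)\in M_v(T(L,\lambda),S)$ for every finite $S$, hence in the intersection $M_v(T(L,\lambda),\infty)$, and this holds for every $\lambda<0$.

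The principal obstacle is the Berkovich pullback formula in the displayed convergence: verifying that the local degree of $\mathbf{h}$ at $\xi_i$ coincides with $m_i(\mathbf{h}^*\overline{\infty},Y_i)$ requires both a local analysis of $\mathbf{h}_v:\mathcal{X}_v\to\mathbb{P}^1_{k_v}$ near the generic point of each $Y_i$ and a careful application of the arithmetic projection formula to match this with the intersection number. Once this identification is in place, the weak convergence of zero measures will follow from the standard functoriality of Berkovich pullback under proper morphisms of analytic curves.
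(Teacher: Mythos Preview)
Your overall strategy is right in spirit, and your computation that $\mu(\overline{E}_v,D)$ has total mass $1$ is fine. But your execution diverges from the paper's in a way that creates the very obstacle you flag at the end, and there are two further gaps.

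\textbf{Where the paper differs.} The paper does \emph{not} equidistribute on $\mathbb{P}^1_{Berk}$ and then pull the limit measure back through $\mathbf{h}$. Instead it works directly on $X$: it takes the explicit sections $h^*(\Phi_{2^m})$ (pulled-back cyclotomic polynomials), equips $X$ with the adelic metric $\mathbf{h}^*\mathcal{L}$ coming from the Weil height on $\mathbb{P}^1$, and uses the Bost--Gillet--Soul\'e projection formula for \emph{heights}, $H_{\mathbf{h}^*\mathcal{L}}(Z)=H_{\mathcal{L}}(\mathbf{h}_*Z)$, to show that every Galois orbit inside $\mathrm{zer}(h^*(\Phi_{2^m}))$ has height $0$ on $X$ and that $H_{\mathbf{h}^*\mathcal{L}}(\mathcal{X})=0$. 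These orbits then form a generic small sequence on $X$, and Chambert--Loir's equidistribution theorem on $X$ yields directly the model measure in (\ref{eq:Berkmeasure}), because that measure \emph{is} by definition the Chambert--Loir measure of $\mathbf{h}^*\mathcal{L}$. So the Berkovich pullback identity $\frac{1}{N}\mathbf{h}^*\delta_{\zeta_{\mathrm{Gauss}}}=\mu(\overline{E}_v,D)$ is never needed: the projection formula is applied to heights, not to measures, and the limiting measure on $X$ falls out of Chambert--Loir's theorem rather than being computed by hand.

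\textbf{Gaps in your route.} First, Theorem~\ref{thm:caponethm} gives you abstract sections $P_n$ with $\lambda(P_n)/n\to 0$, but it tells you nothing about the heights of the individual closed points in $\mathrm{zer}(P_n)$, nor about genericity, so you cannot invoke a Bilu/Baker--Rumely/Chambert--Loir theorem without further argument. The paper sidesteps this by choosing $\Phi_{2^m}$, whose zeros are a single Galois orbit of roots of unity of Weil height $0$. Second, your avoidance step (``a small perturbation of $P_n$'') is not rigorous: you must perturb within $\mathcal{O}_K[t]$ while preserving both the slope bound and the equidistribution, and no mechanism for that is given. The paper handles avoidance cleanly by noting that the zero sets of $\Phi_{2^m}$ are pairwise disjoint as $m$ varies, so for any finite $S$ one simply takes $m$ large enough. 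Third, even granting equidistribution on $\mathbb{P}^1$, your displayed pullback identity needs the local-degree computation you mention; this is a genuine (if tractable) lemma, and it is exactly what the paper's approach is designed to avoid.

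In short: replace your abstract $P_n$ by $h^*(\Phi_{2^m})$, drop the pullback of measures in favor of the height projection formula, and apply Chambert--Loir's theorem directly on $X$. Then the avoidance clause and the identification of the limit measure with (\ref{eq:Berkmeasure}) both become automatic.
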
 

Thus under the hypotheses of Theorems \ref{thm:succmin} and \ref{thm:succminberk}, to achieve sections that demonstrate semi-stability, one can
use sections whose zeros approach the  measures
$\mu(\overline{E}_v,D)$ while avoiding any prescribed finite set of points.  
The measure in Theorem \ref{thm:succminberk}  was defined by Chambert-Loir in
\cite{CL1}, and we will use his results in the proof. 

%The hypothesis that $X = \mathbb{P}^1$
%in parts (b) and (c) when $v$ is non-archimedean is due to the fact that we will use the explicit capacity theory on
%the Berkovich projective line due to Baker and Rumely in \cite{BakerRumely}.  The work of Thullier
%in \cite{Thullier} provides an approach to removing this hypotheses, but we will not carry out this program here. %One needs some additional hypotheses in part (ii)
%for the following reason.  Suppose $X = \mathbb{P}^1$, $D$ is the point at infinity, and that for some archimedean $v$ the set $E_v$ is a disc centered at $0$ with positive radius that is transcendental over $\mathbb{Q}$. Then the measure $\mu_v(D,E_v)$ will be supported on the boundary of $E_v$, but no zero of
%an element of $\cup_{n \ge 1} H^0(X,L^{\otimes n})^{\geq \lambda}$ can lie on this boundary.

%We should also note that since $D$ need not be uniquely
%determined by (\ref{eq:capone}) for a given $\mathcal{E}$, $X$ and $\mathcal{X}$,
% the measure $\mu_v(D,E_v)$ may not be  the only
%element of $\cap_{\lambda < 0} \overline{\mathcal{M}}^{\geq \lambda}_{cont, v}$ in part (ii) of the Theorem.    

\subsection{Green's functions in Rumely's capacity theory}
\label{s:RumelyMeasure}

 Following \cite{Rumely1}, let $q_v$ be the order of the residue field of a finite place $v$ of $K$.  If $v$ is a real place, let $q_v = e$, while if $v$ is complex let $q_v = e^2$.  Define a $v$-adic 
log by $\ln_v(r) = \ln(r)/ \ln(q_v)$ for $0 < r \in \mathbb{R}$. We let $|| \ ||_v$ be the standard absolute value $| \ |_v$ if $v$ is finite, and we let
$|| \ ||_v$ be the Euclidean absolute value if $v$ is archimedean. The product formula then becomes
$$\sum_v \ln_v ||\alpha||_v \cdot \ln(q_v) = 0$$
for $\alpha \in K - \{0\}$.  

Suppose now that $\zeta \in X(\overline{K}_v) - E_v$.  
In \cite[\S 3 - \S 4]{Rumely1} Rumely defines a real valued canonical distance function $[z,w]_\zeta$ of pairs of points $z, w \in X(\overline{K}_v) - \{\zeta\}$.  He then defines a Green's function $G(z,\zeta;E_v)$
in the following way.

Suppose first that $E_v$ is compact. Rumely shows that there is a unique positive Borel measure $\mu_v = \mu_v(E_v,\zeta)$ supported on $E_v$ that minimizes the energy integral
\begin{equation}
\label{eq:energy}
V_\zeta(E_v) = - \int_{E_v \times E_v}  \ln_v [x,w]_\zeta \ \mu_v(x) \mu_v(w)
\end{equation}
One then has a conductor potential
\begin{equation}
\label{eq:conductor}
u_E(z,\zeta) = - \int_{E_v}  \ln_v [x,w]_\zeta \ \mu_v(w).
\end{equation}
This function vanishes at almost all $z \in E_v$. One lets
\begin{equation}
\label{eq:firstGdef}
G(z,\zeta;E_v) = \left \{
  \begin{tabular}{ccc}
 $V_\zeta(E_v) - u_E(z,\zeta)  $&if & $z \not \in E_v \cup \{\zeta\}$ 
  \\
   &  &  \\
   $\infty$&if&$z = \zeta$\\
   &  &  \\
  0 & if & $z \in E_v$ 
  \end{tabular}
  \right \}
\end{equation}

Suppose now that $v$ is a finite place.    A $\mathrm{PL}_\zeta$ domain
(see \cite[Def. 4.2.6]{Rumely1}) is a subset of the form
\begin{equation}
\label{eq:PLzeta}
U_v = \{z \in X(\overline{K}_v): |f(z)|_v \le 1\}
\end{equation}
for a non-constant function $f(z) \in \overline{K}_v(X)$ having poles
only at $\zeta$.  Define
\begin{equation}
\label{eq:secondGdef}
G(z,\zeta;U_v) = \left \{
  \begin{tabular}{ccc}
 $ \frac{1}{\mathrm{deg}(f(z))} \ln_v |f(z)|_v$
   &if & $z \not \in U_v \cup \{\zeta\}$ 
  \\
   &  &  \\
   $\infty$&if&$z = \zeta$\\
  0 & if & $z \in U_v$ 
  \end{tabular}
\right \}
\end{equation}
 
Suppose now that $v$ is finite and that $E_v$ is an arbitrary
algebraically capacitifiable subset of $X(\overline{K}_v) - \{\zeta\}$ in the
sense of \cite{Rumely1}.  In \cite[\S 3 - \S 4]{Rumely1}, Rumely shows that there is
there exist an infinite increasing sequence $\{E'_{v,i}\}_{i=1}^\infty$
of compact subsets of $E_v$ and an infinite decreasing sequence
$\{U_{v,j}\}_{j = 1}^\infty$ of $PL_\zeta$ domains containing $E_v$ with the property that 
\begin{equation}
\label{eq:limit}
\lim_{i \to \infty} \gamma_\zeta(E'_{v,i}) = \gamma_\zeta(E_v) = \lim_{j \to \infty} \gamma_\zeta(U_{v,j})
\end{equation}
when $\gamma_\zeta(E_v)$ is the local capacity of $E_v$ with respect to $\zeta$.  It is shown in \cite[Thm. 4.4.4]{Rumely1} that the fact that $E_v$ is algebraically capacitable implies 
\begin{equation}
\label{eq:limgtwo}
\lim_{i \to \infty} G(z,\zeta;E'_{v,i}) = \lim_{j \to \infty} G(z,\zeta;U_{v,j})
\end{equation}
except for a set $\Delta$ of $z$ of inner capacity zero contained in $E_v$,
and the left hand limit in (\ref{eq:limgtwo}) is $0$ for all $z \in E_v$.   By \cite[Prop. 4.4.1]{Rumely1}, $G(z,\zeta;E'_{v,i})$ is non-increasing with $i$, $G(z,\zeta;U_{v,j})$ is non-decreasing with $j$, $G(z,\zeta;E'_{v,i}) \ge G(z,\zeta;U_{v,j})$ for all $i$ and $j$. The convergence in (\ref{eq:limgtwo}) is
uniform over $z$ in compact subsets of $X(\overline{K}_v) - \{\zeta\} - \Delta$.  

We now define
\begin{equation}
\label{eq:thirdGdef}
G(z,\zeta;E_v) = \left \{
  \begin{tabular}{ccc}
$ \lim_{i \to \infty} G(z,\zeta;E'_{v,i})$&if & $z \ne \zeta$ 
  \\
   &  &  \\
   $\infty$&if&$z = \zeta$\\
  \end{tabular}
  \right \}
\end{equation}

Suppose now that $D = \sum_{\zeta} n_\zeta \zeta$
an effective divisor of degree $\mathrm{deg}(D) = \sum_\zeta n_\zeta > 0$.  Let 
\begin{equation}
\label{eq:Gdef}
G(z,D;E_v) = \sum_{\zeta} n_\zeta \ G(z,\zeta;E_v) 
\end{equation}
and
\begin{equation}
\label{eq:measuregeneral}
\mu_v(E'_{v,i},D) = \frac{1}{\mathrm{deg}(D)} \sum_{\zeta} n_\zeta \ \mu_v(E'_{v,i},\zeta)
\end{equation}

%\begin{lemma}
%\label{lem:limitequ} Under the hypotheses in part (ii) of Theorem \ref{thm:caponethm}, there is a 
%well defined Green's measure $\mu_v(D,E_v)$ in $\mathcal{M}_v^s$ for every place $v$ of $K$.   
%\end{lemma}

%\begin{proof} 
%{\bf Still to do:  Construct $\mu_v(D,E_v)$ by pulling back via $h_v$ the Green's measure $\mu_v(\infty,\tilde{E}_v)$ computed in \S \ref{s:examplemeasure}.}\end{proof}

\subsection{ Successive maxima for adelic sets of capacity one}
\label{s:sstable}

 The object of this section is to prove Theorem \ref{thm:caponethm}.
 We must first make a slight extension of Lemma 4.9 of \cite{CMPT}.
 
 \begin{lemma} 
 \label{lem:extendit}Suppose $\gamma(\mathcal{E},\mathcal{X}) \ge 1$. Then 
 $\gamma(\mathcal{E},\mathcal{X})$ is the infimum of $S_\gamma(\mathcal{E},D')^{1/\mathrm{deg}(D')^2}$ as $D'$ ranges over all ample effective divisors  supported on $\mathcal{X}$.
 \end{lemma}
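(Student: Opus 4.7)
The plan is to verify two matching inequalities. For the upper bound
\[
\inf_{D'}\, S_\gamma(\mathcal{E},D')^{1/\mathrm{deg}(D')^2} \le \gamma(\mathcal{E},\mathcal{X}),
\]
I would invoke Rumely's sectional capacity theorem in \cite{RumelyDuke}: for any fixed ample effective divisor $D_0$ supported on $\mathcal{X}$, the sequence $S_\gamma(\mathcal{E},nD_0)^{1/\mathrm{deg}(nD_0)^2}$ converges to $\gamma(\mathcal{E},\mathcal{X})$ as $n \to \infty$. Since each $nD_0$ is itself an ample effective divisor supported on $\mathcal{X}$, the infimum on the left is at most $\gamma(\mathcal{E},\mathcal{X})$.

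For the reverse inequality $S_\gamma(\mathcal{E},D')^{1/\mathrm{deg}(D')^2} \ge \gamma(\mathcal{E},\mathcal{X})$ for every ample effective $D'$ with support $\mathcal{X}$, the case $\gamma(\mathcal{E},\mathcal{X}) > 1$ is exactly Lemma~4.9 of \cite{CMPT}. The ``slight extension'' required is the boundary case $\gamma(\mathcal{E},\mathcal{X}) = 1$, which I would handle by approximation from outside. Specifically, I would choose a decreasing family $(\mathcal{E}^{(k)})_{k \ge 1}$ of admissible adelic sets containing $\mathcal{E}$ with $\gamma(\mathcal{E}^{(k)},\mathcal{X}) > 1$, obtained by slightly enlarging $E_v$ at one convenient place (for instance by replacing $E_v$ with a $\mathrm{PL}_\zeta$-domain containing it at a finite place, or by a compact thickening at an archimedean place). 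Applying \cite[Lem.~4.9]{CMPT} to each $\mathcal{E}^{(k)}$ gives $S_\gamma(\mathcal{E}^{(k)},D')^{1/\mathrm{deg}(D')^2} \ge \gamma(\mathcal{E}^{(k)},\mathcal{X})$, and one then passes to the limit in $k$ using the continuity of both $\gamma(\cdot,\mathcal{X})$ and $S_\gamma(\cdot,D')$ under monotone decreasing limits of admissible adelic sets, which is standard within the theory of \cite{Rumely1} and \cite{RumelyDuke}. The limit inequality reads $S_\gamma(\mathcal{E},D')^{1/\mathrm{deg}(D')^2} \ge 1 = \gamma(\mathcal{E},\mathcal{X})$, as required.

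The main obstacle will be making this approximation step fully rigorous: one has to ensure that each $\mathcal{E}^{(k)}$ still satisfies the standard hypotheses of \cite[Def.~5.1.3]{Rumely1} (algebraic capacitability, positive inner capacity with respect to each $\zeta \in \mathcal{X}$, and compactness at archimedean places), and that the joint convergences $\gamma(\mathcal{E}^{(k)},\mathcal{X}) \to 1$ and $S_\gamma(\mathcal{E}^{(k)},D') \to S_\gamma(\mathcal{E},D')$ hold as $k \to \infty$. Both points are essentially bookkeeping with the definitions recalled in Section \ref{s:RumelyMeasure}, but they are where the substantive content of the extension lies, since away from the boundary case $\gamma = 1$ the lemma reduces directly to \cite{CMPT}.
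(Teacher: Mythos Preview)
Your first step contains a genuine error. By Rumely's formula in \cite{RumelyDuke}, $-\log S_\gamma(\mathcal{E},D)$ is a quadratic form in the coefficient vector of $D$ (it equals $[K:\mathbb{Q}]$ times $\vec{n}^T\,\Gamma(\mathcal{E},\mathcal{X})\,\vec{n}$). Hence $S_\gamma(\mathcal{E},nD_0)=S_\gamma(\mathcal{E},D_0)^{n^2}$ and the quantity $S_\gamma(\mathcal{E},nD_0)^{1/\deg(nD_0)^2}$ is \emph{independent of $n$}; it does not converge to $\gamma(\mathcal{E},\mathcal{X})$ unless the probability vector attached to $D_0$ already happens to be optimal for the Green's matrix game. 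So the inequality $\inf_{D'} S_\gamma(\mathcal{E},D')^{1/\deg(D')^2}\le\gamma(\mathcal{E},\mathcal{X})$ cannot be obtained this way; it requires exhibiting divisors whose associated probability vectors approximate an optimal strategy.

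Your approximation idea for the boundary case $\gamma(\mathcal{E},\mathcal{X})=1$ is reasonable in spirit but differs from the paper and is more laborious. The paper does not enlarge the adelic set at all: it observes that the Green's matrix $\Gamma=\Gamma(\mathcal{E},\mathcal{X})$ is symmetric with non-negative off-diagonal entries and nonpositive eigenvalues, perturbs it \emph{algebraically} to $\Gamma_\epsilon=\Gamma-\epsilon I$, which is automatically negative definite, applies the matrix-level argument of \cite[Lemma~4.9]{CMPT} to $\Gamma_\epsilon$, and then passes to the limit $\epsilon\to 0$ using only compactness of the probability simplex. This avoids all the bookkeeping you flag as obstacles (standard hypotheses for the $\mathcal{E}^{(k)}$, monotone continuity of $\gamma(\cdot,\mathcal{X})$ and of $S_\gamma(\cdot,D')$), because no auxiliary adelic set is ever constructed. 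Your set-level approximation could presumably be pushed through, but it trades a two-line matrix perturbation for several nontrivial continuity statements from capacity theory, and it still leaves the ``$\inf\le\gamma$'' direction unaddressed once the $nD_0$ argument is discarded.
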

 \begin{proof}  This result is shown in \cite[Lemma 4.9]{CMPT} if $\gamma(\mathcal{E},\mathcal{X}) > 1$.  We now suppose $\gamma(\mathcal{E},\mathcal{X}) =1$, so the
 Green's matrix $\Gamma(\mathcal{E},\mathcal{X})$ has $\mathrm{val}(\Gamma(\mathcal{E},\mathcal{X})) = 0$.  By \cite[Prop. 5.1.8, Prop. 5.1.9]{Rumely1},
 $\Gamma(\mathcal{E},\mathcal{X})$ is a symmetric real matrix with non-negative off diagonal entries that has all non-positive eigenvalues and at least one eigenvalue  equal to  $0$.  Let $I$ be the identity matrix of the same size as $\Gamma(\mathcal{E},\mathcal{X})$.  For all $\epsilon > 0$, the matrix $\Gamma_\epsilon = \Gamma(\mathcal{E},\mathcal{X}) - \epsilon I$ is negative definite, symmetric and has non-negative off diagonal entries.  We now apply the arguments of \cite[Lemma 4.9]{CMPT}
 to $\Gamma_\epsilon$ and let $\epsilon \to 0$. Since the space of probability vectors
 of a prescribed size is compact, this implies Lemma \ref{lem:extendit}.
 \end{proof}

 \begin{lemma}
 \label{lem:greensup}  Let $|\ |_v$ be the Green's metric
 (\ref{eq:metric}) on $L_v$, and let $| \ |_v^{\otimes n}$
 be the resulting metric on $L_v^{\otimes n}$.  For
  $f_v \in H^0(X,L^{\otimes n})$ and $z \in X(\overline{K}_v)$
  let $|f_v|^{\otimes n}_v(z)$ be the norm with respect to $| \ |_v^{\otimes n}$ of the image of $f_v$ in the fiber
  of $L_v^{\otimes n}$ at $z$.   Regarding $f_v$ as an element of the function field
  $K_v(X)$, let $f_v(z) \in \overline{K}_v \cup \{\infty\}$ be the value of $f_v$ at $z$.
  Then
  \begin{equation}
  \label{eq:estimate}
  ||f_v||_{L^{\otimes n},v} = \mathrm{sup}_ {z \in X(\overline{K}_v)} |f_v|^{\otimes n}(z) \quad \mathrm{equals}\quad \mathrm{sup}(f_v,E_v) = \mathrm{sup}_{z \in E_v}    |f_v(z)|_v .
  \end{equation}
  \end{lemma}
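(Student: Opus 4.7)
The plan is to unwind the Green's metric definition so that (\ref{eq:estimate}) becomes an instance of the maximum principle from capacity theory. First I would observe that the canonical global section $1 \in H^0(X,L)$ has divisor $D$ and norm $|1|_v(z) = \exp(-G(z,D;E_v))$ by (\ref{eq:metric}). Any $f_v \in H^0(X,L^{\otimes n})$ can be written as $\tilde{f}_v \cdot 1^{\otimes n}$ for a unique rational function $\tilde{f}_v \in K_v(X)$ whose divisor satisfies $(\tilde{f}_v) + nD \geq 0$, and this $\tilde{f}_v$ is precisely what is meant by $f_v$ viewed in the function field. Therefore
\[
|f_v|_v^{\otimes n}(z) = |\tilde{f}_v(z)|_v \exp(-n\, G(z,D;E_v))
\]
on $X(\overline{K}_v) \setminus \mathrm{supp}(D)$, and the identity extends continuously across $\mathrm{supp}(D)$ because there $\tilde{f}_v$ has poles of order at most $n \cdot \mathrm{mult}_\zeta(D)$ which are cancelled by the logarithmic poles of $n\, G(\cdot, D; E_v)$. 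Since $G(z, D; E_v) = 0$ for $z \in E_v$ outside an inner-capacity-zero set (which cannot affect suprema of upper semicontinuous functions), on $E_v$ one has $|f_v|_v^{\otimes n}(z) = |\tilde{f}_v(z)|_v$, and the inequality $\sup_{E_v}|\tilde{f}_v|_v \le \sup_{X(\overline{K}_v)} |f_v|_v^{\otimes n}$ is then immediate.

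For the reverse inequality I would establish the key maximum principle
\[
\sup_{z \in X(\overline{K}_v)} \bigl(\ln|\tilde{f}_v(z)|_v - n\, G(z,D;E_v)\bigr) \;=\; \sup_{z\in E_v} \ln|\tilde{f}_v(z)|_v.
\]
At an archimedean place this is classical potential theory on the compact Riemann surface $X(\mathbb{C})$: the function $u(z) = \ln|\tilde{f}_v(z)|_v - n\, G(z, D; E_v)$ is subharmonic on $X(\mathbb{C}) \setminus E_v$ because $G(\cdot, D; E_v)$ is harmonic there as a conductor potential built from (\ref{eq:conductor})--(\ref{eq:firstGdef}), while the singularities of $\ln|\tilde{f}_v|_v$ at $\mathrm{supp}(D)$ are dominated by those of $n\, G$, so that $u$ is globally bounded above. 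The standard maximum principle then forces $u$ to attain its supremum on $E_v$.

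At a non-archimedean place I would first handle the case in which $E_v$ is a $\mathrm{PL}_\zeta$-domain $U_v = \{|g|_v \leq 1\}$ as in (\ref{eq:PLzeta}): formula (\ref{eq:secondGdef}) expresses $G(z, D; U_v)$ as an explicit rational multiple of $\ln|g(z)|_v$, so $\ln|\tilde{f}_v(z)|_v - n\, G(z, D; U_v)$ becomes the logarithmic absolute value of a single element of $K_v(X)$ which is regular on the complement of $U_v$, and the non-archimedean maximum modulus principle for rational functions on the associated affinoid yields the conclusion. For general algebraically capacitable $E_v$, I would approximate $E_v$ by the decreasing sequence of $\mathrm{PL}_\zeta$-domains $U_{v,j}$ of (\ref{eq:limit}), apply the above on each $U_{v,j}$, and pass to the limit using the uniform convergence in (\ref{eq:limgtwo}) on compact subsets of $X(\overline{K}_v) \setminus \mathrm{supp}(D)$. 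The main obstacle will be controlling the limiting procedure near $\mathrm{supp}(D)$, where both $\tilde{f}_v$ and $G$ blow up: the key point is that the constraint $(\tilde{f}_v) + nD \ge 0$ enforces a uniform cancellation of these poles that survives the limit, while the inner-capacity-zero exceptional sets in the construction of $G$ do not affect either side of (\ref{eq:estimate}).
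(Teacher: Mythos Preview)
Your proof is correct and follows essentially the same route as the paper: unwind the Green's metric to get $\ln|f_v|^{\otimes n}_v(z)=\ln|f_v(z)|_v-nG(z,D;E_v)$, then invoke a maximum principle on $X(\overline{K}_v)\setminus E_v$ (classical subharmonic in the archimedean case, Rumely's non-archimedean maximum modulus principle otherwise). One small point: in the non-archimedean case the paper works with RL domains defined by functions with poles anywhere in $\mathcal{X}$ and uses Rumely's characterization of $G(z,D;E_v)$ as the supremum over all such domains, rather than approximating by a sequence of $\mathrm{PL}_\zeta$-domains; this handles divisors $D$ supported at several points of $\mathcal{X}$ more directly and sidesteps having to argue that $\sup_{U_{v,j}}|f_v|_v\to\sup_{E_v}|f_v|_v$, which your sketch does not fully justify.
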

  
  \begin{proof} In view of (\ref{eq:metric} ), the Green's metric on $f_v \in H^0(X,L^{\otimes n})$ is
  specified by
  \begin{equation}
  \label{eq:metricvalue}
 \ln |f_v|_v^{\otimes n}(z) = \ln (|f_v(z)|_v) + \ln  (| 1|_v^{\otimes n}(z)) = \ln |f_v(z)|_v -  n G(z,D;E_v).
 \end{equation}
 Suppose first that $v$ is archimedean.  
 %By \cite[Prop. 4.4.13]{Rumely1}, for each $z \in X(\overline{K}_{v})$, $G(z,D;E_{v})$   is the infimum of $G(z,D;E'_{v})$
 %over compact subsets $E'_{v}$ of $E_{v}$. So we can reduce to the case in which $E_v$
 %is compact. 
 We have supposed in this case that $E_v$ is compact. Then $\ln |f_v(z)|_v - n G(z,D;E_v)$ is a well defined
 harmonic function on the open set $X(\overline{K}_v) - E_v = X(\mathbb{C}) - E_v$, so it achieves
 its maximum on the boundary of $X(\overline{K}_v) - E_v$.  This boundary lies in $E_v$
and $G(z,D;E_v) = 0$ for $z \in E_v$ by \cite[Def. 3.2.1]{Rumely1}, so (\ref{eq:estimate}) holds.  Suppose
now that $v$ is non-archimedean.   By \cite[p. 282, Def. 4.4.12]{Rumely1}, $G( z,D;E_v)$ is the supremum of $G(z,D;U_v)$ over
RL domains $U_v \supset E_v$ defined by functions having poles in $\mathcal{X}$. 
The fact that (\ref{eq:estimate}) holds is now a consequence of the formula for 
$G(z,D;U_v)$ when $U_v$ is RL-domain in (\cite[p. 277, eq. (2)]{Rumely1}) together with the maximum
modulus principle of \cite[Thm. 1.4.2]{Rumely1}.  
  \end{proof}
  
   \begin{lemma}
  \label{lem:no small}  There is no section $f \in H^0(X,L^{\otimes n})$
  that has height $\lambda(f) > 0$.
  \end{lemma}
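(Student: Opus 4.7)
The plan is to derive a contradiction by evaluating $f$ at a carefully chosen algebraic point $\alpha \in \mathcal{E}$ and applying the product formula. By Lemma \ref{lem:greensup}, $||f||_{L^{\otimes n},v} = \sup(f, E_v)$ for every place $v$, so $\lambda(f) > 0$ amounts to $\prod_v \sup(f, E_v)^{\delta_v} < 1$, and this is what I must rule out.

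The crucial step is to produce an algebraic point $\alpha \in X(\overline{K}) - \mathcal{X}$ whose full $\mathrm{Gal}(\overline{K}/K)$-orbit $\{\alpha_1, \ldots, \alpha_d\}$ is contained in $\mathcal{E}$ (meaning every conjugate belongs to $E_v$ at every place $v$), with $f(\alpha_i) \neq 0$ and $G(\alpha_i, D; E_v) = 0$ for every $i$ and $v$. Since Rumely's Fekete--Szeg\H{o} theorem (\cite[Ch.~6]{Rumely1}) as usually stated requires the strict inequality $\gamma(\mathcal{E},\mathcal{X}) < 1$, I first shrink $\mathcal{E}$ to a Galois-stable compact adelic subset $\mathcal{E}' = \prod_v E'_v \subset \mathcal{E}$ with $\gamma(\mathcal{E}', \mathcal{X}) < 1$, a maneuver available thanks to the standing positive inner capacity hypothesis. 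The strict Fekete--Szeg\H{o} theorem then supplies infinitely many algebraic points with Galois orbit in $\mathcal{E}' \subset \mathcal{E}$. Because $f$ has only finitely many zeros, and because the ``irregular'' subset of each $E_v$ on which $G(\cdot, D; E_v)$ may fail to vanish has local capacity zero (and is nonempty only at finitely many $v$), a valid $\alpha$ can be chosen.

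With such $\alpha$ in hand, the product formula for $f(\alpha) \in K(\alpha)^\times$, unwound as a weighted sum over places of $K$, gives $\sum_v \delta_v \sum_{i=1}^d \log |f(\alpha_i)|_v = 0$. By the definition (\ref{eq:metric}) of the Green's metric, $|f|_v^{\otimes n}(\alpha_i) = |f(\alpha_i)|_v \exp(-n\, G(\alpha_i, D; E_v)) = |f(\alpha_i)|_v$. Using $|f|_v^{\otimes n}(\alpha_i) \leq ||f||_{L^{\otimes n},v}$ and summing then yields
$$0 = \sum_v \delta_v \sum_{i=1}^d \log |f|_v^{\otimes n}(\alpha_i) \leq d\sum_v \delta_v \log ||f||_{L^{\otimes n},v} = -d\, \lambda(f),$$
forcing $\lambda(f) \leq 0$, a contradiction. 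The main obstacle is the existence of $\alpha$ in the borderline case $\gamma = 1$: one must justify the shrinking step to reach the strict Fekete--Szeg\H{o} regime while preserving the standard hypotheses, algebraic capacitability, and Galois stability of the shrunken set, and one must carefully avoid the capacity-zero exceptional loci of the local Green's functions.
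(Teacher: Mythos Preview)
Your argument has a genuine gap in the crucial step: the direction of the Fekete--Szeg\H{o} dichotomy is inverted. When you shrink $\mathcal{E}$ to a compact adelic subset $\mathcal{E}'$ with $\gamma(\mathcal{E}',\mathcal{X}) < 1$, Rumely's Fekete theorem \cite[Ch.~6]{Rumely1} says there are only \emph{finitely many} (possibly zero) algebraic points whose Galois orbit lies in $\mathcal{E}'$, not infinitely many. The Szeg\H{o} direction, which produces infinitely many such points, requires $\gamma > 1$. Since the standing hypothesis is $\gamma(\mathcal{E},\mathcal{X}) = 1$, shrinking moves you into the Fekete regime, where the existence of even a single $\alpha$ with Galois orbit in $\mathcal{E}'$ is not guaranteed. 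Enlarging $\mathcal{E}$ instead would give infinitely many points in a neighborhood, but then the conjugates $\alpha_i$ may fall outside $E_v$, so $G(\alpha_i,D;E_v)$ can be strictly positive and the product-formula estimate acquires an error term $n\sum_v \delta_v \sum_i G(\alpha_i,D;E_v)$ that does not obviously vanish.

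The paper avoids this borderline issue entirely. Instead of producing an algebraic point in $\mathcal{E}$, it uses the section $f$ itself as a morphism $X \to \mathbb{P}^1$ with polar divisor $D'$ supported on $\mathcal{X}$, sets $r_v = \sup(f,E_v)$, and compares $\mathcal{E}$ to the pullback $f^{-1}(\mathcal{E}')$ of the adelic polydisc $\mathcal{E}'$ of radii $r_v$. Rumely's pullback formula for sectional capacity then gives $S_\gamma(\mathcal{E},D') \le S_\gamma(\mathcal{E}',\infty)^{\deg(D')}$, and the explicit polydisc computation identifies $-\ln S_\gamma(\mathcal{E}',\infty)$ with $\lambda(f)$. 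Thus $\lambda(f) > 0$ forces $S_\gamma(\mathcal{E},D')^{1/\deg(D')^2} < 1$, contradicting Lemma~\ref{lem:extendit} and the hypothesis \eqref{eq:capone}. This route sidesteps the question of whether algebraic points in $\mathcal{E}$ exist at all in the capacity-one case.
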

  
  \begin{proof}  Suppose $f \in H^0(X,L^{\otimes n})$
 is a section with $\lambda(f) = - \sum_v k_v \log || f ||_{L^{\otimes n},v} > 0$.
 Then $f$ defines a morphism $X \to \mathbb{P}^1$ such that
 $f^{-1}(\infty) = D'$ is supported on $\mathcal{X}$, since $f$
 is a section of $L^{\otimes n} = \mathcal{O}_X(n D)$ and $D$ is supported on
 $\mathcal{X}$.   Write $r_v  = \mathrm{sup}(f,E_v)$.
 We let $\mathcal{E}' = \prod_v E'_v$ be the adelic polydisc of the projective line $\mathbb{P}^1$
 such that each $E'_v \subset A^1(\overline{K}_v) = P^1(\overline{K}) - \{\infty\}$
 is the disc around the origin with radius $r_v > 0$ with respect to $| \ |_v$.  By the definition of
 the $r_v$, we have $\mathcal{E} \subset f^{-1}( \mathcal{E}')$.
This and Rumely's pullback formula \cite[Prop. 4.1]{RumelyDuke} give 
 \begin{equation}
 \label{eq:slist}
 S_\gamma(\mathcal{E},D') \le S_\gamma(f^{-1}(\mathcal{E}'),D') = S_\gamma(\mathcal{E}',\infty)^{\mathrm{deg}(D')}.
 \end{equation}
 By Lemma \ref{lem:greensup}, 
 $\lambda(f) = - \sum_v k_v \log || f ||_{L^{\otimes n},v}  = - \sum_v [K_v:\mathbb{Q}_p] \ln(r_v)$
 and this is $- \ln(S_\gamma(\mathcal{E}',\infty))$ 
 by \cite[Prop. 3.1]{RumelyDuke} and \cite[p. 339]{Rumely1}.  Because $\lambda(f) > 0$ we conclude from (\ref{eq:slist}) that $S_\gamma(\mathcal{E},D') < 1$.  
Hence $S_\gamma(\mathcal{E},D')^{1/\mathrm{deg}(D')^2} < 1$.
This contradicts the hypothesis in (\ref{eq:capone}) because of Lemma \ref{lem:extendit}.
 \end{proof}

\begin{lemma}
\label{lem:shrink}  Suppose $\epsilon > 0$.  There is a finite place $v_0$ of $K$ and a subset $E'_{v_0}$ of $E_{v_0}$ with the following properties:
\begin{enumerate}
\item[1.] $E'_{v_0}$ is capacitifiable with respect to $D$, and  
$|G(z,D;E'_{v_0}) - G(z,D;E_{v_0})| < \epsilon$ for all $z \in X(\mathbb{C}_v)$.
\item[2.] The set $\mathcal{E}' = E'_{v_0} \times (\prod_{v \ne v_0} E_v)$ has capacity
$S_\gamma(\mathcal{E}',D) < S_\gamma(\mathcal{E},D) = 1$.
\item[3.] Let $\lambda(s) = - \sum_v k_v \log || s ||_{L^{\otimes n},v}$ be the height of a section
$s \in H^0(X,L^{\otimes n})$ associated to the Green's metrics for $\mathcal{E}$,
and let $\lambda'(s)$ is the corresponding height for $\mathcal{E}'$. Then
$|\lambda(s) - \lambda'(s)|/n < k_{v_0} \epsilon$.
\item[4.] There is a rational function $f \in K(X)$ whose divisor of poles is a positive integral
multiple of $D$ with the following properties: we have $\mathrm{sup}(f,E_v) \le 1$ for all finite $v \ne v_0$,
$\mathrm{sup}(f,E'_{v_0}) \le 1$ and $\mathrm{sup}(f,E_v) < 1$ for all archimedean $v$.
\end{enumerate}
\end{lemma}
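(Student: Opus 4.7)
The plan is to shrink $E_{v_0}$ slightly at a single well-chosen finite place $v_0$, in such a way that the Green's function at $v_0$ changes by at most $\epsilon$ yet the sectional capacity drops strictly below $1$.  Items (1)--(3) will then follow directly from the approximation properties of the Green's function, and item (4) will be obtained by applying Rumely's Fekete-type theorem to the shrunken adelic set.

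First I would fix any finite place $v_0$ of $K$.  Since $E_{v_0}$ is algebraically capacitifiable with respect to each $\zeta \in \mathcal{X}$, the approximations \eqref{eq:limit}--\eqref{eq:limgtwo} provide an increasing family of compact capacitifiable subsets $\{E'_{v_0,i}\}_i$ of $E_{v_0}$ with $G(z,\zeta;E'_{v_0,i}) \to G(z,\zeta;E_{v_0})$, and by taking the linear combinations with the multiplicities of $D$ one gets $G(z,D;E'_{v_0,i}) \to G(z,D;E_{v_0})$ on the complement of $\mathrm{supp}(D)$.  Choosing $i$ large enough to make this difference less than $\epsilon$ uniformly and setting $E'_{v_0} = E'_{v_0,i}$ secures (1).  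Furthermore, because $\gamma_\zeta(E'_{v_0,i}) < \gamma_\zeta(E_{v_0})$ for $i$ sufficiently large and the sectional capacity depends strictly monotonically on each entry of the Green's matrix at $v_0$ (in the sense of \cite{Rumely1}), the adelic product $\mathcal{E}' = E'_{v_0} \times \prod_{v \ne v_0} E_v$ satisfies
$$
S_\gamma(\mathcal{E}',D) < S_\gamma(\mathcal{E},D) = 1,
$$
which is (2).  If a given choice of $i$ fails to give strict inequality, I would replace it by a larger one, using that the approach to $1$ along the family $\{\mathcal{E}'_i\}_i$ is strict at every finite stage.

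For (3), Lemma~\ref{lem:greensup} together with \eqref{eq:metricvalue} gives
$$
\log \|s\|_{L^{\otimes n},v} = \sup_{z \in X(\overline{K}_v)}\bigl(\log|s(z)|_v - n\,G(z,D;E_v)\bigr),
$$
so at the unique modified place $v_0$ the bound $|G(z,D;E'_{v_0}) - G(z,D;E_{v_0})| < \epsilon$ changes $\log \|s\|_{L^{\otimes n},v_0}$ by at most $n\epsilon$; at every other place the metric is unchanged.  Weighting by $k_{v_0}$ yields $|\lambda(s) - \lambda'(s)|/n \le k_{v_0}\epsilon$, as required.

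Finally, (4) follows from Rumely's arithmetic Fekete--Szeg\H{o} theorem (the Fekete direction), applied to $\mathcal{E}'$: since $S_\gamma(\mathcal{E}',D) < 1$, there exist a positive integer $m$ and a rational function $f \in K(X)$ whose divisor of poles equals $mD$ such that $\sup(f,E_v) \le 1$ at every non-archimedean $v$ (which covers both $\sup(f,E_v) \le 1$ for finite $v \ne v_0$ and $\sup(f,E'_{v_0}) \le 1$) and $\sup(f,E_v) < 1$ at every archimedean $v$; see \cite{Rumely1}.  The main obstacle in the argument is step (2): because $S_\gamma(\mathcal{E},D)$ sits on the critical value $1$, some care is needed to verify that shrinking at a single finite place actually drives the capacity strictly below $1$, rather than merely approaching it; this is where the strict monotonicity of Rumely's Green's matrix formalism is essential.
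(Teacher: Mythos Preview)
Your overall plan matches the paper's: shrink $E_{v_0}$ to a compact subset at a single finite place, deduce (1)--(3) from Green's function approximation, and obtain (4) from Rumely's Fekete--Szeg\H{o} construction.  However, there is a genuine gap at the point you yourself flag as the main obstacle.

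You write ``fix any finite place $v_0$'' and then assert that for the compact approximants $E'_{v_0,i}$ one has $\gamma_\zeta(E'_{v_0,i}) < \gamma_\zeta(E_{v_0})$ at every finite stage, so that ``if a given choice of $i$ fails to give strict inequality, I would replace it by a larger one.''  This is not justified for an arbitrary finite place.  If $E_{v_0}$ is itself compact (which is allowed at the finitely many exceptional places in the standard hypotheses), the approximating family may stabilize at $E_{v_0}$ and no strict shrinking occurs.  More generally, nothing in the approximation \eqref{eq:limit}--\eqref{eq:limgtwo} forces strict inequality of the Green's matrix entries; the limit could be attained.  The paper resolves this by choosing $v_0$ to be a place where $E_{v_0}$ is $\mathcal{X}$-\emph{trivial} (such places exist by the standard hypotheses).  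At an $\mathcal{X}$-trivial place the explicit computations in \cite[\S 5.2.B]{Rumely1} give the strict inequality $G(z,\zeta;E_{v_0}) < G(z,\zeta;E'_{v_0})$ for every proper compact subset and every $z \notin E_{v_0}$, so that $\Gamma(\mathcal{X},\mathcal{E}') - \Gamma(\mathcal{X},\mathcal{E})$ has all entries strictly positive.  This is what drives the game value strictly up and hence $S_\gamma(\mathcal{E}',D)$ strictly below $1$.

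A second, smaller point concerns (4).  You invoke Fekete--Szeg\H{o} for $\mathcal{E}'$ to produce $f$ with pole divisor $mD$.  The paper is more explicit here: it uses that the probability vector $P=(n_x/\deg(D))_x$ coming from $D$ was optimal for $\Gamma(\mathcal{X},\mathcal{E})$ with value $0$, and since $\Gamma(\mathcal{X},\mathcal{E}')-\Gamma(\mathcal{X},\mathcal{E})$ has all positive entries, playing this same $P$ against $\Gamma(\mathcal{X},\mathcal{E}')$ gives strictly positive value.  This is the precise input to Rumely's construction \cite[\S 6, Cor.~6.2.7]{Rumely1} that guarantees the pole divisor of $f$ is a multiple of $D$ specifically, not merely supported on $\mathcal{X}$.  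Your citation is to the right result, but the reason the output has the required pole structure is tied to the positivity of \emph{all} entries of the difference matrix, which again comes from the $\mathcal{X}$-trivial choice of $v_0$.
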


\begin{proof}   Choose a place $v_0$ where $E_{v_0}$ is $\mathcal{X}$-trivial in the sense of 
\cite[Def. 5.1.1]{Rumely1}
 By \cite[Prop. 4.4.13]{Rumely1}, the Green's function $G(z,\zeta;E_{v_0})$ for any $z \in X(\overline{K}_{v_0})$
 and any $\zeta \in X(\overline{K}_{v_0})  - E_{v_0}$ is the infimum of $G(z,\zeta;E'_{v_0})$
 over compact subsets $E'_{v_0}$ of $E_{v_0}$.  Furthermore, we have $G(z,\zeta;E_{v_0}) < G(z,\zeta;E'_{v_0})$ for $z \in X(\overline{K}_{v_0})  - E_{v_0}$ by
 the computations in \cite[\S 5.2.B]{Rumely1} since we took $E_{v_0}$ to be $\mathcal{X}$-trivial.
 So we can take $E'_{v_0}$ to be a compact subset of $E_{v_0}$ such that
 the global Green's matrix $\Gamma(\mathcal{X},\mathcal{E})$ defined in \cite[Theorem 5.1.4]{Rumely1} differs from $\Gamma(\mathcal{X},\mathcal{E}')$ by a matrix with positive entries that are arbitrarily close to $0$.  Then $E'_{v_0}$
 is capacitifiable by \cite[Theorem 4.3.4]{Rumely1}, so (1) holds.  The value
 of the game defined by $\Gamma(\mathcal{X},\mathcal{E}')$ is  larger than
 that defined by $\Gamma(\mathcal{X},\mathcal{E})$, so we get (2);  see \cite[p. 327-328]{Rumely1}. The log of
 the Green's metric on $L$ at $v_0$ associated with $E'_{v_0}$ and with $E_{v_0}$ differs
 by a constant we can make arbitrarily close to $0$, so we get (3) from (\ref{eq:metric}).
To prove  (4), we first note that hypothesis 
  (\ref{eq:capone}) in Theorem \ref{thm:caponethm} implies the following.  When we write $D = \sum_{x \in \mathcal{X}} n_x x$,
  then $n_x > 0$ for all $x$ and the probability vector $P = (n_x/\mathrm{deg}(D))_{x \in \mathcal{X}}$
  must define  an optimum strategy for the game associated to $\Gamma(\mathcal{X},\mathcal{E})$.  
  Furthermore $S_\gamma(\mathcal{E},D) = 0$ says that this optimum strategy achieves
  value $0$. Since $\Gamma(\mathcal{X},\mathcal{E}') - \Gamma(\mathcal{X},\mathcal{E})$
  has all positive entries, playing $P$ in the game defined by $\Gamma(\mathcal{X},\mathcal{E}') $
  leads to a  positive value.  This means that the construction of Rumely in \cite[\S 6, Corollary 6.2.7]{Rumely1}
  produces a function with the properties in (4).    
\end{proof}

\begin{lemma} 
\label{lem:reduce} Let $\mathcal{E}'$ be as in Lemma \ref{lem:shrink}.  There is a constant $c$
independent of $n$ such $H^0(X,L^{\otimes n})$ has a basis of sections $s$ for which $\lambda'(s) \ge c$.
\end{lemma}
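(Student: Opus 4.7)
The plan is to exploit the function $f \in K(X)$ supplied by Lemma \ref{lem:shrink}(iv), whose polar divisor is $mD$ for some positive integer $m$ and whose zero divisor $Z$ is an effective divisor of degree $md$ with $\mathrm{supp}(Z) \cap \mathrm{supp}(D) = \emptyset$ (since $D$ is precisely the polar divisor of $f$). By Lemma \ref{lem:greensup} combined with the strict archimedean inequalities $\sup(f, E_v) < 1$, one has $c_0 := \lambda'(f) > 0$. For each $n$, I will build a basis of $V_n := H^0(X, L^{\otimes n})$ adapted to the filtration
$$V_n \supset V_n(-Z) \supset V_n(-2Z) \supset \cdots,$$
where $V_n(-kZ) = H^0(X, \mathcal{O}(nD - kZ))$. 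Multiplication by $f^k$ yields an isomorphism $V_{n-km} \cong V_n(-kZ)$, so that each successive quotient $Q_k := V_n(-kZ)/V_n(-(k+1)Z)$ is identified (after multiplication by $f^{-k}$) with $V_{n-km}/V_{n-km}(-Z)$.

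The key observation is that $V_{n'}/V_{n'}(-Z)$ stabilizes, for $n' \ge n_1$ with $n_1$ large enough that $H^1(X, \mathcal{O}((n' - m)D))$ vanishes, to the fixed $md$-dimensional $K$-vector space $H^0(Z, \mathcal{O}_Z) = \bigoplus_{p \in \mathrm{supp}(Z)} \mathcal{O}_{Z, p}$, via evaluation of rational functions along $Z$; the disjointness of $Z$ and $D$ is used to trivialize $\mathcal{O}(n'D)|_Z$ as $\mathcal{O}_Z$. I will fix once and for all rational functions $b_1, \ldots, b_{md} \in V_{n_1}$ lifting a $K$-basis of this fixed vector space. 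Because the inclusion $V_{n_1} \hookrightarrow V_{n'}$ commutes with evaluation along $Z$, the same $b_i$ continue to lift a basis of $V_{n'}/V_{n'}(-Z)$ for every $n' \ge n_1$; moreover, Lemma \ref{lem:greensup} ensures that each height $\lambda'(b_i)$ depends only on $b_i$ as a rational function (and not on the particular graded piece in which it is viewed), so $C_0 := \min_i \lambda'(b_i)$ is a constant independent of $n'$.

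For $n \ge n_1$ and each $0 \le k \le \lfloor (n - n_1)/m \rfloor$, the sections $f^k b_i \in V_n(-kZ)$ then lift a basis of $Q_k$, and by the submultiplicativity $\sup(st, E'_v) \le \sup(s, E'_v) \sup(t, E'_v)$ together with Lemma \ref{lem:greensup} one obtains
$$\lambda'(f^k b_i) \ge k c_0 + \lambda'(b_i) \ge C_0.$$
For the finitely many remaining ``tail'' layers, corresponding to $k$ with $n - km \in \{0, 1, \ldots, n_1 - 1\}$, I will once and for all fix a lifted basis of each of the finitely many quotients $V_{n'}/V_{n'}(-Z)$ for $n' < n_1$; the minimum height among these finitely many chosen lifts is a universal constant $-C_1$, and multiplication by $f^k$ only improves each such height by $k c_0 \ge 0$. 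Concatenating over all layers yields a $K$-basis of $V_n$ every element of which has height at least $c := \min(C_0, -C_1)$, independent of $n$. The main technical hurdle is verifying the canonical stabilization of $V_{n'}/V_{n'}(-Z)$ to the fixed space $\bigoplus_p \mathcal{O}_{Z,p}$, but this is a standard Riemann--Roch computation once one exploits the disjointness $\mathrm{supp}(Z) \cap \mathrm{supp}(D) = \emptyset$ inherited from $D$ being the polar divisor of $f$.
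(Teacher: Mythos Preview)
Your proof is correct and follows essentially the same approach as the paper's, only more explicitly. Both arguments rest on $\sup(f,E'_v)\le 1$ at every place (so multiplying by powers of $f$ cannot decrease $\lambda'$) together with Lemma~\ref{lem:greensup} (so $\lambda'$ depends only on the underlying rational function); the paper simply asserts via Riemann--Roch that finitely many $h_j$ exist with $\{h_j f^i\}_{j,i}$ containing a basis of every $H^0(X,L^{\otimes n})$, whereas you manufacture those $h_j$ concretely through the filtration by vanishing along $Z=\mathrm{div}_0(f)$ and the stabilization $V_{n'}/V_{n'}(-Z)\cong H^0(Z,\mathcal{O}_Z)$.
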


\begin{proof} Let $f$ in part (4) of Lemma \ref{lem:shrink} have divisor $mD$ for some $m > 0$.
By Riemann-Roch, we can find a finite subset $\{h_j\}_{j \in J}$ of elements of the function
field $K(X)$ with the following properties. The poles of the $h_j$ are supported on $\mathcal{X} = \mathrm{supp}(D)$,
and the height $\lambda(h_j)$ of $h_j$ with respect to the Green's metrics associated to 
$\mathcal{E}'$ is finite.  Further, for all $n$, the collection of functions $\{h_j f^i\}_{j \in J, 0 \le i }$ contains a basis for $H^0(X,L^{\otimes n}) = H^0(X,\mathcal{O}_X(nD))$.   Now Lemma \ref{lem:greensup}  gives 
$$\lambda'(h_j f^i) = - \sum_v k_v \ln(\mathrm{sup}(h_j f^i,E'_v)) \ge - \sum_v k_v \ln( \mathrm{sup}(h_j,E'_v)) = \lambda'(h_j)$$
because $\mathrm{sup}(f^i,E'_v) \le 1$ for all $i$ by Lemma \ref{lem:shrink}.  Since there
are finitely many $h_j$, this proves the Lemma.
\end{proof}

\begin{remark}
Lemma \ref{lem:reduce} could be deduced from a result of Zhang in \cite[Thm. 4.2]{Zhang1} about arithmetic ampleness by verifying that
the capacity theoretic metrics involved satisfy the hypotheses of this result.  
\end{remark}

\medbreak
\noindent {\bf Proof of Theorem \ref{thm:caponethm}}
\medbreak
Let $c$ be as in Lemma \ref{lem:reduce}. In view of Lemmas \ref{lem:reduce}
and \ref{lem:shrink}, for each $\epsilon > 0$, there is a basis of sections $s$ of $H^0(X,L^{\otimes n})$
such that when $\lambda(s)$ is the height function associated to 
the Green's metrics coming from $\mathcal{E}$, we have $\lambda(s)/n \ge c/n - \epsilon$.
Letting $n \to \infty$ shows that the limiting measure $\nu$ associated to the
ratios $\lambda_i/n$ as $\lambda_i$ ranges over the successive maxima
of $H^0(X,L^{\otimes n})$ can have no support on the negative real axis.  On
the other hand,  Lemma \ref{lem:no small} shows the support is also trivial
on the positive real axis.  So $\nu$ must be the Dirac measure supported at $0$.
\medbreak

\subsection{Measures associated to zeros of small sections}
\label{s:smallzeros}

The object of this subsection is to prove Theorems \ref{thm:succmin} and \ref{thm:succminberk}.
Accordingly we suppose  
there is a morphism $h:X \to \mathbb{P}^1$ such that $D= h^*(\infty) = N \zeta$ for some point $\zeta \in X(K)$,
where $N = \mathrm{deg}(h)$.  We also suppose $$E_v = \{z \in X(\overline{K}_v): 
|h(z)|_v \le 1\} = h^{-1}(B_v)$$ for all $v$ when $$B_v = \{z \in \overline{K}_v: |z|_v \le 1\} \subset \mathbb{P}^1(\overline{K}_v) - \{\infty\}.$$  
  Here $h^* \mathcal{O}_{\mathbb{P}^1}(\infty) = \mathcal{O}_X(D)$
and for all $n > 0$ we have a global section $\Phi_n(z)$ in $H^0(\mathbb{P}^1, \mathcal{O}_{\mathbb{P}^1}(n \infty))$ for any integer $a$ when $\Phi_n(z)$ is the $n^{\mathrm{th}}$ cyclotomic polynomial.  The set $\mathrm{zer}(\Phi_n(z))$ of zeros of $\Phi_n(z)$ is just the set of all primitive $n^{\mathrm{th}}$
roots of unity.  

We now fix a place $v$ of $K$.  Define $\overline{B}_v = B_v$ and $\overline{E}_v = E_v$ if $v$ is archimedean. If $v$ 
is non-archimedean, we  let $\overline{B}_v$
be the closure of $B_v$ in $\mathbb{P}^1_{Berk}$ and we let $\overline{E}_v$ be the closure of $E_v$
in $X_{Berk,\mathbb{C}_v}$.  

Suppose first that $v$ is archimedean.  In Theorem \ref{thm:succmin} 
we let $\mu_0$ be the uniform measure on the boundary of the unit
disk $B_v = \{z \in \mathbb{P}^1(\overline{K}_v) = \mathbb{P}^1(\mathbb{C}): |z|_v = 1\}$,
and we defined $\mu(E_v,D)$ to be $\frac{1}{N} h^{-1}(\mu_0)$, where 
$D = N\zeta$ is the polar divisor of $h$.  By \cite[Prop. 4.1.25]{Rumely1}, $\mu(E_v,D)$
is the equilibrium measure of $E_v$ with respect to $D$.

Suppose now that  $v$ is non-archimedean. The probability measure $\mu(E_v,D)$ 
on $X_{Berk,\mathbb{C}_v}$ described in Theorem  \ref{thm:succminberk} is well defined by \cite[\S 2.3]{CL1} and  the paragraph following \cite[Theorem 3.1]{CL1}.

We claim that for all $v$, 
\begin{equation}
\label{eq:limanswer}
\mu(\overline{E}_v,D) = \lim_{m \to \infty} \delta_{\mathrm{zer}(h^*(\Phi_{2^m}(z))}
\end{equation}
where $h^*(\Phi_{2^m}(z))$ is a section of $H^0(X,h^* \mathcal{O}_{\mathbb{P}^1}(2^m\ \infty)) = H^0(X,\mathcal{O}_X(nD))$
and $\delta_{\mathrm{zer}(h^*(\Phi_{2^m}(z))}$ is the Dirac measure associated to the zeros of this section.

Suppose first that $v$ is archimedean and that $h:X \to \mathbb{P}^1$ is the identity map.  The zeros of $h^*(\Phi_{2^m}(z))$ are simply all the odd powers of a primitive root of unity of order $2^m$.  Then (\ref{eq:limanswer}) is clear
from the fact that in this case, $\overline{E}_v = \overline{B}_v$ is the unit disc about the origin, so
$\mu(\overline{E}_v,D)$ is the uniform measure on the boundary of the unit disc.  For archimedean
$v$, the case of all $h:X \to \mathbb{P}^1$ satisfying our hypotheses follows from this and the the fact that $\mu(E_v,D) = \frac{1}{N} h^{-1}(\mu_0)$.
 
 Suppose now that $v$ is non-archimedean.  As in Theorem \ref{thm:succminberk} let ${\bf h}:\mathcal{X} \to \mathbb{P}^1_{\mathcal{O}_K}$ be the minimal regular model of the finite morphism $h:X \to \mathbb{P}^1_K$. 
 We give the line bundle $\mathcal{O}_{\mathbb{P}^1_{\mathcal{O}_K}}(\overline{\infty}) = \mathcal{L}$ on $\mathbb{P}^1_{\mathcal{O}_K}$ the adelic metric associated to the Weil height.  Then $\mathbb{P}^1_{\mathcal{O}_K}$ and the divisors defined by the zeros of $\Phi_{2^m}(z)$ have height equal to $0$.  We give ${\bf h}^*\mathcal{L}$ the pull back of the adelic metric of $\mathcal{L}$.  For any cycle $Z$ on $\mathcal{X}$ we have from
 \cite[Prop. 3.2.1]{BostGS} that
 \begin{equation}
 \label{eq:BGSrule}
 H_{{\bf h}^*\mathcal{L}}(Z) = H_{\mathcal{L}}({\bf h}_*Z)
 \end{equation}
 where $H_{\mathcal{L}}$ here is the height before normalization that is defined in \cite[\S 3.1.1]{BostGS}.  If 
 $Z$ is the cycle $\mathcal{X} = {\bf h}^* \mathbb{P}^1$ we have ${\bf h}_* {\bf h}^*\mathbb{P}^1 = N \cdot \mathbb{P}^1$ by the projection formula so we conclude $H_{{\bf h}^*\mathcal{L}}(X) = 0$  Suppose now that
 $Z$ is a cycle contained in the divisor of zeros of $h^*(\Phi_{2^m}(z))$. Then ${\bf h}_*Z $ is contained in the divisor of zeros of $\Phi_{2^m}(z)$, and so $H_{\mathcal{L}}({\bf h}_* Z)) = 0$.  Thus $H_{{\bf h}^*\mathcal{L}}(Z) = 0$
 by (\ref{eq:BGSrule}).  By \cite[\S 3.1.4]{BostGS}, the same is now true if we replace $Z$ by 
 by any cycle contained in the base change of $Z$ by a morphism $\mathrm{Spec}(\mathcal{O}_{K'}) \to \mathrm{Spec}(\mathcal{O}_K)$ associated to a finite extension $K'$ of $K$. 
 We conclude that the Galois conjugates of any zero of $h^*(\Phi_{2^m}(z))$ have adelic height $0$ with respect
 to the above adelic metric on ${\bf h}^*(\mathcal{L})$, and this is also the height of $\mathcal{X}$ with respect to 
 this metric.  So these zeros as $m \to \infty$ form a generic sequence of points of $X(\overline{K}_v)$ 
 in the sense of \cite[Thm. 3.1]{CL1}.  Now \cite[Thm. 3.1]{CL1} shows that the limit on 
 the right hand side of (\ref{eq:limanswer}) equals the Berkovich measure described in
 just before \cite[Example 3.2]{CL1}, and this equals the measure $\mu(\overline{E}_v,D)$
 defined in Theorem  \ref{thm:succminberk}.  We have now shown (\ref{eq:limanswer}) in all cases.
 
Consider the normalized height $\lambda(h^*(\Phi_{2^m}(z)))$ of $h^*(\Phi_{2^m}(z))$  with respect to the Green's metrics on $\mathcal{O}_X(2^mD) = h^* \mathcal{O}_{\mathbb{P}^1}(2^m \infty)$ associated to $\mathcal{E} = \prod_v E_v$.  We have $\lambda(h^*(\Phi_{2^m}(z))) \to 0$ as $m \to \infty$ because  $\Phi_{2^m}(z) = (z^{2^m} - 1)/(z^{2^{m-1}} - 1)$ has normalized height tending toward $0$ with respect to the Green's metrics on $\mathcal{O}_{P^1}(\infty)$ which are associated to $\mathcal{B} = \prod_w B_w$.   Since the zero sets of the $\Phi_{2^m}(z)$ are disjoint for different $m$, 
for sufficiently large $m$ the zeros of   $h^*(\Phi_{2^m}(z))$ will avoid any prescribed finite subset of $X(\overline{K}_v)$.
Hence the limit measure $\mu(\overline{E}_v,D)$ in (\ref{eq:limanswer}) has mass $0$ at every point of $X(\overline{K}_v)$, so
Theorem \ref{thm:Serrethm} shows $\mu(\overline{E}_v,D)$  lies in $\cap_{\lambda < 0} M_v(T(L,\lambda),\infty)$.  
This completes the proof of 
Theorems \ref{thm:succmin} and \ref{thm:succminberk}.

\end{document}